\newcommand*\circled[1]{\tikz[baseline=(char.base)]{
            \node[shape=circle,draw,inner sep=4pt] (char) {#1};}}
\theoremstyle{definition}
\newtheorem{definition}{Definition}
\newtheorem{example}[definition]{Example}
\newtheorem{remark}[definition]{Remark}
\theoremstyle{plain}
\newtheorem{lemma}[definition]{Lemma}
\newtheorem{theorem}[definition]{Theorem}
\newtheorem{corollary}[definition]{Corollary}
\newcommand\A{{\mathbf A}}
\newcommand\B{{\mathbf B}}
\newcommand\C{{\mathbf C}}
\newcommand\Fm{\mathbf{Fm}}
\newcommand\CL{\ensuremath{\mathrm{CL}}\xspace}
\newcommand\PWK{\ensuremath{\mathrm{PWK}}\xspace}
\bmdefine{\boldstar}{\mathchoice{\textstyle*}{\textstyle*}{\textstyle*}{\scriptstyle*}}
\newcommand\Modstar{\mathsf{Mod}^{\boldstar}}
\newcommand{\ModS}{\mathsf{Mod}^{\textup{Su}}}
\newcommand\Alg[1]{\if#1*\operatorname{\mathsf{Alg}*}\else\operatorname{\mathsf{Alg}}#1\fi}
\newcommand\Mod[1]{\if#1*\operatorname{\mathsf{Mod}*}\else\operatorname{\mathsf{Mod}}#1\fi}
\newcommand{\VVV}{\mathbb{V}}                     
\newcommand{\PPP}{\mathbb{P}}
\newcommand{\PSD}{\mathbb{P}_{\!\textsc{sd}}^{}}
\newcommand{\SSS}{\mathbb{S}}
\newcommand{\III}{\mathbb{I}}
\bmdefine{\Leibniz}{\Omega}        
\bmdefine{\frege}{\Lambda}         
\newcommand{\tarskidsp}{\mathord%
   {\m@th\raisebox{0pt}[0pt][0pt]{$\stackrel%
   {\raisebox{-2.7pt}[0ex][0pt]{$\displaystyle \,\?\thicksim$}}%
   {\displaystyle\Leibniz}$}}}
\newcommand{\tarskitxt}{\mathord%
   {\m@th\raisebox{0pt}[0pt][0pt]{$\stackrel%
   {\raisebox{-2.7pt}[0ex][0pt]{$\,\?\thicksim$}}{\displaystyle\Leibniz}$}}}
\newcommand{\tarskiscr}{\mathord%
   {{\m@th\raisebox{0pt}[0pt][0pt]{$\stackrel%
   {\raisebox{-2.4pt}[0ex][0pt]{$\scriptstyle \,\?\thicksim$}}%
   {\scriptstyle\Leibniz}$}}}}
\newcommand{\tarskiscrscr}{\mathord%
   {{\m@th\raisebox{0pt}[0pt][0pt]{$\stackrel%
   {\raisebox{-2pt}[0ex][0pt]{$\scriptscriptstyle \,\?\thicksim$}}%
   {\scriptscriptstyle\Leibniz}$}}}}
\newcommand{\Tarski}{\@ifnextchar ^ %
   {\mathchoice{\tarskidsp\kern-.07em}{\tarskitxt\kern-.07em}%
   {\tarskiscr\kern-.07em}{\tarskiscrscr\kern-.07em}}%
   {\mathchoice{\tarskidsp}{\tarskitxt}{\tarskiscr}{\tarskiscrscr}}}
\DeclareMathAlphabet{\mathbfsf}{\encodingdefault}{\sfdefault}{bx}n
\providecommand*{\Dashv}{\mathrel{\mathpalette\@Dashv\vDash}}
\newcommand*{\@Dashv}[2]{\reflectbox{$\m@th#1#2$}}
\renewcommand\geq{\geqslant}
\newcommand\pair[1]{{\langle#1\rangle}}
\newcommand{\FFi}{\mathcal{F}i}
\newcommand\PL{{\mathcal{P}}_{\textit{\l}}}
\newcommand{\bit}{\begin{itemize}}    
\newcommand{\eit}{\end{itemize}}
\newcommand{\ben}{\begin{enumerate}}
\newcommand{\een}{\end{enumerate}}
\newcommand{\benroman}{\ben[\normalfont (i)]}  
\let\eroman\een
\newcommand{\bde}{\begin{description}}
\newcommand{\ede}{\end{description}}
\newcommand{\Var}{\mathnormal{V\mkern-.8\thinmuskip ar}} 
\newcommand{\?}{\ensuremath{\mkern0.4\thinmuskip}}   
\newcommand{\sineq}{\mathrel{\dashv\mkern1.5mu\vdash}}  
\begin{document}

\title{Logics of left variable inclusion and P\l onka sums of matrices}
\subjclass[2010]{Primary: 03G27. Secondary: 03G25.}

\keywords{P\l onka sums, Kleene logics, abstract algebraic logic, regular varieties}
\date{\today}

\author{S. Bonzio}
\address{Stefano Bonzio \\
Department of Biomedical Sciences and Public Health, 
Polytechnic University of the Marche, Ancona, Italy.}
\email{stefano.bonzio@gmail.com}
\author{T. Moraschini}
\address{Tommaso Moraschini\\
Department of Philosophy, Faculty of Philosophy, Carrer Montalegre 6, Barcelona.}
\email{tommaso.moraschini@gmail.com}
\author{M. Pra Baldi}
\address{Michele Pra Baldi \\
Department of Pedagogy, Psychology and Philosophy,
University of Cagliari, Italy.}
\email{m.prabaldi@gmail.com}


\maketitle

\begin{abstract}
The paper aims at studying, in full generality, logics defined by imposing a variable inclusion condition on a given logic $\vdash$. We prove that the description of the algebraic counterpart of the left variable inclusion companion of a given logic $\vdash$ is related to the construction of P\l onka sums of the matrix models of $\vdash$. This observation allows to obtain a Hilbert-style axiomatization of the logics of left variable inclusion, to describe the structure of their reduced models, and to locate them in the Leibniz hierarchy. 
\end{abstract}

\section{introduction}

It is always possible to associate with an arbitrary propositional logic $\vdash$, two new substitution-invariant consequence relations $\vdash^{l}$ and $\vdash^{r}$, which satisfy respectively a \textit{left} and a \textit{right variable inclusion principle}, as follows:
\[
\Gamma \vdash^{l} \varphi \Longleftrightarrow \text{ there is }\Delta \subseteq \Gamma \text{ s.t. }\Var(\Delta)\subseteq\Var(\varphi) \text{ and } \Delta\vdash\varphi,
\]
and
\[
\Gamma\vdash^{r}\varphi\iff \left\{ \begin{array}{ll}
\Gamma\vdash\varphi  \ \text{and} \ \Var(\varphi)\subseteq\Var(\Gamma),  \text{  or}\\
\Sigma\subseteq\Gamma, \text{  with $\Sigma$ a set of inconsistency terms for $\vdash$.} \\
  \end{array} \right.  
\]

Accordingly, we say that the logics $\vdash^{l}$ and $\vdash^{r}$ are, respectively, the \textit{left} and the \textit{right variable inclusion companions} of $\vdash$, sometimes also referred to as \emph{contaiment logics}.

Prototypical examples of variable inclusion companions are found in the realm of three-valued logics. For instance, the left and the right variable inclusion companions of classical (propositional) logic are respectively \textit{paraconsistent weak Kleene logic} (\PWK for short) \cite{Hallden, Kleene}, and \textit{Bochvar logic} \cite{Bochvar}. The fact that these logics coincide with the variable inclusion companions of classical logic was shown in \cite{CiuniCarrara,Urquhart2001}. Remarkably, both \PWK and Bochvar logic feature the presence of a non-sensical, infectious truth value \cite{Szmuc,Ciuni2}, which made them a valuable tool in modeling reasonings with non-existing objects \cite{Prior}, computer-programs affected by errors \cite{Ferguson} as well as recent developments in the theory of truth \cite{Szmuctruth} and philosophy of logic \cite{BoemBonzio}.

Recent work \cite{Bonzio16} linked \PWK to the algebraic theory of regular varieties, i.e.\ equational classes axiomatized by equations $\varphi \thickapprox \psi$ such that $\Var(\varphi) = \Var(\psi)$. The representation theory of regular varieties is largely due to the pioneering work of P\l onka \cite{Plo67}, and is tightly related to a special class-operator $\PL(\cdot)$ nowadays called \textit{P\l onka sums}. Over the years, regular varieties have been studied in depth both from a purely algebraic perspective \cite{Plo67a,Kal71,Harding2016, Harding20172} and in connection to their topological duals \cite{GR91, Loi, Romanowska97,SB18,Ledda2018}. The machinery of P\l onka sums has also found useful applications in the study of the constraint satisfaction problem \cite{Bergman2015} and database semantics \cite{Libkin,Puhlmann} and in the application of algebraic methods in computer science \cite{BonzioValota}.

One of the main results of \cite{Bonzio16} states that the algebraic counterpart of \PWK is the class of P\l onka sums of Boolean algebras. This observation led us to investigate the relations between left variable inclusion companions and P\l onka sums in full generality.\footnote{A similar investigation of right variable inclusion companions is developed in \cite{BonzioPraBa}.} Our study is carried on in the conceptual framework of abstract algebraic logic \cite{Cz01,Font16,FJa09}.

We begin by generalizing the construction of P\l onka sums from algebras to logical matrices (Section \ref{sec: definizione e primi risultati}). This allows us to condense the connection between left variable inclusion principles and P\l onka sums in the following slogan: The left variable inclusion companion $\vdash^{l}$ of a logic $\vdash$ is complete w.r.t.\ the class of P\l onka sums of matrix models of $\vdash$ (Corollary \ref{cor:completeness}).

As a matter of fact, left variable inclusion companions $\vdash^{l}$ are especially well-behaved in case the original logic $\vdash$ has a \textit{partition function} \cite{romanowska2002modes}, a feature shared by the vast majority of non-pathological logics in the literature. The importance of partition functions is reflected both at a syntactic and at a semantic level. Accordingly, on the one hand we present a general method to transform every Hilbert-style calculus for a finitary logic $\vdash$ with a partition function into a Hilbert-style calculus for $\vdash^{l}$ (Theorem \ref{theor:HilbertCalculus}). On the other hand, partition functions can be exploited to tame the structure of the matrix semantics $\ModS(\vdash^{l})$ of $\vdash^{l}$, given by the so-called Suszko reduced models of $\vdash^{l}$. In particular, we obtain  a full description of $\ModS(\vdash^{l})$ in case $\vdash$ is a finitary equivalential logic with a partition function (Theorems \ref{Thm: caratterizzazione suszko regolare} and \ref{Thm: caratterizzazione suszko regolare2}). We close our investigation by determining the location of $\vdash^{l}$ in the Leibniz hierarchy (Section  \ref{sec: Leibniz hierarchy}).

%

\section{Preliminaries}\label{sec: preliminari}

\subsection*{Abstract Algebraic Logic}

For standard background on universal algebras and abstract algebraic logic we refer the reader respectively to  \cite{Be11g,BuSa00,DeWi02,McMcTa87} and \cite{BP89,BP86,BP92,Cz01,Font16,FJa09,FJaP03b,W88}.  In this paper, algebraic languages are assumed not to contain constant symbols.\ Moreover, unless stated otherwise, we work within a fixed but arbitrary algebraic language. We denote algebras by $\A, \B, \C\dots$ respectively with universes $A, B, C \dots$ A class of algebras is a \textit{variety} if is axiomatized by equations. Given a class of algebras $\mathsf{K}$, we denote by $\VVV(\mathsf{K})$ the variety generated by $\mathsf{K}$. 
Let $\Fm$ be the algebra of formulas built up over a countably infinite set $\Var$ of variables. Given a formula $\varphi\in Fm$, we denote by $\Var(\varphi)$ the set of variables really occurring in $\varphi$. Similarly, given $\Gamma\subseteq Fm$, we set
\[
\Var(\Gamma)=\bigcup \{\Var(\gamma)\colon \gamma\in\Gamma\}.
\]
A \emph{logic} is a substitution invariant consequence relation $\vdash \?\? \subseteq \mathcal{P}(Fm) \times Fm$, namely for every substitution $\sigma \colon \Fm \to \Fm$,
\[
\text{if }\Gamma \vdash \varphi \text{, then }\sigma [\Gamma] \vdash \sigma (\varphi).
\]
Given $\varphi, \psi \in Fm$, we write $\varphi \sineq \psi$ as a shorthand for $\varphi \vdash \psi$ and $\psi \vdash \varphi$. Moreover, we denote by $\mathrm{Cn}_{\vdash} \colon \mathcal{P}(Fm) \to \mathcal{P}(Fm)$ the closure operator associated with $\vdash$. A logic $\vdash$ is \emph{finitary} when the following holds for all $\Gamma\cup\{\varphi\}\subseteq Fm$:
\begin{align*}
\Gamma\vdash\varphi \Longleftrightarrow \exists \Delta \subseteq\Gamma \text{ s.t. } \Delta \text{ is finite and } \Delta\vdash\varphi.
\end{align*}

 A \emph{matrix} is a pair $\langle \A, F\rangle$ where $\A$ is an algebra and $F \subseteq A$. In this case, $\A$ is called the \textit{algebraic reduct} of the matrix $\langle \A, F \rangle$.  We denote by $\III, \SSS, \PPP$ and $\PSD$ respectively the class operators of isomorphic copies, substructures, direct products and subdirect products, which apply both to classes of algebras and classes of matrices.
 
 Every class of matrices $\mathsf{M}$ induces a logic as follows:
\begin{align*}
\Gamma \vdash_{\mathsf{M}} \varphi \Longleftrightarrow& \text{ for every }\langle \A, F \rangle \in \mathsf{M} \text{ and homomorphism }h \colon \Fm \to \A,\\
& \text{ if }h[\Gamma] \subseteq F\text{, then }h(\varphi) \in F.
\end{align*}
A logic $\vdash$ is \emph{complete} w.r.t.\ a class of matrices $\mathsf{M}$ when it coincides with $\vdash_{\mathsf{M}}$. 

 A matrix $\langle \A, F\rangle$ is a \emph{model} of a logic $\vdash$ when
\begin{align*}
\text{if }\Gamma \vdash \varphi, &\text{ then for every homomorphism }h \colon \Fm \to \A,\\  
&\text{ if }h[\Gamma] \subseteq F\text{, then }h(\varphi) \in F.
\end{align*}
A set $F \subseteq A$ is a (deductive) \textit{filter} of $\vdash$ on $\A$, or simply a $\vdash$-\textit{filter}, when the matrix $\langle \A, F \rangle$ is a model of $\vdash$. We denote by $\FFi_{\vdash}\A$ the set of all filters of $\vdash$ on $\A$, which turns out to be a closure system. Moreover, we denote by $\textup{Fg}_{\vdash}^{\A}(\cdot)$ the closure operator of $\vdash$-filter generation on $\A$.

Let $\A$ be an algebra and $F \subseteq A$. A congruence $\theta$ of $\A$ is \emph{compatible} with $F$ when for every $a,b\in A$,
\[
\text{if }a\in F\text{ and }\langle a, b \rangle \in \theta\text{, then }b\in F.
\]
It turns out that there exists the largest congruence of $\A$ which is compatible with $F$. This congruence is called the \emph{Leibniz congruence} of $F$ on $\A$, and it is denoted by $\Leibniz^{\A}F$. 

Let $\A$ be an algebra, $F \subseteq A$ and $\vdash$ be a logic. The \emph{Suszko congruence} of $F$ on $\A$, is defined as
\[
\Tarski^{\A}_{\vdash}F \coloneqq \bigcap \{ \Leibniz^{\A}G : F \subseteq G \text{ and }G \in \FFi_{\vdash}\A \}.
\]

 Let $\A$ be an algebra. A function $p \colon A^{n} \to A$ is a \textit{polynomial function} of $\A$ if there are a natural number $m$, a formula $\varphi(x_{1}, \dots, x_{n+m})$, and elements $b_{1}, \dots, b_{m} \in A$ such that
\begin{align*}
p(a_{1}, \dots, a_{n}) = \varphi^{\A}(a_{1}, \dots, a_{n}, b_{1}, \dots, b_{m})
\end{align*}
for every $a_{1}, \dots, a_{n} \in A$. 
\begin{lemma}\cite[Thm.\ 4.23]{Font16}\label{lem: polynomial charact of leib cong}
Let $\A$ be an algebra, $F \subseteq A$ and $a, b \in A$. 
\begin{align*}
\langle a, b \rangle \in \Leibniz^{\A}F \Longleftrightarrow & \text{ for every unary pol. function }p \colon A \to A,\\
&\text{ } p(a) \in F \text{ if and only if }p(b) \in F.
\end{align*}
\end{lemma}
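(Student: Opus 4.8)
The plan is to identify the relation on the right-hand side with $\Leibniz^{\A}F$ by exploiting the latter's defining property as the \emph{largest} congruence of $\A$ compatible with $F$. Write $\theta$ for the binary relation on $A$ given by
\[
\langle a, b \rangle \in \theta \iff \text{for every unary pol.\ function }p \colon A \to A, \quad p(a) \in F \text{ iff } p(b) \in F.
\]
The statement then amounts to the two inclusions $\theta \subseteq \Leibniz^{\A}F$ and $\Leibniz^{\A}F \subseteq \theta$, which I would establish separately.

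First I would check that $\theta$ is a congruence of $\A$ compatible with $F$. That $\theta$ is an equivalence relation is immediate, since the biconditional defining it is reflexive, symmetric, and transitive in $a, b$ uniformly in $p$. Compatibility with $F$ follows by instantiating $p$ as the identity polynomial (the one induced by the formula $x_{1}$): if $a \in F$ and $\langle a, b \rangle \in \theta$, then $b = p(b) \in F$. The substantive point is that $\theta$ is compatible with the basic operations. Given an $n$-ary operation symbol $f$ and pairs $\langle a_{i}, b_{i} \rangle \in \theta$, I would pass from $f(a_{1}, \dots, a_{n})$ to $f(b_{1}, \dots, b_{n})$ by replacing one coordinate at a time, and at the $i$-th stage use the key observation that, for any unary polynomial $p$, the map $x \mapsto p(f(c_{1}, \dots, c_{i-1}, x, c_{i+1}, \dots, c_{n}))$ is again a unary polynomial function of $\A$. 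Applying the defining property of $\theta$ to $\langle a_{i}, b_{i}\rangle$ through this new polynomial shows that the two consecutive intermediate elements are $\theta$-related, and transitivity then yields $\langle f(a_{1}, \dots, a_{n}), f(b_{1}, \dots, b_{n})\rangle \in \theta$. Since $\Leibniz^{\A}F$ is the largest congruence compatible with $F$, this gives $\theta \subseteq \Leibniz^{\A}F$.

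For the reverse inclusion I would use that every congruence is preserved by polynomial functions: if $\langle a, b\rangle \in \Leibniz^{\A}F$ and $p$ is a unary polynomial, then $\langle p(a), p(b)\rangle \in \Leibniz^{\A}F$, because $\Leibniz^{\A}F$ is compatible with all basic operations and hence with the term operation underlying $p$ evaluated at fixed parameters. Combining this with the compatibility of $\Leibniz^{\A}F$ with $F$ gives $p(a) \in F \Rightarrow p(b) \in F$, and by symmetry the full biconditional, so $\langle a, b\rangle \in \theta$. Hence $\Leibniz^{\A}F \subseteq \theta$, and the two relations coincide.

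I expect the only real obstacle to be the closure of $\theta$ under the operations of $\A$; everything else is bookkeeping. The crux is the elementary but essential remark that substituting a unary polynomial into a single argument of an operation, with the remaining arguments held fixed, again produces a unary polynomial. It is precisely this closure of the class of polynomial functions under composition with the basic operations that makes the one-coordinate-at-a-time argument go through.
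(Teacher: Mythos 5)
Your proof is correct. The paper does not actually prove this lemma --- it is quoted verbatim from Font's textbook \cite[Thm.\ 4.23]{Font16} --- and your argument (showing that the polynomially defined relation $\theta$ is a congruence compatible with $F$, then using maximality of $\Leibniz^{\A}F$ for one inclusion and preservation of congruences under unary polynomial functions together with compatibility for the other) is precisely the standard proof given in that source, including the key closure observation that substituting a polynomial into one argument of a basic operation with the remaining arguments fixed yields again a unary polynomial function.
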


\begin{lemma}\cite[Thm.\ 5.32]{Font16}\label{lem:polynomial charact. Suszko cong.}
Let $\vdash$ be a logic, $\A$ be an algebra, $F \subseteq A$ and $a, b \in A$. 
\begin{align*}
\langle a, b \rangle \in \Tarski^{\A}_{\vdash}F \Longleftrightarrow & \text{ for every unary pol. function }p \colon A\to A,\\
&\text{ } \textup{Fg}_{\vdash}^{\A}(F\cup\{p(a)\}) =  \textup{Fg}_{\vdash}^{\A}(F\cup\{p(b)\}).
\end{align*}
\end{lemma}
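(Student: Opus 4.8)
The plan is to read off this polynomial description of the Suszko congruence directly from its defining formula $\Tarski^{\A}_{\vdash}F = \bigcap \{ \Leibniz^{\A}G : F \subseteq G \in \FFi_{\vdash}\A \}$, using the polynomial characterization of the Leibniz congruence recorded in Lemma~\ref{lem: polynomial charact of leib cong} as the bridge between the two sides. The first observation I would isolate is that, for a fixed unary polynomial function $p$, the equality $\textup{Fg}_{\vdash}^{\A}(F\cup\{p(a)\}) = \textup{Fg}_{\vdash}^{\A}(F\cup\{p(b)\})$ is equivalent to the conjunction of the two membership statements $p(b) \in \textup{Fg}_{\vdash}^{\A}(F\cup\{p(a)\})$ and $p(a) \in \textup{Fg}_{\vdash}^{\A}(F\cup\{p(b)\})$; this is immediate from monotonicity and idempotency of the closure operator $\textup{Fg}_{\vdash}^{\A}$. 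This reformulation is exactly what lets me connect the filter-generation condition on the right with the $G$-membership conditions that govern the Leibniz congruences appearing in the intersection on the left.

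For the implication from right to left, I would assume the displayed equality holds for every unary polynomial $p$ and fix an arbitrary filter $G \in \FFi_{\vdash}\A$ with $F \subseteq G$. By Lemma~\ref{lem: polynomial charact of leib cong} it suffices to prove that $p(a) \in G$ iff $p(b) \in G$ for every unary polynomial $p$, since this yields $\langle a,b\rangle \in \Leibniz^{\A}G$. So, assuming $p(a) \in G$, we get $F \cup \{p(a)\} \subseteq G$, whence $\textup{Fg}_{\vdash}^{\A}(F\cup\{p(a)\}) \subseteq G$ because $G$ is a $\vdash$-filter; then the hypothesis gives $p(b) \in \textup{Fg}_{\vdash}^{\A}(F\cup\{p(b)\}) = \textup{Fg}_{\vdash}^{\A}(F\cup\{p(a)\}) \subseteq G$, and the converse implication is symmetric. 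Since $G$ was arbitrary, $\langle a,b\rangle$ lies in every $\Leibniz^{\A}G$, hence in $\Tarski^{\A}_{\vdash}F$.

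For the converse, I would assume $\langle a,b\rangle \in \Tarski^{\A}_{\vdash}F$ and fix a unary polynomial $p$. The key move is to feed the defining intersection a single well-adapted test filter: take $G := \textup{Fg}_{\vdash}^{\A}(F\cup\{p(a)\})$, which satisfies $F \subseteq G$, so that $\langle a,b\rangle \in \Tarski^{\A}_{\vdash}F \subseteq \Leibniz^{\A}G$. As $p(a) \in G$ by construction, Lemma~\ref{lem: polynomial charact of leib cong} forces $p(b) \in G$, that is $\textup{Fg}_{\vdash}^{\A}(F\cup\{p(b)\}) \subseteq \textup{Fg}_{\vdash}^{\A}(F\cup\{p(a)\})$; since $\Tarski^{\A}_{\vdash}F$ is symmetric, interchanging $a$ and $b$ gives the reverse inclusion, and the two generated filters coincide. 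I do not expect a serious obstacle: the only genuine choice is realizing, in this last direction, that one should instantiate the intersection defining $\Tarski^{\A}_{\vdash}F$ at the generated filter $\textup{Fg}_{\vdash}^{\A}(F\cup\{p(a)\})$ rather than arguing over an unspecified $G$, after which the closure-operator properties and Lemma~\ref{lem: polynomial charact of leib cong} close everything routinely.
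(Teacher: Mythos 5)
Your proof is correct. Note, however, that the paper does not prove this lemma at all: it is quoted directly from Font's textbook (\cite[Thm.\ 5.32]{Font16}), just as the companion Lemma~\ref{lem: polynomial charact of leib cong} on the Leibniz congruence is. Your argument --- unfolding the definition $\Tarski^{\A}_{\vdash}F = \bigcap\{\Leibniz^{\A}G : F \subseteq G \text{ and } G \in \FFi_{\vdash}\A\}$ and bridging to the filter-generation condition via the polynomial characterization of $\Leibniz^{\A}G$ --- is the standard derivation, essentially the one found in the cited source. Both directions check out: from right to left, an arbitrary filter $G \supseteq F$ with $p(a) \in G$ contains $\textup{Fg}_{\vdash}^{\A}(F\cup\{p(a)\}) = \textup{Fg}_{\vdash}^{\A}(F\cup\{p(b)\}) \ni p(b)$, so Lemma~\ref{lem: polynomial charact of leib cong} puts $\langle a,b\rangle$ in every $\Leibniz^{\A}G$; from left to right, instantiating the intersection at the single test filter $G = \textup{Fg}_{\vdash}^{\A}(F\cup\{p(a)\})$ is indeed the one non-routine move, and the symmetry of the congruence then yields the reverse inclusion of generated filters. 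Your preliminary reduction of the filter equality to the two membership statements, via monotonicity and idempotency of $\textup{Fg}_{\vdash}^{\A}$, is also sound, since $\FFi_{\vdash}\A$ is a closure system and $\textup{Fg}_{\vdash}^{\A}$ its associated closure operator.
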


The Leibniz and Suszko congruences allow to associate two distinguished classes of models to logics. More precisely, given a logic $\vdash$, we set
\begin{align*}
\Mod(\vdash) & \coloneqq \{ \langle \A, F \rangle : \langle \A, F \rangle \text{ is a model of }\vdash \};\\
\Modstar(\vdash) & \coloneqq \{ \langle \A, F \rangle \in \Mod(\vdash) : \Leibniz^{\A}F \text{ is the identity} \};\\
\ModS(\vdash) & \coloneqq \{ \langle \A, F \rangle \in \Mod(\vdash) : \Tarski_{\vdash}^{\A}F \text{ is the identity} \}.
\end{align*}
The above classes of matrices are called, respectively, the classes of \text{models}, \textit{Leibniz reduced models}, and \textit{Suszko reduced models} of $\vdash$. It turns out that $\ModS(\vdash) = \PSD \Modstar(\vdash)$.

Trivial matrices will play a useful role in the whole paper. More precisely, a matrix $\langle \A, F \rangle$ is \textit{trivial} if $F = A$. We denote by $\langle \boldsymbol{1}, \{ 1 \} \rangle$ the trivial matrix, where $\boldsymbol{1}$ is the trivial algebra. Observe that the latter matrix is a model (resp.\ Leibniz and Suszko reduced model) of every logic. Moreover, if $\vdash$ is a logic and $\langle \A, F\rangle \in \ModS(\vdash)$ is a trivial matrix, then $\langle \A, F\rangle = \langle \boldsymbol{1}, \{ 1 \}\rangle$.

Given a logic $\vdash$, we set
\[
\Alg(\vdash) = \{ \A: \text{there is }F \subseteq A \text{ s.t. }\langle \A, F \rangle \in \ModS(\vdash) \}.
\]
In other words, $\Alg(\vdash)$ is the class of algebraic reducts of matrices in $\ModS(\vdash)$. The class $\Alg(\vdash)$ is called the \textit{algebraic counterpart} of $\vdash$. For the vast majority of logics $\vdash$, the class $\Alg(\vdash)$  is the class of algebras intuitively associated with $\vdash$.

\begin{lemma}\cite[Lemma 5.78]{Font16}\label{lem:algebraicreducts}
Let $\vdash$ be a logic defined by a class of matrices $\mathsf{M}$. Then $\Alg(\vdash) \subseteq \mathbb{V}(\mathsf{K})$, where $\mathsf{K}$ is the class of algebraic reducts of $\mathsf{M}$.
\end{lemma}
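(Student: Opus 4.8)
The plan is to show that every equation valid in $\mathsf{K}$ is also valid in every algebra belonging to $\Alg(\vdash)$; since, by Birkhoff's theorem, $\mathbb{V}(\mathsf{K})$ is precisely the class of algebras satisfying all equations true throughout $\mathsf{K}$, this yields $\Alg(\vdash) \subseteq \mathbb{V}(\mathsf{K})$. As a preliminary simplification I would invoke the identity $\ModS(\vdash) = \PSD\Modstar(\vdash)$ recorded above: the algebraic reduct of a subdirect product of matrices is a subalgebra of the direct product of their reducts, and varieties are closed under $\SSS$ and $\PPP$. Hence it suffices to prove that the algebraic reduct $\A$ of every Leibniz reduced model $\langle \A, F\rangle \in \Modstar(\vdash)$ lies in $\mathbb{V}(\mathsf{K})$. (One could instead argue directly on $\ModS(\vdash)$ through the Suszko congruence and Lemma~\ref{lem:polynomial charact. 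Suszko cong.}; I sketch the Leibniz version as it is marginally cleaner.)

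For the core step, fix $\langle \A, F\rangle \in \Modstar(\vdash)$, so that $\Leibniz^{\A}F$ is the identity, and fix an equation $\varphi \approx \psi$ holding throughout $\mathsf{K}$. Given an arbitrary homomorphism $h\colon \Fm \to \A$, I must establish $h(\varphi) = h(\psi)$, which, since $\Leibniz^{\A}F$ is the identity, amounts to $\langle h(\varphi), h(\psi)\rangle \in \Leibniz^{\A}F$. By the polynomial characterization of the Leibniz congruence (Lemma~\ref{lem: polynomial charact of leib cong}), this reduces to verifying, for every unary polynomial function $p\colon A \to A$, that $p(h(\varphi)) \in F$ if and only if $p(h(\psi)) \in F$.

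The heart of the argument is the translation of polynomials into formula contexts. Writing $p(z) = \chi^{\A}(z, a_1, \dots, a_m)$ for a formula $\chi(x, y_1, \dots, y_m)$ and parameters $a_1, \dots, a_m \in A$ (renaming variables so that $x, y_1, \dots, y_m$ do not occur in $\varphi$ or $\psi$), I extend $h$ to a homomorphism $h'$ with $h'(y_i) = a_i$; then $p(h(\varphi)) = h'(\chi_\varphi)$ and $p(h(\psi)) = h'(\chi_\psi)$, where $\chi_\varphi$ and $\chi_\psi$ are obtained by substituting $\varphi$, respectively $\psi$, for $x$ in $\chi$. Since $\varphi \approx \psi$ holds in $\mathsf{K}$, the substituted equation $\chi_\varphi \approx \chi_\psi$ holds in $\mathsf{K}$ too; because $\vdash = \vdash_{\mathsf{M}}$ and every member of $\mathsf{K}$ is the algebraic reduct of a matrix in $\mathsf{M}$, equality of interpretations in all such matrices forces $\chi_\varphi \sineq \chi_\psi$. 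Finally, as $\langle \A, F\rangle$ is a model of $\vdash$, this interderivability transfers through $h'$, giving $h'(\chi_\varphi) \in F$ if and only if $h'(\chi_\psi) \in F$, that is $p(h(\varphi)) \in F$ iff $p(h(\psi)) \in F$, as required.

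I expect the main difficulty to be bookkeeping rather than conceptual: the careful handling of parameters in a unary polynomial (the variable renaming and the extension of $h$ to $h'$), and the precise justification that an equation true in every algebra of $\mathsf{K}$ yields interderivability in $\vdash_{\mathsf{M}}$. This last passage relies only on the hypothesis that $\mathsf{M}$ defines $\vdash$, together with the definition of $\vdash_{\mathsf{M}}$, and in particular requires no closure of $\mathbb{V}(\mathsf{K})$ under ultraproducts. The reduction via $\ModS(\vdash) = \PSD\Modstar(\vdash)$ is then routine, since varieties are closed under $\SSS$ and $\PPP$.
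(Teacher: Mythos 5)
Your proof is correct, but note that the paper itself offers no argument for this lemma: it is imported verbatim from Font's textbook (Lemma 5.78 there), and even the paper's nearby Lemma \ref{lemma su equazioni p-function} is proved only by citation. So what you have produced is a self-contained reconstruction, and it checks out at every step: the reduction via $\ModS(\vdash)=\PSD\Modstar(\vdash)$ is legitimate, since the algebraic reduct of a subdirect product of matrices is a subalgebra of the product of the reducts and varieties are closed under $\III$, $\SSS$, $\PPP$; the passage from an equation $\varphi\approx\psi$ valid in $\mathsf{K}$ to the contextual interderivabilities $\chi(\varphi,\vec{z})\sineq\chi(\psi,\vec{z})$ uses only the hypothesis $\vdash\,=\,\vdash_{\mathsf{M}}$ together with the fact that every algebra in $\mathsf{K}$ underlies a matrix of $\mathsf{M}$; and Lemma \ref{lem: polynomial charact of leib cong}, applied through your (correctly handled) translation between unary polynomial functions and formula contexts with fresh parameter variables, collapses $h(\varphi)$ and $h(\psi)$ in any Leibniz-reduced model. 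Two comparative remarks. First, your route is in substance the standard textbook argument, so nothing is lost by not having seen the source. Second, within the paper's own toolkit you could have been shorter: once you know $\chi(\varphi,\vec{z})\sineq\chi(\psi,\vec{z})$ for every context $\chi(v,\vec{z})$, the implication (ii)$\Rightarrow$(i) of Lemma \ref{lemma su equazioni p-function} immediately gives $\Alg(\vdash)\vDash\varphi\approx\psi$, and Birkhoff's theorem finishes the job; your core steps (the polynomial characterization of $\Leibniz^{\A}F$ plus the subdirect decomposition) amount precisely to re-proving that implication, which is what makes your version self-contained but longer.
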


\begin{lemma}\label{lemma su equazioni p-function}
Let $\vdash$ be a logic and $\epsilon,\delta\in Fm $. The following are equivalent:
\benroman
\item$\Alg(\vdash)\vDash\epsilon\approx\delta$;
\item $\varphi(\epsilon,\vec{z}\?\?)\sineq\varphi(\delta,\vec{z}\?\?)$, for every formula $\varphi(v,\vec{z}\?\?)$.
\eroman
\end{lemma}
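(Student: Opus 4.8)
The plan is to establish the two implications separately, using three ingredients: the definition of $\Alg(\vdash)$ as the class of algebraic reducts of the matrices in $\ModS(\vdash)$, the polynomial characterization of the Suszko congruence in Lemma \ref{lem:polynomial charact. Suszko cong.}, and the standard completeness of every logic with respect to its Suszko reduced models, i.e.\ the fact that $\vdash$ coincides with $\vdash_{\ModS(\vdash)}$ \cite{Font16}.

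To prove $(ii) \Rightarrow (i)$, I would take an arbitrary $\A \in \Alg(\vdash)$, witnessed by some $F \subseteq A$ with $\langle \A, F \rangle \in \ModS(\vdash)$, so that $\Tarski^{\A}_{\vdash}F$ is the identity. Fixing a homomorphism $h \colon \Fm \to \A$, the goal $h(\epsilon) = h(\delta)$ reduces to showing $\langle h(\epsilon), h(\delta)\rangle \in \Tarski^{\A}_{\vdash}F$, which by Lemma \ref{lem:polynomial charact. Suszko cong.} amounts to proving $\textup{Fg}_{\vdash}^{\A}(F \cup \{ p(h(\epsilon)) \}) = \textup{Fg}_{\vdash}^{\A}(F \cup \{ p(h(\delta)) \})$ for every unary polynomial function $p \colon A \to A$. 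Writing $p(a) = \varphi^{\A}(a, \vec{b})$ for a formula $\varphi(v, \vec{z})$ whose parameter variables $\vec{z}$ are chosen disjoint from $\Var(\epsilon) \cup \Var(\delta)$, and letting $g \colon \Fm \to \A$ agree with $h$ on $\Var(\epsilon) \cup \Var(\delta)$ and satisfy $g(\vec{z}) = \vec{b}$, we obtain $g(\varphi(\epsilon, \vec{z})) = p(h(\epsilon))$ and $g(\varphi(\delta, \vec{z})) = p(h(\delta))$. The decisive observation is that $G \coloneqq \textup{Fg}_{\vdash}^{\A}(F \cup \{ p(h(\epsilon)) \})$ is a $\vdash$-filter, so $\langle \A, G \rangle$ is a model of $\vdash$; since $(ii)$ yields $\varphi(\epsilon, \vec{z}) \vdash \varphi(\delta, \vec{z})$ and $p(h(\epsilon)) = g(\varphi(\epsilon, \vec{z})) \in G$, the model property forces $p(h(\delta)) = g(\varphi(\delta, \vec{z})) \in G$. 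Hence $\textup{Fg}_{\vdash}^{\A}(F \cup \{ p(h(\delta)) \}) \subseteq G$, and the reverse inclusion follows symmetrically. As $p$, $h$ and $\A$ are arbitrary, this gives $(i)$.

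For $(i) \Rightarrow (ii)$, I would fix a formula $\varphi(v, \vec{z})$ and prove $\varphi(\epsilon, \vec{z}) \vdash \varphi(\delta, \vec{z})$ (the converse derivation being symmetric) via completeness with respect to $\ModS(\vdash)$. Let $\langle \A, F \rangle \in \ModS(\vdash)$ and $g \colon \Fm \to \A$ be such that $g(\varphi(\epsilon, \vec{z})) \in F$. Since $\A \in \Alg(\vdash)$, hypothesis $(i)$ gives $\A \vDash \epsilon \approx \delta$, whence $g(\epsilon) = g(\delta)$; therefore $g(\varphi(\delta, \vec{z})) = \varphi^{\A}(g(\delta), g(\vec{z})) = \varphi^{\A}(g(\epsilon), g(\vec{z})) = g(\varphi(\epsilon, \vec{z})) \in F$, as desired.

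The only delicate point I anticipate is in $(ii) \Rightarrow (i)$: representing an arbitrary unary polynomial of $\A$ by a context $\varphi(v, \vec{z})$ of the form required by $(ii)$, with parameter variables taken fresh so that the assignment $g$ is well defined, and then exploiting that the generated filter $G$ is itself the distinguished set of a model of $\vdash$. This last point is exactly what lets the single-premise consequence supplied by $(ii)$ operate inside the filter-generation equality demanded by Lemma \ref{lem:polynomial charact. Suszko cong.}; everything else is routine.
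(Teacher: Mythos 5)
Your proof is correct, and both directions are sound: in $(ii)\Rightarrow(i)$ the reduction of $h(\epsilon)=h(\delta)$ to membership in $\Tarski^{\A}_{\vdash}F$, the representation of an arbitrary unary polynomial as $\varphi^{\A}(\cdot,\vec{b})$ with parameter variables chosen fresh, and the observation that $G=\textup{Fg}_{\vdash}^{\A}(F\cup\{p(h(\epsilon))\})$ is itself the filter of a model of $\vdash$ (so the single-premise consequence from $(ii)$ can be applied inside it) are exactly the points that need care, and you handle each of them; the converse direction via completeness with respect to $\ModS(\vdash)$ is routine and correct.

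Where you genuinely differ from the paper is that the paper does not prove this lemma at all: its ``proof'' is a pointer to Font's textbook (Lemma 5.74(1) and Theorem 5.76), where the result is obtained through the Tarski congruence of the logic and its characterization in terms of contexts and the Frege relation. Your argument is instead self-contained within the paper's own toolkit: it uses only Lemma \ref{lem:polynomial charact. Suszko cong.} (the polynomial characterization of the Suszko congruence, which the paper states) together with the standard fact that every logic is complete with respect to $\ModS(\vdash)$ (a fact the paper itself invokes in the proof of Corollary \ref{cor:completeness}). What your route buys is a direct, verifiable derivation that does not require the reader to unwind the textbook's intermediate notions; what the citation buys the authors is brevity and placement of the lemma in its standard general context. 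Mathematically the two rest on the same machinery, so your proof can be read as an inlined version of the cited results.
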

\begin{proof}
See \cite[Lemma 5.74(1)]{Font16} and \cite[Theorem 5.76]{Font16}.
\end{proof}

Now, we turn out attention to a fundamental topic in abstract algebraic logic, that is the so-called \textit{Leibniz hierarchy}, see for example \cite{Font16,JGRa11, JanMorI, JanMorII, JanMorIII}.\ We review only the material which is necessary for the present purpose. A logic $\vdash$ is \textit{protoalgebraic} if there is a set of formulas $\Delta(x, y)$ such that
\[
\emptyset \vdash \Delta(x, x) \text{ and }x, \Delta(x, y) \vdash y.
\]
Remarkably, $\vdash$ is protoalgebraic if and only if $\Modstar(\vdash) = \ModS(\vdash)$.

A logic $\vdash$ is \textit{equivalential} if there is a set of formulas $\Delta(x, y)$ such that for every $\langle \A, F \rangle \in \Mod(\vdash)$,
\[
\langle a, b \rangle \in \Leibniz^{\A}F \Longleftrightarrow \Delta^{\A}(a, b) \subseteq F\text{, for all }a, b \in A.
\]
In this case, $\Delta(x, y)$ is said to be a set of \textit{congruence formulas} for $\vdash$. Remarkably, $\vdash$ is equivalential if and only if $\Modstar(\vdash)$ is closed under $\SSS$ and $\PPP$. Consequently, every equivalential logic is protoalgebraic.

A logic $\vdash$ is \textit{truth-equational} if there is a set of equations $\boldsymbol{\tau}(x)$ such that for all $\langle \A, F \rangle \in \Modstar(\vdash)$,
\[
a \in F \Longleftrightarrow \A \vDash \boldsymbol{\tau}(a)\text{, for all }a\in A.
\]
In this case, $\boldsymbol{\tau}(x)$ is said to be a set of \textit{defining equations} for $\vdash$.

Finally, a logic $\vdash$ is \textit{algebraizable} when it is both equivalential and truth-equational. In this case, $\Alg(\vdash)$ is called the \textit{equivalent algebraic semantics} of $\vdash$.

\subsection*{P\l onka sums}

For standard information on P\l onka sums we refer the reader to \cite{Plo67a, Plo67,Romanowska92, romanowska2002modes}. A \textit{semilattice} is an algebra $\A = \langle A, \lor\rangle$, where $\lor$ is a binary commutative, associative and idempotent operation. Given a semilattice $\A$ and $a, b \in A$, we set
\[
a \leq b \Longleftrightarrow a \lor b = b.
\]
It is easy to see that $\leq$ is a partial order on $A$.
\begin{definition}\label{Def:Directed-System-Matrices}
A \textit{directed system of algebras} consists of:
\benroman
\item a semilattice $I = \langle I, \lor\rangle$;
\item a family of algebras $\{ \A_{i} : i \in I \}$ with disjoint universes;
\item a homomorphism $f_{ij} \colon \A_{i} \to \A_{j}$, for every $i, j \in I$ such that $i \leq j$;
\eroman
moreover, $f_{ii}$ is the identity map for every $i \in I$, and if $i \leq j \leq k$, then $f_{ik} = f_{jk} \circ f_{ij}$.
\end{definition}

Let $X$ be a directed system of algebras as above. The \textit{P\l onka sum} of $X$, in symbols $\PL(X)$ or $\PL(\A_{i})_{i \in I}$, is the algebra defined as follows. The universe of $\PL(\A_{i})_{i \in I}$ is the union $\bigcup_{i \in I}A_{i}$. Moreover, for every $n$-ary basic operation $f$ and $a_{1}, \dots, a_{n} \in \bigcup_{i \in I}A_{i}$, we set
\[
f^{\PL(\A_{i})_{i \in I}}(a_{1}, \dots, a_{n}) \coloneqq f^{\A_{j}}(f_{i_{1} j}(a_{1}), \dots, f_{i_{n} j}(a_{n}))
\]
where $a_{1} \in A_{i_{1}}, \dots, a_{n} \in A_{i_{n}}$ and $j = i_{1} \lor \dots \lor i_{n}$.\ 

Observe that if in the above display we replace $f$ by any complex formula $\varphi$ in $n$-variables, we still have that
\[
\varphi^{\PL(\A_{i})_{i \in I}}(a_{1}, \dots, a_{n}) = \varphi^{\A_{j}}(f_{i_{1} j}(a_{1}), \dots, f_{i_{n} j}(a_{n})).
\]
\noindent \textbf{Notation:} Given a formula $\varphi$, we will often write $\varphi^{\PL}$ instead of $\varphi^{\PL(\A_{i})_{i \in I}}$ when no confusion shall occur. 

\vspace{5pt}

The theory of P\l onka sums is strictly related with a special kind of operation:

\begin{definition}\label{def: partition function}
Let $\A$ be an algebra of type $\nu$. A function $\cdot\colon A^2\to A$ is a \emph{partition function} in $\A$ if the following conditions are satisfied for all $a,b,c\in A$, $ a_1 , ..., a_n\in A^{n} $ and for any operation $g\in\nu$ of arity $n\geqslant 1$.
\begin{enumerate}[label=\textbf{P\arabic*}., leftmargin=*]
\item $a\cdot a = a$
\item $a\cdot (b\cdot c) = (a\cdot b) \cdot c $
\item $a\cdot (b\cdot c) = a\cdot (c\cdot b)$
\item $g(a_1,\dots,a_n)\cdot b = g(a_1\cdot b,\dots, a_n\cdot b)$
\item $b\cdot g(a_1,\dots,a_n) = b\cdot a_{1}\cdot_{\dots}\cdot a_n $
\end{enumerate}
\end{definition}

The next result makes explicit the relation between P\l onka sums and partition functions:

\begin{theorem}\cite[Thm.~II]{Plo67}\label{th: Teorema di Plonka}
Let $\A$ be an algebra of type $\nu$ with a partition funtion $\cdot$. The following conditions hold:
\begin{enumerate}
\item $A$ can be partitioned into $\{ A_{i} : i \in I \}$ where any two elements $a, b \in A$ belong to the same component $A_{i}$ exactly when
\[
a= a\cdot b \text{ and }b = b\cdot a.
\]
Moreover, every $A_{i}$ is the universe of a subalgebra $\A_{i}$ of $\A$.
\item The relation $\leq$ on $I$ given by the rule
\[
i \leq j \Longleftrightarrow \text{ there exist }a \in A_{i}, b \in A_{j} \text{ s.t. } b\cdot a =b
\]
is a partial order and $\langle I, \leq \rangle$ is a semilattice. 
\item For all $i,j\in I$ such that $i\leq j$ and $b \in A_{j}$, the map $f_{ij} \colon A_{i}\to A_{j}$, defined by the rule $f_{ij}(x)= x\cdot b$ is a homomorphism. The definition of $f_{ij}$ is independent from the choice of $b$, since $a\cdot b = a\cdot c$, for all $a\in A_i$ and $c\in A_j$.
\item $Y = \langle \langle I, \leq \rangle, \{ \A_{i} \}_{i \in I}, \{ f_{ij} \! : \! i \leq j \}\rangle$ is a directed system of algebras such that $\PL(Y)=\A$.
\end{enumerate}
\end{theorem}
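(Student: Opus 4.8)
The plan is to build the directed system directly out of the partition function and to verify the four clauses in turn; throughout I write $a\cdot b$ for the partition function and use the axioms \textbf{P1}--\textbf{P5} freely. I would begin with clause~(1) by introducing the relation $a\sim b$ defined by $a\cdot b=a$ and $b\cdot a=b$, and checking it is an equivalence relation: reflexivity is \textbf{P1}, symmetry is immediate, and transitivity uses associativity \textbf{P2} (from $a=a\cdot b$ and $b=b\cdot c$ one gets $a\cdot c=(a\cdot b)\cdot c=a\cdot(b\cdot c)=a\cdot b=a$, and symmetrically). Taking $I:=A/{\sim}$ and letting the $A_i$ be the blocks, I would show each block is a subalgebra: for $g\in\nu$ of arity $n\geqslant 1$ and $a_1,\dots,a_n$ in the block of $a$, \textbf{P4} gives $g(a_1,\dots,a_n)\cdot a=g(a_1\cdot a,\dots,a_n\cdot a)=g(a_1,\dots,a_n)$, while \textbf{P5} with \textbf{P2} gives $a\cdot g(a_1,\dots,a_n)=a\cdot a_1\cdots a_n=a$, so $g(a_1,\dots,a_n)\sim a$.

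For clause~(2) I would first show the stipulated relation on $I$ does not depend on representatives: if $b_0\cdot a_0=b_0$ with $a_0\in A_i$, $b_0\in A_j$, then for any $a\in A_i$, $b\in A_j$ one computes $b_0\cdot a=(b_0\cdot a_0)\cdot a=b_0\cdot(a_0\cdot a)=b_0$ and then $b\cdot a=(b\cdot b_0)\cdot a=b\cdot(b_0\cdot a)=b$, using \textbf{P2} and the equations of $\sim$. Reflexivity (\textbf{P1}), antisymmetry (the two witnessing equations just say $a\sim b$) and transitivity (\textbf{P2}) make $\leqslant$ a partial order. For the join I would check that the block of $a\cdot b$ is the least upper bound of $i$ and $j$: the identities $(a\cdot b)\cdot a=a\cdot b$ and $(a\cdot b)\cdot b=a\cdot b$ (obtained from \textbf{P1}--\textbf{P3}) give $i,j\leqslant[a\cdot b]$, and if $c\in A_l$ with $i,j\leqslant l$ then $c\cdot(a\cdot b)=(c\cdot a)\cdot b=c\cdot b=c$ gives $[a\cdot b]\leqslant l$; uniqueness of least upper bounds makes $[a\cdot b]$ independent of representatives, and by iteration the block of $a_1\cdots a_n$ is $i_1\lor\dots\lor i_n$.

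Clauses~(3) and~(4) come next. The identity $a\cdot b=a\cdot c$ for $a\in A_i$ and $b,c\in A_j$ (both sides equal $(a\cdot b)\cdot c$ via $b=b\cdot c$, $c=c\cdot b$, \textbf{P2} and \textbf{P3}) yields independence of the choice of $b$, and taking $c=x$ yields $f_{ii}=\mathrm{id}$ through \textbf{P1}. That $f_{ij}(x)=x\cdot b$ maps $A_i$ into $A_j$ follows from clause~(2) since $i\leqslant j$ means $i\lor j=j$, and \textbf{P4} makes it a homomorphism; the cocycle law $f_{ik}=f_{jk}\circ f_{ij}$ reduces, via \textbf{P2} and the independence above, to $x\cdot(b\cdot c)=x\cdot c$ for $b\in A_j$, $c\in A_k$. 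Hence $Y$ is a directed system.

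It remains to prove $\PL(Y)=\A$, which I expect to be the main obstacle. The two algebras share the universe $\bigcup_i A_i=A$, so only the operations must be matched. Fixing $g\in\nu$ and $a_k\in A_{i_k}$, set $j:=i_1\lor\dots\lor i_n$ and $b:=a_1\cdot a_2\cdots a_n\in A_j$. By \textbf{P4} and the independence from clause~(3), $g^{\A}(a_1,\dots,a_n)\cdot b=g^{\A}(a_1\cdot b,\dots,a_n\cdot b)=g^{\A_j}(f_{i_1j}(a_1),\dots,f_{i_nj}(a_n))=g^{\PL(Y)}(a_1,\dots,a_n)$, so everything reduces to showing $g^{\A}(a_1,\dots,a_n)\cdot b=g^{\A}(a_1,\dots,a_n)$. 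Expanding the left-hand side again by \textbf{P4} is circular, so instead I would apply \textbf{P5} with its left argument taken to be $w:=g^{\A}(a_1,\dots,a_n)$ itself: this gives $w\cdot g(a_1,\dots,a_n)=w\cdot a_1\cdot a_2\cdots a_n$, whose left side is $w\cdot w=w$ by \textbf{P1} and whose right side regroups to $w\cdot b$ by \textbf{P2}. Thus $w=w\cdot b$, which completes the identification of $g^{\A}$ with $g^{\PL(Y)}$ and hence the proof.
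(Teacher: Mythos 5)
Your proof is correct, but there is nothing in the paper to compare it against: the paper states this result without proof, quoting it as Theorem~II of P\l onka's original article \cite{Plo67}. Your argument is a sound, self-contained reconstruction. The routine parts check out: transitivity of $\sim$ and the subalgebra property in (1) follow from \textbf{P1}--\textbf{P5} exactly as you compute; the representative-independence of $\leq$, antisymmetry (the witnessing equations assert $a\sim b$), and the identification of $[a\cdot b]$ as the join in (2) are all correct; and the independence of $f_{ij}$ from the choice of $b$, the map's landing in $A_{i\lor j}=A_j$, the homomorphism property via \textbf{P4}, and the cocycle law via $x\cdot(b\cdot c)=x\cdot c$ in (3) are handled properly. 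The one genuinely delicate point is clause (4), where, as you observe, expanding $g^{\A}(a_1,\dots,a_n)\cdot b$ by \textbf{P4} is circular; your resolution -- instantiating \textbf{P5} with left argument $w\coloneqq g^{\A}(a_1,\dots,a_n)$, so that $w = w\cdot w = w\cdot g(a_1,\dots,a_n)=w\cdot a_1\cdots a_n = w\cdot b$ by \textbf{P1}, \textbf{P5} and \textbf{P2} -- is exactly the right move, and it is essentially the trick in P\l onka's own proof. With $w=w\cdot b$ established, $g^{\A}$ and $g^{\PL(Y)}$ coincide and the theorem follows.
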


It is worth remarking that the construction of Plonka sums preserves the validity of the so-called \textit{regular identities}, i.e. identities of the form $ \varphi \thickapprox \psi $ such that $\Var(\varphi) = \Var(\psi)$ (for details, see \cite{Romanowska92, Plo67}).

\section{The left variable inclusion companion of a logic}\label{sec: definizione e primi risultati}

The definition of \emph{directed system} can be extended, as follows, to logical matrices:

\begin{definition}\label{Def:Directed-System-Matrices}
A \textit{directed system} of matrices consists of:
\benroman
\item a semilattice $I = \langle I, \lor\rangle$;
\item a family of matrices $\{ \langle\A_{i},F_{i}\rangle \}_{i \in I}$ with disjoint universes;
\item a homomorphism $f_{ij} \colon \A_{i} \to \A_{j}$ such that $f_{ij}[F_{i}] \subseteq F_{j}$, for every $i, j \in I$ such that $i \leq j$;
\eroman
moreover, $f_{ii}$ is the identity map for every $i \in I$, and if $i \leq j \leq k$, then $f_{ik} = f_{jk} \circ f_{ij}$.
\end{definition}

Given directed system of matrices $X$ as above, we set
\[
\PL(X) \coloneqq \langle \PL(\A_{i})_{i \in I}, \bigcup_{i \in I}F_{i}\rangle.
\]
The matrix $\PL(X)$ is the \textit{P\l onka sum} of the directed system of matrices $X$. Given a class $\mathsf{M}$ of matrices, we denote by $\PL(\mathsf{M})$ the class of all P\l onka sums of directed systems of matrices in $\mathsf{M}$. The following observation is a routine computation:

\begin{lemma}\label{Lem:plonka-commutes-with-S-and-P}
$\SSS\PL(\mathsf{M}) \subseteq \PL(\SSS(\mathsf{M}))$ and $\PPP\PL(\mathsf{M}) \subseteq \PL(\PPP(\mathsf{M}))$, for every class of matrices $\mathsf{M}$.
\end{lemma}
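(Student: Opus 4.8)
The plan is to verify both inclusions by displaying, in each case, an explicit directed system whose P\l onka sum is the matrix at hand, and then checking that the two matrices carry literally the same operations via the defining formula for $\varphi^{\PL}$. I will treat the two inclusions separately but in the same style.

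For the first inclusion, take $\langle \B, G\rangle \in \SSS\PL(\mathsf M)$, so that $\langle \B, G\rangle$ is a submatrix of some $\PL(X) = \langle \PL(\A_i)_{i\in I}, \bigcup_{i\in I}F_i\rangle$ with every $\langle \A_i, F_i\rangle \in \mathsf M$; thus $\B \leq \PL(\A_i)_{i\in I}$ and $G = B \cap \bigcup_{i\in I}F_i$. First I would set $B_i \coloneqq B \cap A_i$ and $J \coloneqq \{ i \in I : B_i \neq \emptyset \}$, and claim that
\[
Y \coloneqq \langle \langle J, \lor\rangle, \{ \langle \B_i, B_i \cap F_i\rangle \}_{i \in J}, \{ f_{ij}|_{B_i} : i \leq j \in J\}\rangle
\]
is a directed system of submatrices of members of $\mathsf M$ with $\PL(Y) = \langle \B, G\rangle$. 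The verification splits into four points: (i) each $B_i$ is a subuniverse of $\A_i$, which is immediate since a basic operation applied to arguments lying in a single component $A_i$ is computed inside $\A_i$ (as $f_{ii}=\mathrm{id}$) and lands in $B$ because $\B$ is a subalgebra; (ii) $J$ is closed under $\lor$, obtained by applying a basic operation to a tuple of elements drawn from $B_i$ and $B_j$, whose value lies in $A_{i\lor j}\cap B = B_{i \lor j}$; (iii) the maps $f_{ij}$ carry $B_i$ into $B_j$, so that $Y$ is genuinely a directed system; and (iv) the P\l onka sum of $Y$ has universe $\bigcup_{i\in J}B_i = B$, designated set $\bigcup_{i\in J}(B_i\cap F_i) = G$, and, by the formula for $\varphi^{\PL}$, exactly the operations inherited by $\B$ from $\PL(\A_i)_{i\in I}$.

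For the second inclusion, take a family $\{\PL(X_t)\}_{t\in T}$ with each $X_t = \langle \langle I_t,\lor\rangle, \{\langle\A^t_i, F^t_i\rangle\}_{i\in I_t}, \{f^t_{ij}\}\rangle$ a directed system over $\mathsf M$. I would work over the product semilattice $I \coloneqq \prod_{t\in T}I_t$ with coordinatewise join, assign to each $s = (s_t)_{t\in T} \in I$ the component $\prod_{t\in T}\langle \A^t_{s_t}, F^t_{s_t}\rangle \in \PPP(\mathsf M)$, and to each pair $s \leq s'$ the transition map $\prod_{t\in T}f^t_{s_t s'_t}$. After checking that this data forms a directed system $Y$, it remains to identify $\PL(Y)$ with $\prod_{t\in T}\PL(X_t)$: the canonical bijection between $\bigsqcup_{s \in I}\prod_{t}A^t_{s_t}$ and $\prod_{t}\bigcup_{i\in I_t}A^t_i$ (available because, for each fixed $t$, the $A^t_i$ are pairwise disjoint) matches designated sets and, once again by the formula for $\varphi^{\PL}$ computed coordinatewise, matches operations.

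The step I expect to require the most care is (iii): a priori, $f_{ij}(a)$ for $a\in B_i$ need not belong to $B$. The natural tool is the description of the transition maps of a P\l onka sum through its partition function, namely $f_{ij}(a) = a \cdot b$ for any $b \in A_j$; combined with closure of $\B$ under the basic operations this forces $f_{ij}[B_i] \subseteq B_j$ precisely when $\B$ is itself closed under the partition function, as happens whenever that function is term-definable (the situation of the logics of primary interest here). Everything else is the expected bookkeeping: the laborious part is purely notational, amounting to evaluating $\varphi^{\PL}$ on both sides of each claimed identity and reading off equality component- or coordinate-wise.
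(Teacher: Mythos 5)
The paper itself offers no proof of this lemma (it is dismissed as ``a routine computation''), so your proposal can only be judged on its own merits. Your treatment of the second inclusion is correct and complete: the product semilattice, the coordinatewise components and transition maps, and the identification of $\PL(Y)$ with $\prod_{t\in T}\PL(X_t)$ (the components $\prod_t A^t_{s_t}$ really do partition $\prod_t\bigcup_i A^t_i$, and operations and filters match coordinatewise) is exactly the routine argument intended, and it works for every language and every class $\mathsf{M}$.

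The genuine gap is in the first inclusion, at your step (iii), and your proposed repair does not close it. The formula $f_{ij}(a)=a\cdot b$ (for $b\in A_j$) describes the transition maps of the \emph{canonical} decomposition produced by Theorem~\ref{th: Teorema di Plonka}; it is false for an arbitrary directed system presenting the same algebra, because the members of $\mathsf{M}$ may themselves decompose nontrivially under $\cdot$. Concretely, work in the language with one binary operation $\lor$ and recall that $\lor$ is a term-definable partition function on every semilattice ($\mathbf{P1}$--$\mathbf{P5}$ are immediate). Let $\A_1=\{0_1<1_1\}$ and $\A_2=\{0_2<1_2\}$ be disjoint two-element semilattices with empty filters, $f_{12}$ the isomorphism, and $X$ the resulting directed system over the two-element chain. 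Then $B=\{0_1,1_1,1_2\}$ is a subuniverse of $\PL(X)$ (a three-element chain), so $\B$ is certainly closed under the partition function; yet $f_{12}(0_1)=0_2\notin B$, and $0_1\lor 1_2=1_2\neq f_{12}(0_1)$, so both your formula and the conclusion $f_{ij}[B_i]\subseteq B_j$ fail. The directed system $Y$ you build simply does not exist here, although the lemma's conclusion still holds for this $\B$ --- via a \emph{finer} decomposition (three singletons), not via restriction of $X$. The correct route is to forget $X$ and apply Theorem~\ref{th: Teorema di Plonka} to $\B$ itself: since $a\cdot^{\PL}b\in A_{i\lor j}$ whenever $a\in A_i$, $b\in A_j$ (here one uses that both variables really occur in $x\cdot y$), each canonical class of $\B$ lies inside a single $A_i$ and is therefore a submatrix of a member of $\mathsf{M}$, and Theorem~\ref{th: Teorema di Plonka}(4) gives $\PL$ of the canonical system $=\B$.

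Two further hypotheses are then still needed, and neither appears in the statement: your step (ii) requires an operation (or a term) in which at least two variables occur; and for the canonical system to be a directed system of \emph{matrices} one must check that its transition maps $b\mapsto b\cdot c$ preserve the designated sets, which amounts to the matrices validating the rule $x\rhd x\cdot y$ --- precisely what the paper secures in Theorem~\ref{th: plonka sum of matrices} from the hypothesis $x\vdash x\cdot y$, and which fails for arbitrary filters. These assumptions are not cosmetic: without them the inclusion $\SSS\PL(\mathsf{M})\subseteq\PL(\SSS(\mathsf{M}))$ is actually false (for instance, in the language of one unary operation, take $\A_1$ an undesignated two-cycle mapped onto a designated two-cycle inside $\A_2$, which also contains an undesignated three-cycle; the submatrix consisting of the undesignated two- and three-cycles admits no decomposition over $\SSS(\mathsf{M})$, since two- and three-cycles have no homomorphisms between them and the filter mismatch blocks merging). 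Both assumptions do hold in the lemma's only application, Corollary~\ref{Cor:equivalential-partial-result}, where $\mathsf{M}$ is built from $\Modstar(\vdash)$ for a logic with a partition function. So your closing hedge about ``the logics of primary interest'' is in fact a necessary hypothesis, and even under it the proof of (iii) has to proceed through the canonical decomposition of $\B$, not through the restricted system you propose.
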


\begin{definition}\label{def: regolarizzazione}
Let $\vdash$ be a logic. The \textit{left variable inclusion} companion of $\vdash$ is the relation $\vdash^{l} \subseteq \mathcal{P}(Fm) \times Fm$ defined for every $\Gamma\cup\{\varphi\}\subseteq Fm$ as
\[
\Gamma \vdash^{l} \varphi \Longleftrightarrow \text{ there is }\Gamma' \subseteq \Gamma \text{ s.t. }\Var(\Gamma') \subseteq \Var(\varphi) \text{ and }\Gamma' \vdash \varphi.
\]
\end{definition}

It is immediate to check that $\vdash^{l}$ is indeed a logic and that $\vdash^{l}\subseteq\?\?\?\vdash$. We will often refer to the left variable inclusion of a logic simply as its variable inclusion companion.

\begin{example}\label{ex: PWK}
Let $\vdash$ be propositional classical logic. Then $\vdash^{l}$ is the logic known as Paraconsistent Weak Kleene, \PWK for short, originally introduced in \cite{Kleene}. This logic is equivalently defined, \emph{syntactically}, by imposing the variable inclusion constrain, as in Definition \ref{def: regolarizzazione}, to classical logic or, \emph{semantically} via the so-called \emph{weak Kleene tables} with two of the three truth values as designated (see \cite{Bonzio16,CiuniCarrara}).  
\end{example}

\begin{example}
The left variable inclusion companions of Strong Kleene logic and of  the logic of Paradox (introduced in \cite{Priestfirst}) have been introduced and discussed in \cite{Szmuc}. They are semantically defined, by adding a nonsensical truth value to the (single) matrix inducing Strong Kleene and the logic of Paradox, respectively. 

\end{example}

In \cite{Bonzio16}, it is shown that an algebraic semantics for \PWK is obtained via P\l onka sums of Boolean algebras. We shall show that this idea can be generalized to the variable inclusion companion of any logic $\vdash$.

\begin{lemma}\label{lem:soundness}
Let $\vdash$ be a logic and $X$ be a directed system of models of $\vdash$. Then $\PL(X)$ is a model of $\vdash^{l}$.
\end{lemma}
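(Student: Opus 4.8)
The plan is to unwind what it means for a matrix to be a model of $\vdash^{l}$ and to reduce the statement to a single application of the hypothesis that the components are models of $\vdash$, after transporting the relevant evaluation into one fixed component. Write $\PL(X) = \langle \PL(\A_i)_{i\in I}, \bigcup_{i\in I}F_i\rangle$ and suppose $\Gamma \vdash^{l}\varphi$. By Definition \ref{def: regolarizzazione} there is $\Gamma' \subseteq \Gamma$ with $\Var(\Gamma')\subseteq\Var(\varphi)$ and $\Gamma'\vdash\varphi$. Fixing a homomorphism $h\colon\Fm\to\PL(\A_i)_{i\in I}$ with $h[\Gamma]\subseteq\bigcup_{i\in I}F_i$, it suffices to show $h(\varphi)\in\bigcup_{i\in I}F_i$, and note that a fortiori $h[\Gamma']\subseteq\bigcup_{i\in I}F_i$.

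The key construction is to push the evaluation $h$ up into a single component. Since the language contains no constants, $\Var(\varphi)=\{x_1,\dots,x_n\}$ is nonempty; let $i_k\in I$ be the unique index with $h(x_k)\in A_{i_k}$, and set $j\coloneqq i_1\lor\dots\lor i_n$. Define a homomorphism $g\colon\Fm\to\A_j$ by $g(x_k)\coloneqq f_{i_k j}(h(x_k))$ for $1\le k\le n$ and arbitrarily on the remaining variables (possible since $A_j\neq\emptyset$). By the definition of the operations in a P\l onka sum one computes directly that
\[
h(\varphi)=\varphi^{\A_j}(f_{i_1 j}(h(x_1)),\dots,f_{i_n j}(h(x_n)))=g(\varphi),
\]
so it is enough to prove $g(\varphi)\in F_j$.

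I would then verify that $g$ maps $\Gamma'$ into $F_j$, which is exactly the step where the left variable inclusion condition is used. Fix $\gamma\in\Gamma'$. Since $\Var(\gamma)\subseteq\Var(\varphi)=\{x_1,\dots,x_n\}$, the join $j_\gamma$ of the indices of the variables occurring in $\gamma$ satisfies $j_\gamma\leq j$, so $f_{j_\gamma j}$ is defined; moreover $h(\gamma)\in A_{j_\gamma}$, and as the components are pairwise disjoint and $h(\gamma)\in\bigcup_{i\in I}F_i$, in fact $h(\gamma)\in F_{j_\gamma}$. The crucial identity is
\[
g(\gamma)=f_{j_\gamma j}(h(\gamma)),
\]
which follows from the coherence conditions $f_{i_k j}=f_{j_\gamma j}\circ f_{i_k j_\gamma}$ of the directed system together with the fact that each $f_{j_\gamma j}$ is a homomorphism. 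Because the system is one of matrices, $f_{j_\gamma j}[F_{j_\gamma}]\subseteq F_j$, whence $g(\gamma)\in F_j$; thus $g[\Gamma']\subseteq F_j$. Finally $g\colon\Fm\to\A_j$ is a homomorphism, $\langle\A_j,F_j\rangle$ is a model of $\vdash$, and $\Gamma'\vdash\varphi$, so $g(\varphi)\in F_j\subseteq\bigcup_{i\in I}F_i$, as required.

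The main obstacle I anticipate is the bookkeeping around the identity $g(\gamma)=f_{j_\gamma j}(h(\gamma))$: one must keep careful track of which component each $h(x_k)$ and each $h(\gamma)$ lives in and repeatedly invoke the transitivity $f_{ik}=f_{jk}\circ f_{ij}$ of the transition maps, since a variable of $\gamma$ lying in $A_{i_k}$ is first sent to $A_{j_\gamma}$ and only then to $A_j$. Conceptually the whole argument hinges on the inclusion $\Var(\Gamma')\subseteq\Var(\varphi)$: this is precisely what guarantees $j_\gamma\leq j$ for every $\gamma\in\Gamma'$, so that the entire premise set can be transported into the single component $A_j$ where $\vdash$ may be applied. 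Without the variable inclusion restriction a premise could land in a component incomparable with $j$, and the push-up would fail — which is the exact sense in which $\vdash^{l}$, and not $\vdash$ itself, is sound for P\l onka sums.
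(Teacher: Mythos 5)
Your proposal is correct and follows essentially the same route as the paper's proof: decompose $\Gamma\vdash^{l}\varphi$ into $\Gamma'\vdash\varphi$ with $\Var(\Gamma')\subseteq\Var(\varphi)$, push the evaluation into the component $\A_{j}$ indexed by the join of the indices of $\Var(\varphi)$, show via the transition maps and their coherence that the premises land in $F_{j}$, and apply the hypothesis that $\langle\A_{j},F_{j}\rangle$ is a model of $\vdash$. Your identity $g(\gamma)=f_{j_{\gamma}j}(h(\gamma))$ is exactly the computation the paper carries out for each $\delta\in\Delta$, and your explicit remarks (nonemptiness of $\Var(\varphi)$ from the absence of constants, disjointness of the components) only make explicit details the paper leaves implicit.
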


\begin{proof}
Suppose that $\Gamma \vdash^{l}\varphi$ and consider a homomorphism $v \colon \Fm \to \PL(\A_{i})_{i \in I}$ such that $v[\Gamma] \subseteq \bigcup_{i \in I}F_{i}$. By the definition of $\vdash^{l}$, there exists $\Delta\subseteq\Gamma$ such that $\Var(\Delta)\subseteq\Var(\varphi)$ and $\Delta\vdash\varphi$. Consider an enumeration $\Var(\varphi) = \{ x_{1}, \dots, x_{n}\}$. There are $i_{1}, \dots, i_{n} \in I$ such that $v(x_{1}) \in A_{i_{1}}, \dots, v(x_{n}) \in A_{i_{n}}$. We set $j \coloneqq i_{1} \lor \dots \lor i_{n}$.

Now, consider a homomorphism $g \colon \Fm \to \A_{j}$ such that
\[
g(x_{m}) = f_{i_{m}j}(v(x_{m})) \text{, for every }m \leq n.
\]
We claim that $g[\Delta] \subseteq F_{j}$. To prove this, consider an arbitrary formula $\delta \in \Delta$. Since $\Var(\Delta) \subseteq \{ x_{1}, \dots, x_{n} \}$, we can assume that $\Var(\delta) = \{x_{m_1}, \dots, x_{m_k}\}\subseteq \{ x_{1}, \dots, x_{n} \}$ for some $k \leq n$. Set $l\coloneqq i_{m_1} \lor \dots \lor i_{m_k}$. From the definition of $\PL(X)$ we have that 
\[
v(\delta) = \delta^{\PL}(v(x_{m_1}), \dots, v(x_{m_k})) = \delta^{\A_{l}}(f_{i_{m_1}l}(v(x_{m_1})), \dots, f_{i_{m_k}l}(v(x_{m_k}))). 
\]
Since $v(\delta)\in \bigcup_{i\in I}F_i$, this implies that 
\begin{equation}\label{Eq:displayplonka}
\delta^{\A_{l}}(f_{i_{m_1}l}(v(x_{m_1})), \dots, f_{i_{m_k}l}(v(x_{m_k}))) \in F_{l}.
\end{equation}
Now observe that $l \leq j$. Therefore there is a homomorphism $f_{lj} \colon \A_{l} \to \A_{j}$ such that $f_{lj}[F_{l}] \subseteq F_{j}$. Together with (\ref{Eq:displayplonka}), this implies that
\begin{align*}
g(\delta) =&\delta^{\A_{j}}(f_{i_{m_1}j}(v(x_{m_1})), \dots, f_{i_{m_k}j}(v(x_{m_k})))\\
 = &\delta^{\A_{j}}(f_{lj} \circ f_{i_{m_1}l}(v(x_{m_1})), \dots, f_{lj} \circ f_{i_{m_k}l}(v(x_{m_k})))\\
=&f_{lj}\delta^{\A_{l}}(f_{i_{m_1}l}(v(x_{m_1})), \dots, f_{i_{m_k}l}(v(x_{m_k})))\\
\in &\?\?  f_{lj}[F_{l}]  \subseteq F_{j}.
\end{align*}
This establishes our claim.

Recall that $\Delta \vdash \varphi$. Since $\langle \A_{j}, F_{j}\rangle$ is a model of $\vdash$ and by the claim $g[\Delta] \subseteq F_{j}$, we conclude that $g(\varphi) \in F_{j}$. But this means that
\begin{align*}
v (\varphi) &= \varphi^{\PL}(v(x_{1}), \dots, v(x_{n}))\\
&= \varphi^{\A_{j}}(f_{i_{1}j}(v(x_{1})), \dots, f_{i_{n}j}(v(x_{n})))\\
&= g(\varphi) \in \?\? F_{j} \subseteq \bigcup_{i \in I}F_{i}.
\end{align*}
Hence we conclude that $\PL(X)$ is a model of $\vdash^{l}$ as desired.
\end{proof}

Recall that $\mathbf{1}$ is the trivial algebra. The following construction originates in \cite{Lakser72}. Given an algebra $\A$, there is always a directed system of algebras given by $\A$ and $\boldsymbol{1}$ equipped with the identity endomorphisms and the unique homomorphism $f \colon \A \to \boldsymbol{1}$. We denote by $\A \oplus \boldsymbol{1}$ the P\l onka sum of this directed system. Observe that $\A \oplus \boldsymbol{1}$ is the algebra with universe $A \cup \{ 1 \}$ and basic operations $f$ defined as follows:
\[
f^{\A\oplus \boldsymbol{1}}(a_{1}, \dots, a_{n})\coloneqq \left\{ \begin{array}{ll}
 f^{\A}(a_{1}, \dots, a_{n})& \text{if $a_{1}, \dots, a_{n} \in A$}\\
   1 & \text{otherwise.}\\
  \end{array} \right.  
\]

Observe that the above construction can be lifted to matrices. More precisely, given an arbitrary matrix $\langle \A, F\rangle$, there is always a directed system of matrices given by $\langle \A, F\rangle$ and $\langle \boldsymbol{1}, \{ 1 \} \rangle$ equipped with the identity endomophisms and the unique homomorphism $f \colon \A \to \boldsymbol{1}$. The P\l onka sum of this system is the matrix $\langle\A \oplus \boldsymbol{1}, F \cup \{ 1 \} \rangle$.

\begin{theorem}\label{thm:completeness}
Let $\vdash$ be a logic and $\mathsf{M}$ be a class of matrices containing $\langle \boldsymbol{1}, \{ 1 \} \rangle$. If $\vdash$ is complete w.r.t.\ $\mathsf{M}$, then $\vdash^{l}$ is complete w.r.t.\ $\PL(\mathsf{M})$.
\end{theorem}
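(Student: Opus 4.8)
The plan is to establish the two inclusions $\vdash^{l}\,\subseteq\,\vdash_{\PL(\mathsf{M})}$ and $\vdash_{\PL(\mathsf{M})}\,\subseteq\,\vdash^{l}$ separately. The first (soundness) inclusion is almost immediate. Since $\vdash$ is complete w.r.t.\ $\mathsf{M}$, every matrix in $\mathsf{M}$ is in particular a model of $\vdash$, so every directed system of matrices drawn from $\mathsf{M}$ is a directed system of models of $\vdash$. By Lemma \ref{lem:soundness}, its P\l onka sum is a model of $\vdash^{l}$. Hence every matrix in $\PL(\mathsf{M})$ is a model of $\vdash^{l}$, which yields $\vdash^{l}\,\subseteq\,\vdash_{\PL(\mathsf{M})}$.

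The second (completeness) inclusion is the substantive half, and I would prove it by contraposition. Suppose $\Gamma \not\vdash^{l} \varphi$ and set $\Gamma_{\varphi} \coloneqq \{ \gamma \in \Gamma : \Var(\gamma) \subseteq \Var(\varphi) \}$, the largest subset of $\Gamma$ whose variables are confined to $\Var(\varphi)$. Since $\Var(\Gamma_{\varphi}) \subseteq \Var(\varphi)$, the definition of $\vdash^{l}$ forces $\Gamma_{\varphi} \not\vdash \varphi$: otherwise $\Gamma_{\varphi}$ would witness $\Gamma \vdash^{l}\varphi$. By completeness of $\vdash$ w.r.t.\ $\mathsf{M}$, there are a matrix $\langle \A, F\rangle \in \mathsf{M}$ and a homomorphism $h \colon \Fm \to \A$ with $h[\Gamma_{\varphi}] \subseteq F$ and $h(\varphi) \notin F$.

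The key step is to feed this counterexample into the Lakser-style matrix $\langle \A \oplus \boldsymbol{1}, F \cup \{ 1 \}\rangle$ introduced just before the theorem: it is the P\l onka sum of the two-element directed system built from $\langle \A, F\rangle$ and $\langle \boldsymbol{1}, \{ 1 \}\rangle$, and it lies in $\PL(\mathsf{M})$ exactly because $\mathsf{M}$ is assumed to contain $\langle \boldsymbol{1}, \{ 1 \}\rangle$. I would then define a homomorphism $v \colon \Fm \to \A \oplus \boldsymbol{1}$ by $v(x) \coloneqq h(x)$ for $x \in \Var(\varphi)$ and $v(x) \coloneqq 1$ otherwise. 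Exploiting that $1$ is infectious in the P\l onka sum, I split $\Gamma$ into two cases: if $\gamma \in \Gamma_{\varphi}$, then all arguments lie in the bottom component $A$, the operation is computed inside $\A$, and $v(\gamma) = h(\gamma) \in F$; if $\gamma \in \Gamma \setminus \Gamma_{\varphi}$, then some variable of $\gamma$ is sent to $1$ (such a variable exists since, the language being constant-free, $\Var(\gamma) \neq \emptyset$), the supremum index is pushed to the top, and $v(\gamma) = 1$. In either case $v(\gamma) \in F \cup \{ 1 \}$, so $v[\Gamma] \subseteq F \cup \{ 1 \}$. For the conclusion, all variables of $\varphi$ live in the bottom component, so by the formula-level computation rule for P\l onka sums $v(\varphi) = h(\varphi) \in A$; hence $v(\varphi) \notin F$ and, the universes being disjoint, $v(\varphi) \neq 1$, so $v(\varphi) \notin F \cup \{ 1 \}$. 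This witnesses $\Gamma \not\vdash_{\PL(\mathsf{M})} \varphi$, completing the contrapositive.

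I expect the main obstacle to be conceptual rather than computational: recognizing that placing the variables of $\varphi$ in $\A$ and all other variables in the infectious trivial component is precisely what simulates the left variable inclusion side condition. The only points demanding genuine care are checking that the top element really absorbs every formula that mentions an out-of-scope variable, and that the conclusion $\varphi$ is evaluated entirely within $\A$; both are direct consequences of the definition of the P\l onka sum and of the formula-level computation rule $\varphi^{\PL}(a_{1}, \dots, a_{n}) = \varphi^{\A_{j}}(f_{i_{1}j}(a_{1}), \dots, f_{i_{n}j}(a_{n}))$ established for P\l onka sums of algebras.
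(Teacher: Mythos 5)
Your proposal is correct and follows essentially the same route as the paper's proof: the same partition of $\Gamma$ (your $\Gamma_{\varphi}$ is the paper's $\Gamma^{+}$), the same counterexample matrix $\langle \A \oplus \boldsymbol{1}, F \cup \{ 1 \}\rangle$, and the same valuation sending the variables outside $\Var(\varphi)$ to the infectious element $1$. One tiny nitpick: in the case $\gamma \in \Gamma \setminus \Gamma_{\varphi}$, the variable of $\gamma$ that gets sent to $1$ exists because $\Var(\gamma) \nsubseteq \Var(\varphi)$ (by the very definition of $\Gamma_{\varphi}$), not because the language is constant-free, but this does not affect the argument.
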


\begin{proof}
In the light of Lemma \ref{lem:soundness} it will be enough to show that if $\Gamma \nvdash^{l}\varphi$, then $\Gamma \nvdash_{\PL(\mathsf{M})} \varphi$. To this end, suppose that $\Gamma \nvdash^{l}\varphi$. Define
\begin{align*}
\Gamma^{+}&\coloneqq \{ \gamma \in \Gamma : \Var(\gamma) \subseteq \Var(\varphi)\}\\
\Gamma^{-}& \coloneqq \{ \gamma \in \Gamma : \Var(\gamma)\nsubseteq \Var(\varphi)\}.
\end{align*}
Clearly $\Gamma = \Gamma^{+} \cup \Gamma^{-}$. Since $\Gamma \nvdash^{l}\varphi$, we know that $\Gamma^{+} \nvdash \varphi$. Together with the fact that $\vdash$ is complete w.r.t.\ $\mathsf{M}$, this implies that there exists a matrix $\langle \A, F\rangle \in \mathsf{M}$ and a homomorphism $v \colon \Fm \to \A$ such that $v[\Gamma^{+}] \subseteq F$ and $v(\varphi) \notin F$.

Since $\langle \A, F\rangle, \langle \boldsymbol{1}, \{ 1 \} \rangle \in \mathsf{M}$, we have that $\langle \A \oplus \boldsymbol{1}, F \cup \{ 1 \} \rangle \in \PL(\mathsf{M})$. Now, consider the homomorphism $g \colon \Fm \to \A \oplus \boldsymbol{1}$ defined for every variable $x \in \Var$ as follows:
\[
g(x)\coloneqq \left\{ \begin{array}{ll}
 v(x)& \text{if $x \in \Var(\varphi)$}\\
   1 & \text{otherwise.}\\
  \end{array} \right.  
\]
From the definition of $\A \oplus \boldsymbol{1}$ it follows that
\begin{align*}
g[\Gamma^{-}] &\subseteq \{ 1 \} \subseteq F \cup \{ 1 \}\\
g(\gamma) &= v(\gamma) \text{ for every }\gamma \in \Gamma^{+} \cup \{ \varphi \}.
\end{align*}
Together with the fact that $v[\Gamma^{+}] \subseteq F$ and $v(\varphi) \notin F$, this implies that
\[
g[\Gamma] = g[\Gamma^{+} \cup \Gamma^{-}] \subseteq F \cup \{ 1 \} \text{ and } g(\varphi) \notin F \cup \{ 1 \}.
\]
Hence we conclude that $\Gamma \nvdash_{\PL(\mathsf{M})} \varphi$ as desired.
\end{proof}
\begin{corollary}\label{cor:completeness}
Let $\vdash$ be a logic. The variable inclusion companion $\vdash^{l}$ is complete w.r.t.\ any of the following classes of matrices:
\[
\PL(\Mod(\vdash)), \quad \PL(\Modstar(\vdash)),\quad \PL(\ModS(\vdash)).
\]
\end{corollary}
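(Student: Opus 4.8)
The plan is to derive all three completeness statements as instances of Theorem~\ref{thm:completeness}. That theorem asks, for a class $\mathsf{M}$, only two things: that $\langle \boldsymbol{1}, \{ 1 \}\rangle \in \mathsf{M}$, and that $\vdash$ be complete w.r.t.\ $\mathsf{M}$. Consequently the entire argument reduces to verifying these two conditions for each of $\Mod(\vdash)$, $\Modstar(\vdash)$, and $\ModS(\vdash)$, and then invoking the theorem three times.

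First I would dispatch the membership of the trivial matrix. As recorded in the preliminaries, $\langle \boldsymbol{1}, \{ 1 \}\rangle$ is simultaneously a model, a Leibniz reduced model, and a Suszko reduced model of every logic; hence it belongs to all three classes, so the first hypothesis of Theorem~\ref{thm:completeness} is met in each case.

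Second I would invoke the standard completeness theorems of abstract algebraic logic, namely that every logic $\vdash$ is complete w.r.t.\ each of $\Mod(\vdash)$, $\Modstar(\vdash)$, and $\ModS(\vdash)$. Completeness w.r.t.\ $\Mod(\vdash)$ is essentially immediate: soundness gives $\vdash\, \subseteq\, \vdash_{\Mod(\vdash)}$, while the Lindenbaum--Tarski matrix $\langle \Fm, \mathrm{Cn}_{\vdash}(\Gamma)\rangle$ witnesses every non-consequence and supplies the converse. Completeness w.r.t.\ $\Modstar(\vdash)$ follows because quotienting any model by its Leibniz congruence produces a Leibniz reduced model inducing the same logic. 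Finally, completeness w.r.t.\ $\ModS(\vdash)$ follows from the sandwich $\Modstar(\vdash) \subseteq \ModS(\vdash) \subseteq \Mod(\vdash)$ (the right inclusion yielding soundness $\vdash\, \subseteq\, \vdash_{\ModS(\vdash)}$, the left inclusion together with completeness w.r.t.\ $\Modstar(\vdash)$ yielding the reverse inequality); alternatively one reads it off the identity $\ModS(\vdash) = \PSD\Modstar(\vdash)$, since passing to subdirect products does not alter the induced logic. I would simply cite Font's book for these foundational facts rather than reproving them.

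With both hypotheses verified for each of the three classes, three applications of Theorem~\ref{thm:completeness} give that $\vdash^{l}$ is complete w.r.t.\ $\PL(\Mod(\vdash))$, w.r.t.\ $\PL(\Modstar(\vdash))$, and w.r.t.\ $\PL(\ModS(\vdash))$, as required. The only genuinely substantive ingredient is the completeness of $\vdash$ with respect to its Suszko reduced models; everything else is bookkeeping, and no new computation beyond Theorem~\ref{thm:completeness} is needed.
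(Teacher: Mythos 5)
Your proposal is correct and follows exactly the paper's own route: the paper likewise observes that each of $\Mod(\vdash)$, $\Modstar(\vdash)$, $\ModS(\vdash)$ contains $\langle \boldsymbol{1}, \{1\}\rangle$ and that $\vdash$ is complete w.r.t.\ each of them, and then applies Theorem~\ref{thm:completeness}; the only difference is that you spell out the standard completeness facts (Lindenbaum--Tarski matrices, Leibniz quotients, $\ModS(\vdash) = \PSD\Modstar(\vdash)$) that the paper takes for granted.
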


\begin{proof}
Observe that $\vdash$ is complete w.r.t.\ any of the classes $ \Mod(\vdash)$, $\Modstar(\vdash)$, $\ModS(\vdash) $. Moreover any of these classes contains the (trivial) matrix $\langle \boldsymbol{1}, \{ 1 \} \rangle$. Thus we can apply Theorem \ref{thm:completeness}.
\end{proof}

\section{Logics with a partition function and axiomatizations}\label{sec: logics with p.f.}

\begin{definition}\label{def: logic with p-func.}
A logic $\vdash$ has a \emph{partition function} if there is a formula $x\cdot y$, in which the variables $x$ and $y$ really occur, such that $x \vdash x\cdot y$ and the operation $\cdot^{\A}$ is a partition function for every $\A\in\Alg(\vdash)$.
 In this case, $x \cdot y$ is a \emph{partition function} for $\vdash$.
\end{definition}

\begin{remark}\label{Rem:SyntacticDefinitionPartitionFunction}
By Lemma \ref{lemma su equazioni p-function}, the above Definition can be rephrased 
in purely logical terms, by requiring that $x\vdash x\cdot y$ and that
\[
\varphi(\epsilon,\vec{z}\?\?)\sineq\varphi(\delta,\vec{z}\?\?)\text{ for every formula }\varphi(v,\vec{z}\?\?), 
\]
for every identity of the form $\epsilon \thickapprox \delta$ in $\mathbf{P1.}, \dots,  \mathbf{P5.}$
\qed
\end{remark}

\begin{example}
Logics with a partition function abound in the literature. Indeed, it is easy to check that the term $x\cdot y\coloneqq x\land(x\lor y)$ is a partition function for every logic $\vdash$ such that every algebra in $\Alg(\vdash)$ has a lattice reduct. Such examples include all modal and substructural logics. On the other hand, $x\cdot y\coloneqq (y \to y) \to x$ is a partition function  for all logics $\vdash$ such that $\Alg(\vdash)$ has a Hilbert algebra reduct \cite{Di65}.
\qed
\end{example}

Remarkably, the presence of a partition function is inherited by the variable inclusion companion of a logic.

\begin{lemma}\label{lemma: p.f. della regolarizzazione}
Let $\vdash$ be a logic. The operation $\cdot$ is a partition function for $\vdash$ if and only if it is a partition function for $\vdash^{l}$.
\end{lemma}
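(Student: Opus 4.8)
The plan is to reduce both conditions to the purely syntactic form given in Remark \ref{Rem:SyntacticDefinitionPartitionFunction}, and then to exploit the fact that the identities $\mathbf{P1.}, \dots, \mathbf{P5.}$ are all \emph{regular}, in the sense that the two sides of each share the same variables. By Remark \ref{Rem:SyntacticDefinitionPartitionFunction}, applied once to $\vdash$ and once to $\vdash^{l}$, the claim reduces to showing that the requirements ``$x \vdash x \cdot y$'' and ``$\varphi(\epsilon, \vec{z}\?\?) \sineq \varphi(\delta, \vec{z}\?\?)$ for every formula $\varphi(v, \vec{z}\?\?)$ and every $\epsilon \thickapprox \delta$ among $\mathbf{P1.}, \dots, \mathbf{P5.}$'' hold for $\vdash$ exactly when their counterparts hold for $\vdash^{l}$.

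The engine of the argument is the following elementary observation: for all formulas $\alpha, \beta$ with $\Var(\alpha) \subseteq \Var(\beta)$, we have $\alpha \vdash \beta$ if and only if $\alpha \vdash^{l} \beta$. Indeed, the right-to-left implication is immediate from $\vdash^{l} \subseteq\?\? \vdash$, while for the converse one takes $\Gamma' = \{ \alpha \}$ in Definition \ref{def: regolarizzazione}, the variable-inclusion side condition $\Var(\alpha) \subseteq \Var(\beta)$ being satisfied by hypothesis.

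With this in hand I would dispatch the two requirements in turn. For the first, since $x$ and $y$ really occur in $x \cdot y$ we have $\Var(x) = \{ x \} \subseteq \{ x, y \} = \Var(x \cdot y)$, so the observation yields $x \vdash x \cdot y$ iff $x \vdash^{l} x \cdot y$. For the second, the crucial point is regularity: each $\epsilon \thickapprox \delta$ among $\mathbf{P1.}, \dots, \mathbf{P5.}$ satisfies $\Var(\epsilon) = \Var(\delta)$, whence $\Var(\varphi(\epsilon, \vec{z}\?\?)) = \Var(\varphi(\delta, \vec{z}\?\?))$ for every $\varphi(v, \vec{z}\?\?)$. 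The variable-inclusion hypothesis of the observation is thus met in \emph{both} directions, so each of the two single-premise entailments making up $\varphi(\epsilon, \vec{z}\?\?) \sineq \varphi(\delta, \vec{z}\?\?)$ holds for $\vdash$ iff it holds for $\vdash^{l}$. Combining the two cases gives the lemma. The one step deserving care is precisely the verification that $\mathbf{P1.}, \dots, \mathbf{P5.}$ are regular, since this is what forces the variable sets of the compared formulas to coincide and makes the side condition in Definition \ref{def: regolarizzazione} vacuous in both directions; everything else is routine.
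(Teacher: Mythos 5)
Your proposal is correct and follows essentially the same route as the paper's proof: both reduce the notion of partition function to the syntactic conditions of Remark \ref{Rem:SyntacticDefinitionPartitionFunction}, observe that the witnessing inferences $\varphi \vdash \psi$ satisfy $\Var(\varphi) \subseteq \Var(\psi)$ (thanks to the regularity of $\mathbf{P1.}, \dots, \mathbf{P5.}$ and the fact that $x$ and $y$ really occur in $x \cdot y$), so that they transfer to $\vdash^{l}$, and handle the converse via the inclusion $\vdash^{l} \subseteq\?\? \vdash$. Your write-up merely makes explicit what the paper leaves implicit, namely the single-premise transfer principle and the verification that the identities are regular.
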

\begin{proof}
From Remark \ref{Rem:SyntacticDefinitionPartitionFunction} the fact that $\cdot$ is a partition function for $\vdash$ is witnessed by the validity of some inferences $\varphi\vdash\psi$ such that $\Var(\varphi)\subseteq\Var(\psi)$. Hence these inferences also hold in $\vdash^{l}$. With another application of Remark \ref{Rem:SyntacticDefinitionPartitionFunction} we conclude that $\cdot$ is a partition function for $\vdash^{l}$. 
 
The other direction follows from the inclusion $\vdash^{l}\subseteq\?\?\?\vdash$.
\end{proof}

The following result is the generalization of Theorem \ref{th: Teorema di Plonka} to the setting of logical matrices. 

\begin{theorem}\label{th: plonka sum of matrices}
Let $\vdash$ be a logic with a partition function $\cdot$, and $\langle\A,F\rangle$ be a model of $\vdash$ such that $\A\in\Alg(\vdash)$. Conditions (1-4) of Theorem \ref{th: Teorema di Plonka} hold. Moreover, setting $F_{i} \coloneqq F\cap A_{i}$ for every $i \in I$, the triple
\[
X=\langle \langle I,\leq\rangle , \{ \langle \A_{i},F_{i}\rangle\}_{i\in I}, \{ f_{ij} \! : \! i \leq j \}\rangle
\]
is a directed system of matrices such that $\PL(X)=\langle\A,F\rangle$.
\end{theorem}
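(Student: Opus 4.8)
The plan is to apply P\l onka's representation theorem (Theorem~\ref{th: Teorema di Plonka}) to the algebra $\A$ and then check that the filter $F$ distributes correctly across the resulting directed system. Since $\A\in\Alg(\vdash)$ and $\cdot$ is a partition function for $\vdash$, the operation $\cdot^{\A}$ is a partition function on $\A$; hence conditions (1--4) of Theorem~\ref{th: Teorema di Plonka} hold verbatim. In particular this yields the semilattice $\langle I,\leq\rangle$, the partition $\{A_{i}:i\in I\}$ of $A$ into subalgebras $\A_{i}$, the transition homomorphisms $f_{ij}(x)=x\cdot b$ (for $b\in A_{j}$), and the fact that the directed system of algebras $Y=\langle\langle I,\leq\rangle,\{\A_{i}\},\{f_{ij}\}\rangle$ satisfies $\PL(Y)=\A$. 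So the first assertion of the statement is immediate, and the work lies entirely in the "moreover" clause.

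It remains to verify that adding the filters $F_{i}\coloneqq F\cap A_{i}$ turns $Y$ into a directed system of matrices in the sense of Definition~\ref{Def:Directed-System-Matrices}. All the structural requirements---the semilattice index, the disjointness of the universes $A_{i}$ (immediate from the partition), the identity maps $f_{ii}$, and the coherence $f_{ik}=f_{jk}\circ f_{ij}$---are inherited directly from $Y$. Thus the only genuinely new condition to check is that each transition map respects the filters, i.e.\ $f_{ij}[F_{i}]\subseteq F_{j}$ whenever $i\leq j$. This is the main point of the proof, and the only place where both the filter $F$ and the defining property of the partition function enter. Fix $a\in F_{i}$ and $b\in A_{j}$, so that $f_{ij}(a)=a\cdot b$. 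Since $\vdash$ has a partition function we have $x\vdash x\cdot y$, and since $\langle\A,F\rangle$ is a model of $\vdash$, evaluating a homomorphism $\Fm\to\A$ at $x\mapsto a$ and $y\mapsto b$ yields $a\cdot b\in F$ from $a\in F$. As $f_{ij}$ is a homomorphism $\A_{i}\to\A_{j}$ we also have $a\cdot b\in A_{j}$, whence $f_{ij}(a)=a\cdot b\in F\cap A_{j}=F_{j}$, as required.

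Finally I would compute $\PL(X)$ directly from its definition, namely $\PL(X)=\langle\PL(\A_{i})_{i\in I},\bigcup_{i\in I}F_{i}\rangle$. The algebraic reduct equals $\PL(Y)=\A$ by Theorem~\ref{th: Teorema di Plonka}(4). For the designated set, since $\{A_{i}:i\in I\}$ partitions $A$ and $F\subseteq A$, we get $\bigcup_{i\in I}F_{i}=\bigcup_{i\in I}(F\cap A_{i})=F\cap\bigcup_{i\in I}A_{i}=F\cap A=F$. Hence $\PL(X)=\langle\A,F\rangle$, completing the argument. The only step carrying real content is the filter-preservation inclusion $f_{ij}[F_{i}]\subseteq F_{j}$; everything else is bookkeeping layered on top of the already-established algebraic theorem.
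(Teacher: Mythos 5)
Your proposal is correct and follows essentially the same route as the paper's proof: both reduce everything to the single filter-preservation condition $f_{ij}[F_{i}]\subseteq F_{j}$ and establish it by combining $x\vdash x\cdot y$ with the fact that $\langle\A,F\rangle$ is a model of $\vdash$, using Theorem~\ref{th: Teorema di Plonka} to identify $f_{ij}(a)=a\cdot^{\A}b$ and to place $a\cdot^{\A}b$ in $A_{j}$. The extra bookkeeping you include (disjointness, coherence of the $f_{ij}$, and $\bigcup_{i\in I}F_{i}=F$) is exactly what the paper leaves implicit.
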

\begin{proof}
In the light of Theorem \ref{th: Teorema di Plonka}, it will be enough to show that $f_{ij}[F_{i}]\subseteq F_{j}$ for every $i, j \in I$ such that $i\leq j$. To this end, consider $a\in F_{i}$ and $b \in A_{j}$ with $i\leq j$. Since $\cdot$ is a partition function for $\vdash$, we have $x\vdash x\cdot y$. Together with the fact that $\langle\A, F\rangle\in\Mod(\vdash)$ and $a \in F$, this implies that $a\cdot^{\A}b\in F$. Observe that $a\cdot^{\A}b \in A_{j}$ by (2) in Theorem \ref{th: Teorema di Plonka} and, therefore, that $a\cdot^{\A}b \in F_{j}$. Hence, by (3), we have that $f_{ij}(a) = a\cdot^{\A}b\in F_{j} $.
\end{proof}

\begin{definition}
Let $\vdash$ be a logic with a partition function $\cdot$, and $\langle\A,F\rangle$ be a model of $\vdash$ such that $\A\in\Alg(\vdash)$. The \emph{P\l onka fibers}
 of $\langle \A, F \rangle$ are the matrices $\{ \langle \A_{i},F_{i}\rangle\}_{i\in I}$ given in the above result.
\end{definition}

\begin{lemma}\label{lm: regolarizzazione con p.f.}
Let $\vdash$ be a finitary logic with partition function $\cdot$ and $\pair{\A, F}\in \Mod(\vdash^{l})$, with $\A\in\Alg(\vdash^{l})$. Then, the P\l onka fibers of $\pair{\A, F} $ are models of $\vdash$.
\end{lemma}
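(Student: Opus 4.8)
The plan is to deduce, for each fiber, membership in $\Mod(\vdash)$ from the weaker fact that $\langle\A,F\rangle$ is a model of $\vdash^{l}$, using the partition function to repair the variable inclusion condition that separates $\vdash$ from $\vdash^{l}$. First I would set up the decomposition. Since $\cdot$ is a partition function for $\vdash$, Lemma~\ref{lemma: p.f. della regolarizzazione} makes it a partition function for $\vdash^{l}$ as well; as $\langle\A,F\rangle\in\Mod(\vdash^{l})$ and $\A\in\Alg(\vdash^{l})$, Theorem~\ref{th: plonka sum of matrices} (applied to $\vdash^{l}$) presents $\langle\A,F\rangle$ as $\PL(X)$ with P\l onka fibers $\langle\A_{i},F_{i}\rangle$, where each $\A_{i}$ is a subalgebra of $\A$ and $F_{i}=F\cap A_{i}$. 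These are exactly the fibers named in the statement, since the decomposition depends only on $\cdot$.

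Fix $i\in I$. As $\vdash$ is finitary, it suffices to verify the model condition for a finite inference: assume $\Gamma\vdash\varphi$ with $\Gamma=\{\gamma_{1},\dots,\gamma_{n}\}$ and let $h\colon\Fm\to\A_{i}$ satisfy $h[\Gamma]\subseteq F_{i}$; the goal is $h(\varphi)\in F_{i}$. The only reason we cannot feed this directly into $\vdash^{l}$ is that $\Gamma$ may contain variables outside $\Var(\varphi)$; collect them as $W=\Var(\Gamma)\setminus\Var(\varphi)=\{w_{1},\dots,w_{p}\}$.

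The key step is to pad the conclusion. I set $\varphi^{+}:=\varphi\cdot w_{1}\cdot w_{2}\cdots w_{p}$ (associated to the left). Because $x$ and $y$ really occur in $x\cdot y$, we get $\Var(\varphi^{+})=\Var(\varphi)\cup W\supseteq\Var(\Gamma)$; and iterating the defining rule $x\vdash x\cdot y$ together with cut gives $\varphi\vdash\varphi^{+}$, hence $\Gamma\vdash\varphi^{+}$. Now $\Var(\Gamma)\subseteq\Var(\varphi^{+})$, so $\Gamma\vdash^{l}\varphi^{+}$ by definition. Viewing $h$ as a homomorphism into $\A$ via the inclusion $\A_{i}\hookrightarrow\A$, we have $h[\Gamma]\subseteq F_{i}\subseteq F$, so the fact that $\langle\A,F\rangle$ is a model of $\vdash^{l}$ yields $h(\varphi^{+})\in F$. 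Finally, since $h$ takes values in $A_{i}$ and, by Theorem~\ref{th: Teorema di Plonka}(1), $a\cdot b=a$ whenever $a,b$ belong to the same component, an easy induction collapses $h(\varphi^{+})$ to $h(\varphi)$. Thus $h(\varphi)\in F\cap A_{i}=F_{i}$, as required.

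The main obstacle is exactly this variable inclusion mismatch, and the whole argument turns on designing the padding $\varphi^{+}$ so that it simultaneously (i) absorbs the surplus premise variables, (ii) remains $\vdash$-derivable from $\varphi$, and (iii) is indistinguishable from $\varphi$ once evaluated inside a single fiber. Finitariness of $\vdash$ is used precisely to keep $W$ finite, so that $\varphi^{+}$ is a genuine formula.
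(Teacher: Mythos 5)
Your proof is correct and takes essentially the same approach as the paper's: both use the rule $x \vdash x\cdot y$ to pad the conclusion into a formula containing all the variables of $\Gamma$ (the paper pads with the premises $\gamma_{1},\dots,\gamma_{n}$ themselves, you with the surplus variables $w_{1},\dots,w_{p}$), pass the padded inference through $\vdash^{l}$ using the model property of $\langle \A, F\rangle$, and then collapse the padding inside the fiber via the identity $a\cdot b = a$ for elements of the same component. The only cosmetic difference is that the paper handles $\Gamma=\emptyset$ as a separate case, whereas your padding degenerates gracefully when $W=\emptyset$.
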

\begin{proof}
Let $\pair{\A_i , F_i}$ be a P\l onka fiber of $\pair{\A, F} $ and $\Gamma\vdash \varphi$, with $\Gamma$ a finite set. Then consider a homomorphism $v\colon\Fm\to\A_i$ such that $v[\Gamma]\subseteq F_i$. Then, there are cases: either $\Gamma $ is empty or not. First, suppose $\Gamma=\emptyset$. Then clearly $\emptyset\vdash^{l}\varphi$. Since $\A_i$ is a subalgebra of $\A$ and $\langle \A, F \rangle$ is a model of $\vdash^{l}$, this implies that $v(\varphi) \in F \cap A_i = F_i$. Then consider the case where $\Gamma$ is non\added[id=Stef]{-}void. Then there are $\gamma_{1},\dots,\gamma_{n}\in Fm$ such that $\Gamma=\{\gamma_{1},\dots, \gamma_n\}$. Since $\cdot$ is a partition function, we have $x \vdash x \cdot y$. In particular, this implies that $\varphi\vdash\varphi\cdot(\gamma_{1}\cdot(\gamma_{2}\cdot \dots(\gamma_{n-1}\cdot\gamma_{n})\dots))$. Then $\Gamma\vdash \varphi\cdot(\gamma_{1}\cdot(\gamma_{2}\cdot \dots(\gamma_{n-1}\cdot\gamma_{n})\dots))$. Since the variable inclusion constraint holds for this inference, we obtain that
\[
\Gamma\vdash^{l} \varphi\cdot(\gamma_{1}\cdot(\gamma_{2}\cdot \dots(\gamma_{n-1}\cdot\gamma_{n})\dots)).
\]
Since $\A_i$ is a subalgebra of $\A$ and $\langle \A, F \rangle$ is a model of $\vdash^{l}$, this implies that 
\[
v(\varphi\cdot(\gamma_{1}\cdot(\gamma_{2}\cdot \dots(\gamma_{n-1}\cdot\gamma_{n})\dots)))\in A_i\cap F = F_{i}.
\]
Since $v(\varphi)$ and $v(\gamma_{1}\cdot(\gamma_{2}\cdot \dots(\gamma_{n-1}\cdot\gamma_{n})\dots)))$ belong to $A_i$, this implies that
\begin{align*}
v(\varphi) &= v(\varphi) \cdot v(\gamma_{1}\cdot(\gamma_{2}\cdot \dots(\gamma_{n-1}\cdot\gamma_{n})\dots)))\\
&=v(\varphi\cdot(\gamma_{1}\cdot(\gamma_{2}\cdot \dots(\gamma_{n-1}\cdot\gamma_{n})\dots)))
\end{align*}
and, therefore, that $v(\varphi) \in F_i$, as desired.
\end{proof}

By a \textit{Hilbert-style calculus with finite rules} we understand a (possibly infinite) set of Hilbert-style rules, each of which has finitely many premises.

\begin{definition}\label{def: hilbert calc per regolarizz.}
Let $\mathcal{H}$ be a Hilbert-style calculus with finite rules that determines a logic $\vdash$ with a partition function $\cdot$. Let $\mathcal{H}^{l}$ be the Hilbert-style calculus given by the following rules:

\begin{align}
\emptyset &\rhd \psi\tag{H1}\label{Eq:Axiom1} \\
\gamma_{1}, \dots, \gamma_{n} &\rhd \varphi\cdot(\gamma_{1}\cdot(\gamma_{2}\cdot \dots(\gamma_{n-1}\cdot\gamma_{n})\dots)) \tag{H2}\label{Eq:Axiom2}\\
x &\rhd x\cdot y\tag{H3}\label{Eq:Axiom3}\\
\chi(\epsilon, \vec{z}\?\?) \?\?\?\lhd&\rhd \chi(\delta, \vec{z}\?\?)\tag{H4}\label{Eq:Axiom4}
\end{align}
for every
\begin{enumerate}[(i)]
\item $\emptyset \rhd \psi$ rule in $\mathcal{H}$;
\item $\gamma_{1}, \dots, \gamma_{n}\rhd \varphi$ rule in $\mathcal{H}$;
\item equation $\epsilon \thickapprox \delta$ in the definition of partition function, and formula $\chi(v, \vec{z}\?\?)$.
\end{enumerate}
\end{definition}

\begin{theorem}\label{theor:HilbertCalculus}
Let $\vdash$ be a logic with partition function $\cdot$ defined by a Hilbert-style calculus with finite rules $\mathcal{H}$. Then $\mathcal{H}^{l}$ is a complete Hilbert-style calculus for $\vdash^{l}$. 
\end{theorem}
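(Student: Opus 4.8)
The plan is to show that the consequence relation $\vdash_{\mathcal{H}^{l}}$ determined by $\mathcal{H}^{l}$ coincides with $\vdash^{l}$, establishing the two inclusions separately. Since $\vdash^{l}$ is itself a substitution-invariant consequence relation, for soundness $\vdash_{\mathcal{H}^{l}}\subseteq\vdash^{l}$ it suffices to verify that every rule scheme of $\mathcal{H}^{l}$ holds in $\vdash^{l}$. Rule (H1) is immediate, as $\emptyset\vdash\psi$ forces $\emptyset\vdash^{l}\psi$; rule (H3) holds because $x\vdash x\cdot y$ comes from the partition function and $\Var(x)\subseteq\Var(x\cdot y)$, the variables $x,y$ really occurring in $x\cdot y$. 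For (H2), from $\gamma_{1},\dots,\gamma_{n}\vdash\varphi$ and $x\vdash x\cdot y$ I get, by transitivity, $\gamma_{1},\dots,\gamma_{n}\vdash\varphi\cdot(\gamma_{1}\cdot(\gamma_{2}\cdot\dots(\gamma_{n-1}\cdot\gamma_{n})\dots))$, whose conclusion contains every variable of the premises, so the left variable inclusion constraint is met. Finally (H4) is sound because each equation in $\mathbf{P1}$–$\mathbf{P5}$ is regular and valid in $\Alg(\vdash)$, so Lemma \ref{lemma su equazioni p-function} gives $\chi(\epsilon,\vec z\?\?)\sineq\chi(\delta,\vec z\?\?)$ in $\vdash$, an interderivability with identical variable sets on the two sides, hence inherited by $\vdash^{l}$.

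For completeness, $\vdash^{l}\subseteq\vdash_{\mathcal{H}^{l}}$, I would first record a tool: by (H4) together with equational reasoning, any two terms equal in every algebra carrying a partition function — equivalently, any equational consequence of $\mathbf{P1}$–$\mathbf{P5}$, by P\l onka's Theorem \ref{th: Teorema di Plonka} and completeness of equational logic — are $\vdash_{\mathcal{H}^{l}}$-interderivable. Two instances I shall use are $(\ast)$ $\sigma(t)\cdot s\sineq\tau(t)$, where $\tau$ is the substitution $x\mapsto\sigma(x)\cdot s$ (a consequence of $\mathbf{P4}$ by induction on $t$), and $(\ast\ast)$ $u\cdot s\sineq u$ whenever $\Var(s)\subseteq\Var(u)$ (valid in every P\l onka sum, since $s$ then evaluates into a fibre below that of $u$). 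Now assume $\Gamma\vdash^{l}\varphi$. By the definition of $\vdash^{l}$ and the finitarity of $\vdash$ there is a finite $\Gamma_{0}=\{\gamma_{1},\dots,\gamma_{n}\}\subseteq\Gamma$ with $\Var(\Gamma_{0})\subseteq\Var(\varphi)$ and $\Gamma_{0}\vdash\varphi$. As the rules of $\mathcal{H}$ are schematic, substituting every variable outside $\Var(\varphi)$ by a fixed variable of $\varphi$ (the language has no constants, so $\Var(\varphi)\neq\emptyset$) turns any $\mathcal{H}$-derivation of $\varphi$ from $\Gamma_{0}$ into one using only variables of $\varphi$; I fix such a derivation $\chi_{1},\dots,\chi_{k}=\varphi$ and let $\pi_{\varphi}$ denote the $\cdot$-product of all variables of $\varphi$.

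The heart of the argument is the claim that $\Gamma_{0}\vdash_{\mathcal{H}^{l}}\chi_{i}\cdot\pi_{\varphi}$ for every $i\leq k$, proved by induction along the derivation. If $\chi_{i}$ is a hypothesis $\gamma_{j}$, then (H3) gives $\gamma_{j}\vdash_{\mathcal{H}^{l}}\gamma_{j}\cdot\pi_{\varphi}$; if $\chi_{i}=\sigma(\psi)$ is an instance of an axiom, then (H1) yields $\emptyset\vdash_{\mathcal{H}^{l}}\chi_{i}$ and then (H3) yields $\chi_{i}\cdot\pi_{\varphi}$. The delicate case is $\chi_{i}=\sigma(\beta)$ obtained from a rule $\beta_{1},\dots,\beta_{q}\rhd\beta$ of $\mathcal{H}$ applied to earlier steps $\chi_{j_{l}}=\sigma(\beta_{l})$, because the induction hypothesis only supplies the premises in the multiplied form $\chi_{j_{l}}\cdot\pi_{\varphi}$. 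Here I would invoke the matching rule (H2), namely $\beta_{1},\dots,\beta_{q}\rhd\beta\cdot(\beta_{1}\cdot(\dots\cdot\beta_{q})\dots)$, under the substitution $\tau\colon x\mapsto\sigma(x)\cdot\pi_{\varphi}$: by $(\ast)$ each premise $\tau(\beta_{l})$ is interderivable with $\chi_{j_{l}}\cdot\pi_{\varphi}$, hence derivable from $\Gamma_{0}$, so the rule delivers $\tau(\beta)\cdot(\tau(\beta_{1})\cdot\dots\cdot\tau(\beta_{q}))$. Since the normalisation guarantees $\Var(\chi_{i}),\Var(\chi_{j_{l}})\subseteq\Var(\varphi)$, each of $\tau(\beta)$ and $\tau(\beta_{l})$ has variable set exactly $\Var(\varphi)$, so $(\ast\ast)$ collapses this product to $\tau(\beta)\sineq\chi_{i}\cdot\pi_{\varphi}$, completing the step. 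This coordination of (H2) with the absorption identities $(\ast)$ and $(\ast\ast)$ — possible only after normalising away the auxiliary variables so that all intermediate formulas live in the fibre of $\pi_{\varphi}$ — is the point I expect to demand the most care.

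Finally, taking $i=k$ gives $\Gamma_{0}\vdash_{\mathcal{H}^{l}}\varphi\cdot\pi_{\varphi}$, and because $\Var(\pi_{\varphi})=\Var(\varphi)$, identity $(\ast\ast)$ yields $\varphi\cdot\pi_{\varphi}\sineq\varphi$ in $\vdash_{\mathcal{H}^{l}}$; hence $\Gamma_{0}\vdash_{\mathcal{H}^{l}}\varphi$ and, by monotonicity, $\Gamma\vdash_{\mathcal{H}^{l}}\varphi$. Together with the soundness direction this proves $\vdash_{\mathcal{H}^{l}}=\vdash^{l}$, so $\mathcal{H}^{l}$ is a complete calculus for $\vdash^{l}$.
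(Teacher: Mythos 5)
Your proof is correct, and while your soundness half ($\vdash_{\mathcal{H}^{l}} \subseteq\;\vdash^{l}$) coincides with the paper's argument, your completeness half takes a genuinely different route. The paper argues semantically: it takes an arbitrary $\langle \A, F\rangle \in \ModS(\vdash_{\mathcal{H}^{l}})$, notes that (H3)--(H4) make $\cdot$ a partition function for $\vdash_{\mathcal{H}^{l}}$, decomposes the matrix as a P\l onka sum of matrices via Theorem \ref{th: plonka sum of matrices}, shows that each fibre is a model of $\vdash$ by replaying the construction of Lemma \ref{lm: regolarizzazione con p.f.}, and concludes from Lemma \ref{lem:soundness} that $\langle \A, F\rangle \in \Mod(\vdash^{l})$, whence $\vdash^{l} \subseteq\;\vdash_{\mathcal{H}^{l}}$ because every logic is complete w.r.t.\ its Suszko reduced models. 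You instead work purely proof-theoretically: you normalise an $\mathcal{H}$-derivation of $\varphi$ from a finite $\Gamma_{0}$ so that it uses only variables of $\varphi$, then transform it line by line into an $\mathcal{H}^{l}$-derivation by multiplying each step by $\pi_{\varphi}$ and coordinating (H2) with the absorption laws $(\ast)$ and $(\ast\ast)$, which you correctly identify as equational consequences of \textbf{P1}--\textbf{P5} and hence as $\vdash_{\mathcal{H}^{l}}$-interderivabilities obtained by chaining instances of (H4). This last step quietly relies on two facts worth stating explicitly: that every one-step replacement of a substitution instance of a \textbf{P}-equation inside a context is itself a substitution instance of an (H4) rule (choosing the context with parameter variables disjoint from those of the equation), and the replacement-chain formulation of Birkhoff's completeness theorem; both are true, so the gap is only expository. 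The trade-off between the two proofs: the paper's route is short because it recycles machinery already in place, and as a by-product it immediately yields Corollary \ref{cor: ModSu incluso nelle somme di Plonka dei modelli}, namely $\ModS(\vdash^{l}) \subseteq \PL(\Mod(\vdash))$, which your argument does not give; your route is self-contained and effective, exhibiting an explicit translation of $\mathcal{H}$-derivations into $\mathcal{H}^{l}$-derivations, which is information the semantic proof does not provide.
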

\begin{proof}
Let $\vdash_{\mathcal{H}^{l}}$ be the logic determined by $\mathcal{H}^{l}$. We begin by showing that $\vdash_{\mathcal{H}^{l}} \?\?\subseteq\?\? \vdash^{l}$. It will be sufficient to show that every rule in $\mathcal{H}^{l}$ holds in $\vdash^{l}$. This is clear for (\ref{Eq:Axiom1}). Moreover, the rules (\ref{Eq:Axiom3}, \ref{Eq:Axiom4}) are valid in $\vdash^{l}$, because $\cdot$ is a partition  function for $\vdash^{l}$ by Lemma \ref{lemma: p.f. della regolarizzazione}. It only remains to prove that (\ref{Eq:Axiom2}) holds in $\vdash^{l}$. To this end, consider a rule $\gamma_{1}, \dots, \gamma_{n} \rhd \varphi$ in $\mathcal{H}$. Clearly we have that $\gamma_{1}, \dots, \gamma_{n} \vdash \varphi$. Since $\cdot$ is a partition function for $\vdash$, we have $x \vdash x\cdot y$. In particular, $\varphi \vdash \varphi\cdot(\gamma_{1}\cdot(\gamma_{2}\cdot \dots(\gamma_{n-1}\cdot\gamma_{n})\dots))$. Hence we conclude that
\[
\gamma_{1}, \dots, \gamma_{n} \vdash^{l} \varphi\cdot(\gamma_{1}\cdot(\gamma_{2}\cdot \dots(\gamma_{n-1}\cdot\gamma_{n})\dots)),
\]
as desired. 

To prove $\vdash^{l} \?\? \subseteq \?\? \vdash_{\mathcal{H}^{l}}$, we reason as follows. Consider  $\langle\A, F\rangle\in\ModS(\vdash_{\mathcal{H}^{l}})$. Observe that clearly $\A \in \Alg(\vdash_{\mathcal{H}^{l}})$. Moreover, $\cdot$ is a partition function in $\vdash_{\mathcal{H}^{l}}$ by Remark \ref{Rem:SyntacticDefinitionPartitionFunction} and (\ref{Eq:Axiom3},\ref{Eq:Axiom4}). Hence we can apply Theorem \ref{th: plonka sum of matrices}, obtaining that $\langle \A, F\rangle = \PL(X)$, where $X$ is the directed system of matrices $\langle I, \{ \langle \A_{i}, F_{i}\rangle\}_{i\in I}, \{ f_{ij} \! : \! i \leq j \}\rangle$ given in the statement of Theorem \ref{th: plonka sum of matrices}. Thanks to the rules of $\mathcal{H}^{l}$ we can replicate the construction in the proof of Lemma \ref{lm: regolarizzazione con p.f.} obtaining that each fiber $\pair{\A_i, F_i}$ is a model of $\vdash$. This observation, together with the fact that $\langle \A, F\rangle = \PL(X)$ and Corollary \ref{cor:completeness}, implies that $\langle \A, F\rangle$ is a model of $\vdash^{l}$. Hence we conclude that $\ModS(\vdash_{\mathcal{H}^{l}}) \subseteq \Mod(\vdash^{l})$. This implies that $\vdash^{l} \?\? \subseteq \?\? \vdash_{\mathcal{H}^{l}}$.
 \end{proof}
 
The proof of the above result establishes the following:

 \begin{corollary}\label{cor: ModSu incluso nelle somme di Plonka dei modelli}
If $\vdash$ is a finitary logic with a partition function, then $\ModS(\vdash^{l}) \subseteq \PL (\Mod(\vdash))$.
\end{corollary}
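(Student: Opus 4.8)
The plan is to show that every Suszko reduced model of $\vdash^{l}$ can be presented as a P\l onka sum of a directed system of matrices each of which is a model of the original logic $\vdash$. This is exactly the content isolated in the second half of the proof of Theorem \ref{theor:HilbertCalculus}, now repackaged so as to make no reference to a fixed Hilbert-style presentation; the statement follows by assembling results already available.

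First I would fix an arbitrary matrix $\langle \A, F\rangle \in \ModS(\vdash^{l})$. By the definitions of $\ModS(\cdot)$ and $\Alg(\cdot)$, such a matrix is in particular a model of $\vdash^{l}$, and its algebraic reduct satisfies $\A \in \Alg(\vdash^{l})$. Since $\cdot$ is a partition function for $\vdash$, Lemma \ref{lemma: p.f. della regolarizzazione} guarantees that $\cdot$ is also a partition function for $\vdash^{l}$; in particular $\cdot^{\A}$ is a partition function on $\A$.

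Next I would invoke Theorem \ref{th: plonka sum of matrices} with $\vdash^{l}$ playing the role of $\vdash$, which is legitimate by the previous paragraph. This produces a directed system of matrices $X = \langle \langle I, \leq\rangle, \{\langle \A_{i}, F_{i}\rangle\}_{i \in I}, \{ f_{ij} : i \leq j\}\rangle$, namely the P\l onka fibers of $\langle \A, F\rangle$, together with the identification $\langle \A, F\rangle = \PL(X)$. It then remains only to check that each fiber $\langle \A_{i}, F_{i}\rangle$ is a model of the original logic $\vdash$, and this is precisely Lemma \ref{lm: regolarizzazione con p.f.}, whose hypotheses, that $\vdash$ be finitary with a partition function and that $\langle \A, F\rangle \in \Mod(\vdash^{l})$ with $\A \in \Alg(\vdash^{l})$, are all in force. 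Consequently $X$ is a directed system whose matrices all belong to $\Mod(\vdash)$, so $\langle \A, F\rangle = \PL(X) \in \PL(\Mod(\vdash))$; as $\langle \A, F\rangle$ was arbitrary, this yields $\ModS(\vdash^{l}) \subseteq \PL(\Mod(\vdash))$.

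Because all the substantive work has already been carried out in Theorem \ref{th: plonka sum of matrices} and Lemma \ref{lm: regolarizzazione con p.f.}, I do not expect a genuine obstacle here; the corollary is essentially a bookkeeping step. The only point deserving a moment's care is that the invocation of Lemma \ref{lm: regolarizzazione con p.f.} relies on the finitarity of $\vdash$, which is available by hypothesis, and on recognising that Theorem \ref{th: plonka sum of matrices} may be applied verbatim to $\vdash^{l}$ rather than to $\vdash$, licensed by Lemma \ref{lemma: p.f. della regolarizzazione}.
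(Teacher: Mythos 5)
Your proof is correct and follows essentially the paper's own route: the paper obtains this corollary by observing that the second half of the proof of Theorem~\ref{theor:HilbertCalculus} performs precisely your steps, namely applying Theorem~\ref{th: plonka sum of matrices} to the companion logic (legitimised by Lemma~\ref{lemma: p.f. della regolarizzazione}) and then showing the fibers are models of $\vdash$ via the construction of Lemma~\ref{lm: regolarizzazione con p.f.}. Your packaging simply bypasses the detour through the calculus $\mathcal{H}^{l}$ and invokes Lemma~\ref{lm: regolarizzazione con p.f.} directly for $\langle \A, F\rangle \in \ModS(\vdash^{l})$, which is equally valid.
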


\begin{example}\label{calcolo alla hilbert finito per pwk}
A Hilbert-style calculus for \PWK is axiomatized, following Definition \ref{def: hilbert calc per regolarizz.}, as follows ($\varphi\to\psi$ is a shorthand for $\neg\varphi\lor\psi$):

\begin{align}
\emptyset &\rhd(\varphi\lor\varphi)\to\varphi\tag{A1} \\
\emptyset &\rhd\varphi\to(\varphi\lor\psi)\tag{A2} \\
\emptyset &\rhd(\varphi\lor\psi)\to(\psi\lor\varphi)\tag{A3} \\
\emptyset &\rhd(\varphi\to\psi)\to((\gamma\lor\varphi)\to(\gamma\lor\psi))\tag{A4} \\
\emptyset &\rhd (\varphi\land\psi)\to\lnot(\lnot\varphi\lor\lnot\psi)\tag{A5}\\
\emptyset &\rhd\lnot(\lnot\varphi\lor\lnot\psi)\to(\varphi\land\psi)\tag{A6} \\
\varphi,\varphi\to\psi &\rhd \psi\land(\psi\lor (\varphi\land(\varphi\lor (\varphi\to\psi)))) \tag{R1} \\
\varphi &\rhd \varphi\land (\varphi\lor\psi)\tag{R2}\\
\chi(\epsilon, \vec{z}\?\?) \?\?\?\lhd&\rhd \chi(\delta, \vec{z}\?\?)\tag{R*}
\end{align}

Notice that Axioms (A1)--(A6), together with the rule of \emph{Modus Ponens}, provide a Hilbert-style calculus for propositional classical logic. (R1) and (R2) are obtained by setting $x\cdot y\coloneqq x\land (x\vee y)$ as partition function for classical logic. Note, moreover, that (R*) is in fact a rule scheme, summarizing an infinity of rules.
\qed
\end{example}

\section{Suszko reduced models of $\vdash^{l}$}\label{sec: Suszko red mod}

In this section we investigate the structure of the Suszko reduced models $\ModS(\vdash^{l})$ of the variable inclusion companion $\vdash^{l}$ of a logic $\vdash$ (with partition function).  To this end, we rely on the following technical observation:

\begin{lemma}\label{lem:basic-facts}
Let $\vdash$ be a logic with a partition function $\cdot$, and  $X=\langle\?\?\langle I,\leq\rangle , \{ \langle \A_{i},F_{i}\rangle\}_{i\in I}, \{ f_{ij} \! : \! i \leq j \}\rangle$ a directed system of models of $\vdash$. Given an upset $J \subseteq I$, we define for every $i \in I$,
 \[
G_{i}\coloneqq \left\{ \begin{array}{ll}
A_{i}& \text{if $i \in J$}\\
   F_{i} & \text{otherwise.}\\
  \end{array} \right.  
\]
Then $\bigcup_{i \in I}G_{i}$ is a $\vdash^{l}$-filter on $\PL(\A_{i})_{i \in I}$.

\end{lemma}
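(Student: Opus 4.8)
The plan is to realize the matrix $\langle \PL(\A_i)_{i \in I}, \bigcup_{i \in I} G_i \rangle$ as itself the P\l onka sum of a directed system of \emph{models} of $\vdash$, so that the conclusion follows immediately from Lemma \ref{lem:soundness}. The key observation is that replacing the filter $F_i$ by $A_i$ in the components indexed by $J$ turns those fibers into trivial matrices, which are models of every logic; so nothing is lost by enlarging the filters in this way.

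First I would define a new directed system $X' = \langle \langle I, \leq\rangle, \{ \langle \A_i, G_i\rangle \}_{i \in I}, \{ f_{ij} : i \leq j \}\rangle$, keeping the same semilattice, the same algebras $\A_i$, and the same transition homomorphisms $f_{ij}$, but equipping the $i$-th component with $G_i$ in place of $F_i$. Since the algebras and the homomorphisms are unchanged, the algebraic reduct of $\PL(X')$ is again $\PL(\A_i)_{i \in I}$, and by the definition of the P\l onka sum of matrices its filter is $\bigcup_{i \in I} G_i$; thus $\PL(X') = \langle \PL(\A_i)_{i \in I}, \bigcup_{i \in I} G_i\rangle$.

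Next I would check that $X'$ is a genuine directed system of matrices. The conditions that $f_{ii}$ is the identity and $f_{ik} = f_{jk} \circ f_{ij}$ are inherited verbatim from $X$, so the only thing to verify is $f_{ij}[G_i] \subseteq G_j$ whenever $i \leq j$. This is precisely where the hypothesis that $J$ is an upset enters: if $i \in J$, then $j \in J$ as well, whence $G_j = A_j \supseteq f_{ij}[A_i] = f_{ij}[G_i]$; if $i \notin J$ but $j \in J$, then $G_j = A_j$ contains everything; and if neither $i$ nor $j$ belongs to $J$, then $G_i = F_i$ and $G_j = F_j$, so the inclusion is exactly the condition that $X$ is a directed system of matrices. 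The remaining case $i \in J$, $j \notin J$ with $i \leq j$ simply cannot occur, because $J$ is upward closed.

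Finally I would observe that each component $\langle \A_i, G_i\rangle$ is a model of $\vdash$: for $i \notin J$ this is the assumption that $X$ is a directed system of models of $\vdash$, whereas for $i \in J$ the matrix $\langle \A_i, A_i\rangle$ is trivial and hence a model of every logic, as recorded in the preliminaries. Thus $X'$ is a directed system of models of $\vdash$, and Lemma \ref{lem:soundness} gives that $\PL(X')$ is a model of $\vdash^{l}$; equivalently, $\bigcup_{i \in I} G_i$ is a $\vdash^{l}$-filter on $\PL(\A_i)_{i \in I}$. There is no serious obstacle in this argument; the one step that genuinely requires care is the directed-system verification above, which is exactly where upward closure of $J$ is used. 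It is worth noting that the partition function, although part of the standing hypotheses of the section, plays no role in this particular proof.
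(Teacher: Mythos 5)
Your proof is correct and takes essentially the same route as the paper: the paper also forms the directed system $\langle \langle I,\leq\rangle, \{\langle \A_i, G_i\rangle\}_{i\in I}, \{f_{ij} : i\leq j\}\rangle$, notes that each component is a model of $\vdash$ (the enlarged ones being trivial matrices), and invokes Lemma \ref{lem:soundness}. The only difference is that the paper compresses the directed-system verification into ``it is clear,'' whereas you spell out the case analysis where the upset hypothesis on $J$ is used --- a worthwhile elaboration, and your closing remark that the partition function is not needed here is also accurate.
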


\begin{proof}
It is clear that the matrices $\{ \langle \A_{i}, G_{i}\rangle : i \in I \}$ give naturally rise to a directed system of matrices, when equipped with the homomorphisms in  $X$. Moreover, by assumption each $\langle \A_{i}, G_{i}\rangle$ is a model of $\vdash$. Thus $\bigcup_{i \in I}G_{i}$ is a $\vdash^{l}$-filter on $\PL(\A_{i})_{i \in I}$ by Lemma \ref{lem:soundness}.
\end{proof}

The following result identifies the P\l onka sums of matrices in $\ModS(\vdash)$ that belong to $\ModS(\vdash^{l})$.

\begin{theorem}\label{th: caratterizzazione Plonka suszko ridotta}
Let $\vdash$ be a logic with a partition function $\cdot$, and let $X=\langle \langle I,\leq\rangle , \{ \langle \A_{i},F_{i}\rangle\}_{i\in I}, \{ f_{ij} \! : \! i \leq j \}\rangle$ be a directed system of matrices in $\ModS(\vdash)$. The following conditions are equivalent:
\benroman
\item $\PL(X)\in\ModS(\vdash^{l})$.
\item For every $n, i \in I$ such that $\langle \A_{n}, F_{n}\rangle$ is trivial and $n<i$, there exists $j \in I$ s.t. $n\leq j, i\nleq j$ and $\A_{j}$ is non-trivial.
\eroman
\end{theorem}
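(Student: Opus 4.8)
The plan is to compute the Suszko congruence $\Tarski^{\PL}_{\vdash^{l}}F$ of $F\coloneqq\bigcup_{i\in I}F_{i}$ on $\PL\coloneqq\PL(\A_{i})_{i\in I}$ through the polynomial characterization of Lemma~\ref{lem:polynomial charact. Suszko cong.}, using two sources of $\vdash^{l}$-filters that require no finitarity assumption: the layered filters of Lemma~\ref{lem:basic-facts}, and the Plonka sums of directed systems of \emph{models of }$\vdash$, which are $\vdash^{l}$-filters by Lemma~\ref{lem:soundness}. Since $\Tarski^{\PL}_{\vdash^{l}}F$ is the intersection of the Leibniz congruences over all filters extending $F$, to prove it trivial it suffices to separate every pair $a\neq b$ by a single explicitly built filter from this stock. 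Two computations drive everything. First, because within a fiber the partition function is a first projection (Theorem~\ref{th: Teorema di Plonka}(1)), for $a\in A_{n}$ and $c\in A_{l}$ one has $a\cdot^{\PL}c=f_{n,n\lor l}(a)$; combined with $x\vdash^{l}x\cdot y$ this shows that every $\vdash^{l}$-filter $G$ is closed \emph{upward} along the transitions, i.e.\ $z\in G\cap A_{s}$ implies $f_{st}(z)\in G$ whenever $s\leq t$. Second, if $a\in A_{n}$ and $b=f_{nm}(a)\in A_{m}$ with $n\leq m$, then for every unary polynomial $p$ in which $x$ occurs one has $p(b)=f_{st}(p(a))$ with $p(a)\in A_{s}$, $s=n\lor r\leq t=m\lor r$, where $r$ is the join of the indices of the parameters of $p$ (if $x$ does not occur, then $p(a)=p(b)$); hence $p(a)\in G\Rightarrow p(b)\in G$ for all such $G$.

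For (i)$\Rightarrow$(ii) I argue contrapositively. Assuming (ii) fails, fix $n<i$ with $\langle\A_{n},F_{n}\rangle$ trivial and with $\A_{j}$ trivial for every $j$ such that $n\leq j$ and $i\nleq j$; recall that a trivial matrix in $\ModS(\vdash)$ equals $\langle\boldsymbol{1},\{1\}\rangle$, so $\A_{j}$ is trivial exactly when $F_{j}=A_{j}$. Let $e$ be the unique element of $A_{n}$ and put $e'\coloneqq f_{ni}(e)\in A_{i}$, so $e\neq e'$. I claim $\langle e,e'\rangle\in\Tarski^{\PL}_{\vdash^{l}}F$. Given a unary polynomial $p$ and a filter $G\supseteq F$, the forward implication above yields $p(e)\in G\Rightarrow p(e')\in G$. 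For the converse set $s=n\lor r$: if $i\leq s$ then $s=t$ and $p(e)=p(e')$; otherwise $n\leq s$ and $i\nleq s$ force $\A_{s}$ trivial, whence $p(e)\in A_{s}=F_{s}\subseteq G$. In either case $p(e)\in G\Leftrightarrow p(e')\in G$, so the two elements lie in the Suszko congruence, which is therefore not the identity, and $\PL(X)\notin\ModS(\vdash^{l})$.

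For (ii)$\Rightarrow$(i) I separate an arbitrary pair $a\neq b$. If $a,b$ lie in the same fiber $A_{n}$, I exploit that $\langle\A_{n},F_{n}\rangle\in\ModS(\vdash)$ makes $\Tarski^{\A_{n}}_{\vdash}F_{n}$ the identity: I choose a $\vdash$-filter $H\supseteq F_{n}$ with $\langle a,b\rangle\notin\Leibniz^{\A_{n}}H$ (Lemma~\ref{lem: polynomial charact of leib cong}), propagate it upward by $G_{j}\coloneqq\mathrm{Fg}^{\A_{j}}_{\vdash}(F_{j}\cup f_{nj}[H])$ for $j\geq n$ and $G_{j}\coloneqq F_{j}$ otherwise, and verify (using that preimages of $\vdash$-filters along the transitions are $\vdash$-filters) that this is a directed system of $\vdash$-models; hence $G\coloneqq\bigcup_{j}G_{j}$ is a $\vdash^{l}$-filter with $G\cap A_{n}=H$, and the separating $\A_{n}$-polynomial lifts to $\PL$ unchanged. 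If instead $a\in A_{n}$, $b\in A_{m}$ with $n\neq m$, I raise both to $k\coloneqq n\lor m$ via $x\mapsto x\cdot c$ with $c\in A_{k}$, sending $a,b$ to $f_{nk}(a),f_{mk}(b)$. When $f_{nk}(a)\neq f_{mk}(b)$ I separate these in $A_{k}$ by the same-fiber argument and pull back. When $f_{nk}(a)=f_{mk}(b)$ the elements merge, and I separate them by a polynomial $p(x)=d\cdot x$ with $d\in A_{s}\setminus F_{s}$ together with the filter of Lemma~\ref{lem:basic-facts} for the principal upset generated by the relevant join: taking $s=n$ succeeds as soon as $\A_{n}$ is nontrivial, and when the home fiber is trivial condition (ii)---applied to $(n,m)$ in the comparable case and to $(n,n\lor m)$ in the incomparable case---supplies $s$ with $n\leq s$, $\A_{s}$ nontrivial and $s$ outside the relevant upset, which is exactly what makes $p(a)=d\notin G$ while $p(b)\in G$.

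The hard part is the merging cross-fiber case, where the partition function collapses $a$ and $b$, so separation must instead detect their distinct home indices. The decisive point is that filters are closed upward along the transitions but not downward, so a nontrivial fiber $A_{s}$ lying above the home index yet off the principal upset in question lets one plant a non-designated value $d$ whose raise at $b$ enters the upset while its value at $a$ does not; locating such an $s$ is precisely the combinatorial content of (ii), matching the obstruction exposed in (i)$\Rightarrow$(ii). A secondary matter to handle carefully is that every separating filter is built as a Plonka sum of genuine $\vdash$-models and is a $\vdash^{l}$-filter by Lemma~\ref{lem:soundness}, so no converse statement of the form ``the fibers of a $\vdash^{l}$-filter are $\vdash$-models'' (which would require finitarity) is ever needed.
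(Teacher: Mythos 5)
Your proposal is correct in substance and, despite the reorganization, runs on the same engine as the paper's proof: both directions reduce to polynomial separation relative to explicitly constructed $\vdash^{l}$-filters, namely the layered filters of Lemma \ref{lem:basic-facts} and filters obtained by propagating a witness of Suszko-reducedness of a single fiber upward, together with the two structural facts that $\cdot^{\PL}$ acts as a first projection inside a fiber and realizes the transition maps. The genuine differences are organizational. You prove (i)$\Rightarrow$(ii) contrapositively, exhibiting a pair inside the Suszko congruence when (ii) fails, whereas the paper argues directly, extracting the witness $j=n\lor k$ from the parameters of a separating polynomial; the two computations are mirror images. In (ii)$\Rightarrow$(i) the paper first reduces incomparable pairs to comparable ones using that the Suszko congruence is a congruence, and then kills \emph{all} comparable pairs uniformly with the polynomial $c\cdot x$; you instead raise both elements to the join and split on whether they merge, and your non-merge branch re-routes through the same-fiber case, which the paper's path never needs. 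Your same-fiber filter extension via generated filters $\mathrm{Fg}^{\A_{j}}_{\vdash}(F_{j}\cup f_{nj}[H])$, justified by the preimage-of-a-filter fact, is sound but heavier than the paper's extension, which simply takes all of $A_{l}$ above the relevant index.

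One step, as written, fails in a corner case. In the merging cross-fiber case your dichotomy is: if $\A_{n}$ is nontrivial take $s=n$; if $\A_{n}$ is trivial apply (ii). But (ii) is only applicable when the \emph{matrix} $\langle \A_{n},F_{n}\rangle$ is trivial, i.e.\ $F_{n}=A_{n}$, and a trivial \emph{algebra} may carry the empty filter: $\langle\boldsymbol{1},\emptyset\rangle$ can perfectly well be a Suszko reduced model when $\vdash$ has no theorems (e.g.\ the $\land,\lor$-fragment of classical logic, which does have a partition function). For such a fiber neither branch of your split functions. The repair is immediate and uses only your own tools: the correct dichotomy is $F_{n}\neq A_{n}$ versus $F_{n}=A_{n}$; in the first case any $d\in A_{n}\smallsetminus F_{n}$ makes $s=n$ work (covering both the nontrivial algebra and the empty-filter cases), and (ii) is needed only in the second. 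This is precisely how the paper splits its cases, handling $F_{i}=\emptyset$ by taking $c\coloneqq a$. The same algebra/matrix-triviality conflation appears in your (i)$\Rightarrow$(ii) step (``$\A_{j}$ is trivial exactly when $F_{j}=A_{j}$''); there it is harmless, because the indices in question lie above $n$ and the nonempty filter $F_{n}=A_{n}$ propagates along the transition maps, but that one-line justification should be made explicit.
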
 

 \begin{proof}
 (i)$\Rightarrow$(ii): Suppose that $\PL(X)\in\ModS(\vdash^{l})$, and consider $n, i \in I$ such that $\langle \A_{n}, F_{n}\rangle$ is trivial and $n<i$. The fact that $\langle \A_{n}, F_{n}\rangle$ is both trivial and belongs to $\ModS(\vdash)$ implies that $\A_{n}$ is the trivial algebra. Then $\pair{\A_{n} , F_{n}}=\pair{\mathbf{1}, \{1\}}$. Moreover, set $a \coloneqq f_{ni}(1)$. Since $n < i$, we know that $a \ne 1$.  Together with the fact that $\PL(X)\in\ModS(\vdash^{l})$, this implies that there is a $\vdash^{l}$-filter $G$ of $\PL(\A_{i})_{i \in I}$ such that $\bigcup_{i \in I}F_{i} \subseteq G$ and $\langle a, 1 \rangle \notin \Leibniz^{\PL(\A_{i})_{i \in I}} G$. Thus, by Lemma \ref{lem:polynomial charact. Suszko cong.}, there is a formula $\varphi(x, \vec{z})$ and elements $\vec{c} \in \bigcup_{i \in I}A_{i}$ such that
 \begin{equation}\label{eq:zz1}
 \varphi^{\PL}(a, \vec{c}\?\?) \in G \Longleftrightarrow \varphi^{\PL}(1, \vec{c}\?\?) \notin G.
 \end{equation}
\noindent 
We can assume w.l.o.g.\ that all the elements in the sequence $\vec{c}$ belong to the same component $A_{k}$ of the P\l onka sum $\PL(\A_{i})_{i \in I}$.\footnote{More precisely, if $\vec{c} = c_{1}, \dots, c_{m}$ and $c_{1} \in A_{p_{1}}, \dots, c_{m} \in A_{p_{m}}$, then we set $k \coloneqq p_{1} \lor \dots \lor p_{m}$ and replace $c_i$ by $f_{p_{i}k}(c_i)$.}
 
We claim that indeed $\varphi^{\PL}(1, \vec{c}\?\?) \notin G$. Suppose the contrary towards a contradiction. Then $\varphi^{\PL}(1, \vec{c}\?\?) \in G$.  First observe that
\begin{align}
\varphi^{\PL}(a, \vec{c}\?\?) &= \varphi^{\A_{i \lor k}}(f_{i, i \lor k}(a), f_{k, i \lor k}(\vec{c}\?\?))\label{Eq:mmm0}\\
&= \varphi^{\A_{i \lor k}}(f_{i, i \lor k} \circ f_{n, i}(1), f_{k, i \lor k}(\vec{c}\?\?)) \label{Eq:mmm1}\\
&=\varphi^{\A_{i \lor k}}(f_{n \lor k, i \lor k} \circ f_{n, n \lor k}(1), f_{n \lor k, i \lor k} \circ f_{k, n \lor k}(\vec{c}\?\?))\label{Eq:mmm2}\\
&=f_{n \lor k, i \lor k} \?\? \varphi^{\A_{n \lor k}}(f_{n, n \lor k}(1), f_{k, n \lor k}(\vec{c}\?\?))\label{Eq:mmm3}\\
&= f_{n \lor k, i \lor k} \?\?\varphi^{\PL}(1, \vec{c}\?\?)\label{Eq:mmm4}\\
&=f_{n \lor k, i \lor k} (\varphi^{\PL}(1, \vec{c}\?\?))\cdot^{\A_{i \lor k}} f_{i, i \lor k}(a)\label{Eq:mmm5}\\
&= \varphi^{\PL}(1, \vec{c})\cdot^{\PL} a\label{Eq:mmm6}\\
&\in G.\label{Eq:mmm7}
\end{align}
The above equalities are justified as follows: (\ref{Eq:mmm2}) is a consequence of the fact that $X$ is a directed system of matrices and that $n\vee k\leq i\vee k$ (since $n\leq i$), (\ref{Eq:mmm5}) follows from the fact that $x\cdot^{\PL} y$ is the projection on the first component on the algebra $\A_{i \lor k}$. Condition (\ref{Eq:mmm7}) follows from the fact that $\varphi^{\PL}(1, \vec{c}\?\?)\in G$, $G$ is a $\vdash^{l}$-filter and, by Lemma \ref{lemma: p.f. della regolarizzazione} $\cdot$ is a partition function for $\vdash^{l}$, hence $x\vdash^{l}x\cdot y$.
Hence we have that $\varphi^{\PL}(a, \vec{c}\?\?),\varphi^{\PL}(1,\vec{c}\?\?) \in G$, which contradicts \eqref{eq:zz1}, establishing the claim.

From the claim and (\ref{eq:zz1}) we get that $\varphi^{\PL}(a, \vec{c}\?\?) \in G$ and $\varphi^{\PL}(1, \vec{c}\?\?) \notin G$. Set $j \coloneqq n\vee k$ and $m \coloneqq k\vee i$. We claim that $j$ is such that: (A) $n\leq j$, (B) $\A_j$ is non trivial and (C) $i\nleq j$. We proceed to prove (A, B, C).

(A): Since $j=n\vee k$, we have that $n\leq j$. 

(B): Observe that 
\[
\varphi^{\PL}(1,\vec{c}\?\?) = \varphi^{\A_j}(f_{nj}(1),f_{kj}(\vec{c}\?\?))\in A_j.
\]
Together with $\varphi^{\PL}(1,\vec{c}\?\?)\notin G$, this implies that $\varphi^{\PL}(1,\vec{c}\?\?)\in A_j\smallsetminus G $. 

On the other hand, since $F_n = A_n$, we have that
\[
f_{nj}(1) \in f_{nj}[F_{n}] \subseteq F_{j} \subseteq A_{j} \cap G.
\]
Thus both $A_{j} \cap G$ and $A_{j} \smallsetminus G$ are non-empty. We conclude that $\A_{j}$ is non-trivial.

(C): Suppose, by contradiction, that $i\leq j$. In particular, this implies that $m= j$ (indeed, $i\leq j = n\lor k$, thus $i\lor k\leq n\lor k$, i.e. $m\leq j$; on the other hand, since $n < i$ then $n\lor k \leq i\lor j$, i.e. $j\leq m$). Therefore we have that  
 \begin{align}
 \varphi^{\PL}(1,\vec{c}\?\?) &= \varphi^{\A_j}(f_{nj}(1),f_{kj}(\vec{c}\?\?))\label{p1} && 
\\ &= \varphi^{\A_j}(f_{ij} \circ f_{ni}(1),f_{kj}(\vec{c}\?\?))\label{p2} && 
\\ & = \varphi^{\A_j}(f_{ij}(a),f_{kj}(\vec{c}\?\?))\label{p3} && 
\\ & = \varphi^{\A_m}(f_{im}(a),f_{km}(\vec{c}\?\?))\label{p4} && 
\\ & = \varphi^{\PL}(a,\vec{c}\?\?)\in G\label{p5}.&&
\end{align}
The above equalities are justified as follows. \eqref{p2} follows from the fact that $i\leq m=j$. \eqref{p3} is a consequence of $a=f_{ni}(1)$. \eqref{p4} from $j=m$ and \eqref{p5} from $m=i\vee k$. This establishes the above equalities, yielding that $\varphi^{\PL}(1,\vec{c}\?\?) \in G$. But this contradicts the fact that $\varphi^{\PL}(1,\vec{c}\?\?) \notin G$. 

Hence (A), (B) and (C) hold establishing our claim. In particular, this implies that $j\in I$ satisfies the condition in the statement.
\vspace{10pt}

 (ii)$\Rightarrow$(i):  By Lemma \ref{lem:soundness} we know that $\PL(X)$ is a model of $\vdash^{l}$. It only remains to prove that it is Suszko reduced.  To this end, let $\theta$ be the Suszko congruence of $\PL(X)$. 
 
Observe that, in order to prove that $\theta$ is the identity, it will be enough to show that it does not identify distinct elements in components of the P\l onka sum which are comparable with respect to the order $\leq$. To prove this, suppose indeed that $\theta$ does not identify different elements in components of the P\l onka sum which are comparable. Then consider two different elements $a, b \in A=\bigcup_{i\in I} A_i$. There exist $i, j \in I$ such that $a \in A_{i}$ and $b \in A_{j}$. If $i$ and $j$ are comparable, then by assumption $\langle a, b\rangle\notin\theta$. Then consider the case where $i$ and $j$ are incomparable. Set $k \coloneqq i \lor j$. Clearly we have that $i, j < k$. In particular, we have that $b\cdot b = b \in A_{j}$ and $a\cdot b \in A_{k}$ and, therefore, $b\cdot^{\PL} b \ne a\cdot^{\PL} b$. Since $j$ and $k$ are comparable, this implies that $\langle b\cdot^{\PL} b, a\cdot^{\PL} b\rangle\notin\theta$. In particular, this means that $\langle a, b\rangle\notin\theta$ as well. As a consequence we conclude that $\theta$ is the identity.
 
By the above observation, to prove that $\theta$ is the identity, it will be enough to show that it does not identify elements in components of the P\l onka sum $\PL(X)$ which are comparable with respect to $\leq$. To this end, consider two different elements $a, b \in A$ such that $a \in A_{i}$ and $b \in A_{j}$ with $i \leq j$. We have two cases: either $i = j$ or $i < j$.

First consider the case where $i = j$, that is $a, b \in A_{i}$. By assumption, we have that $\langle \A_{i}, F_{i}\rangle \in \ModS(\vdash)$. Therefore we can assume w.l.o.g.\ that there is a $\vdash$-filter $G_{i}$ on $\A_{i}$ such that $F_{i} \subseteq G_{i}$, some elements $\vec{c} \in A_{i}$, and a formula $\varphi(x, \vec{z})$ such that $\varphi^{\A_{i}}(a, \vec{c}\?\?) \in G_{i}$ and $\varphi^{\A_{i}}(b, \vec{c}\?\?) \notin G_{i}$. For every $l \ne i$, define
 \[
G_{l}\coloneqq \left\{ \begin{array}{ll}
A_{l}& \text{if $i \leq l$}\\
   F_{l} & \text{otherwise.}\\
  \end{array} \right.  
\]
 An analogous argument to the one described in the proof Lemma \ref{lem:basic-facts} shows that $G \coloneqq \bigcup_{i \in I}G_{i}$ is a $\vdash^{l}$-filter on $\PL(\A_{i})_{i \in I}$. Moreover, observe that
\begin{align*}
\varphi^{\PL}(a, \vec{c}) &= \varphi^{\A_{i}}(a, \vec{c}) \in G\\
\varphi^{\PL}(b, \vec{c}) &= \varphi^{\A_{i}}(b, \vec{c}) \notin G.
\end{align*}
We conclude that $\langle a, b \rangle\notin\theta$.

Then we consider the case where $i < j$. We have cases: either $\A_{i}$ is trivial or not. If $\A_{i}$ is non-trivial, then $F_{i} \ne A_{i}$ as $\langle \A_{i}, F_{i}\rangle \in \ModS(\vdash)$. Then for every $l \in I$, we define
 \[
G_{l}\coloneqq \left\{ \begin{array}{ll}
A_{l}& \text{if $i < l$}\\
   F_{l} & \text{otherwise.}\\
  \end{array} \right.  
\]
By Lemma \ref{lem:basic-facts} we know that $G \coloneqq \bigcup_{i \in I}G_{i}$ is a $\vdash^{l}$-filter on $\PL(\A_{i})_{i \in I}$. Then choose an element $c \in A_{i} \smallsetminus F_{i}$. We have that
\begin{align*}
c\cdot^{\PL} a = c\cdot^{\A_i} a = c \in A_{i} \smallsetminus F_{i} = A_{i} \smallsetminus G_{i}
\end{align*}
and $c\cdot^{\PL} b \in A_{j} = G_{j} $. Therefore, $ c\cdot^{\PL} a \notin G \text{ and } c\cdot^{\PL} b \in G$. 
Hence we conclude that $\langle a, b \rangle\not\in\theta$, as desired. 

Then we consider the case where $\A_{i}$ is trivial. We have cases: either $F_{i} = \emptyset$ or $F_{i} = A_{i}$. First suppose that $F_{i} = \emptyset$. Iterating the argument in the previous paragraph (taking $c\coloneqq a$) we obtain that $\langle a, b \rangle\notin\theta$. Then consider the case where $F_{i} = A_{i}$. Observe that in this case $\langle \A_{i}, F_{i}\rangle$ is a trivial matrix. Therefore we can apply the assumption, obtaining an element $k \in I$ such that $\A_{k}$ is non-trivial, $i < k$ and $j \nleq k$. Then for every $l \in I$ we define
 \[
G_{l}\coloneqq \left\{ \begin{array}{ll}
A_{l}& \text{if $k \lor j \leq l$}\\
   F_{l} & \text{otherwise.}\\
  \end{array} \right.  
\]
By Lemma \ref{lem:basic-facts} we know that $G \coloneqq \bigcup_{i \in I}G_{i}$ is a $\vdash^{l}$-filter on $\PL(\A_{i})_{i \in I}$. Since $\A_{k}$ is non-trivial and $\langle \A_{k}, F_{k}\rangle \in \ModS(\vdash)$, there is $c \in A_{k} \smallsetminus F_{k}$. Since $k < k\vee j$, we have that
\begin{align*}
c\cdot^{\PL} a &= c\cdot^{\A_{k}} f_{ik}(a) = c\in A_{k} \smallsetminus F_{k} = A_{k} \smallsetminus G_{k}\\
c\cdot^{\PL} b & \in A_{j \lor k} = G_{j \lor k}.
\end{align*}
Hence we conclude that $c\cdot^{\PL} a \notin G \text{ and }c\cdot^{\PL} b \in G$. But this means that $\langle a, b \rangle\notin\theta$.
 \end{proof}

Theorem \ref{th: caratterizzazione Plonka suszko ridotta} identifies the Suszko reduced models of $\vdash^{l}$ that can be expressed in terms of P\l onka sums of Suszko reduced models of $\vdash$. It is natural to wonder whether it is true that \textit{all} Suszko reduced models of $\vdash^{l}$ are of this kind. Example \ref{counterexample suszko red.} shows that this does not hold in general. A full characterization of the class of Suszko reduced models can be given for the (left) variable inclusion companions of logics which have stronger properties, such as equivalential and finitary, or having inconsistency terms. These descriptions are addressed in the following subsections.

 \subsection{Equivalential logics}\label{subse: equivalential logics}
 
 It turns out that, in the setting of finitary equivalential logics $\vdash$, the class of matrices $\ModS(\vdash^{l})$ has a very transparent description in terms of P\l onka sums, as we proceed to prove (see Theorem \ref{Thm: caratterizzazione suszko regolare}).
 
 \begin{lemma}\label{lm: ridotti di regolare sono plonka di ridotti non regolari}
Let $\vdash$ be an equivalential finitary logic with a partition function. Then
\[
\Modstar(\vdash^{l}) \subseteq \PL(\Modstar(\vdash)).
\]
%
 \end{lemma}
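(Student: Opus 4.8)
The plan is to take an arbitrary $\langle \A, F\rangle \in \Modstar(\vdash^{l})$, realize it as a P\l onka sum of its P\l onka fibers, and then show that each fiber lies in $\Modstar(\vdash)$. First I would note that $\A \in \Alg(\vdash^{l})$: since $\ModS(\vdash^{l}) = \PSD\Modstar(\vdash^{l})$ we have $\Modstar(\vdash^{l}) \subseteq \ModS(\vdash^{l})$, so $\A$ is the algebraic reduct of a Suszko reduced model of $\vdash^{l}$. By Lemma \ref{lemma: p.f. della regolarizzazione}, $\cdot$ is a partition function for $\vdash^{l}$, so Theorem \ref{th: plonka sum of matrices} applies to $\vdash^{l}$ and presents $\langle \A, F\rangle = \PL(X)$ as the P\l onka sum of its directed system of fibers $X = \langle \langle I,\leq\rangle, \{\langle \A_{i}, F_{i}\rangle\}_{i\in I}, \{f_{ij} : i \leq j\}\rangle$. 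Because $\vdash$ is finitary, Lemma \ref{lm: regolarizzazione con p.f.} guarantees that each fiber $\langle \A_{i}, F_{i}\rangle$ is a model of $\vdash$. It therefore remains only to prove that every fiber is Leibniz reduced.

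This is where equivalentiality enters. Let $\Delta(x,y)$ be a set of congruence formulas for $\vdash$, fix a fiber $\langle \A_{i}, F_{i}\rangle$, and suppose $\langle a, b\rangle \in \Leibniz^{\A_{i}}F_{i}$ for some $a, b \in A_{i}$; I must show $a = b$. Equivalentiality, applied to the fiber model $\langle \A_{i}, F_{i}\rangle$, gives $\Delta^{\A_{i}}(a,b) \subseteq F_{i}$, and the heart of the argument is to promote this to $\langle a, b\rangle \in \Leibniz^{\A}F$, which forces $a = b$ since $\langle \A, F\rangle$ is Leibniz reduced. I would establish $\langle a, b\rangle \in \Leibniz^{\A}F$ via the polynomial characterization of Lemma \ref{lem: polynomial charact of leib cong}. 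Let $p(x) = \varphi^{\PL}(x, \vec{d}\?\?)$ be a unary polynomial of $\A$. If $x$ does not occur in $\varphi$ then $p$ is constant and the required equivalence is trivial; otherwise $p(a)$ and $p(b)$ lie in a common component $A_{j}$ with $i \leq j$. Applying the homomorphism $f_{ij}$ to $\Delta^{\A_{i}}(a,b) \subseteq F_{i}$ and using $f_{ij}[F_{i}] \subseteq F_{j}$ yields $\Delta^{\A_{j}}(f_{ij}(a), f_{ij}(b)) \subseteq F_{j}$; since $\langle \A_{j}, F_{j}\rangle$ is again a fiber, hence a model of $\vdash$, equivalentiality returns $\langle f_{ij}(a), f_{ij}(b)\rangle \in \Leibniz^{\A_{j}}F_{j}$. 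As $p(a)$ and $p(b)$ are the values at these two Leibniz-related points of the polynomial of $\A_{j}$ obtained from $\varphi$ after transporting its parameters into $A_{j}$, compatibility of $\Leibniz^{\A_{j}}F_{j}$ with $F_{j}$ gives $p(a) \in F_{j}$ exactly when $p(b) \in F_{j}$, that is $p(a) \in F$ exactly when $p(b) \in F$ because $F_{j} = F \cap A_{j}$. Ranging over all $p$ we obtain $\langle a, b\rangle \in \Leibniz^{\A}F$, whence $a = b$.

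Each fiber is thus Leibniz reduced, i.e.\ belongs to $\Modstar(\vdash)$, so $\langle \A, F\rangle = \PL(X) \in \PL(\Modstar(\vdash))$, as required. The delicate point is the second paragraph: an arbitrary unary polynomial of the P\l onka sum may carry $a$, $b$ and the parameters across several components, so Leibniz-relatedness \emph{inside} $\A_{i}$ does not obviously propagate. Equivalentiality of $\vdash$ is exactly what makes it go through, since it converts Leibniz-relatedness into the syntactic membership condition $\Delta \subseteq F_{i}$, which the transition homomorphisms $f_{ij}$ transport intact and which equivalentiality then reinstates as Leibniz-relatedness in the target component. Finitarity is needed only to invoke Lemma \ref{lm: regolarizzazione con p.f.}.
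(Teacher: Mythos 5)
Your proposal is correct, and its skeleton coincides with the paper's: establish $\A \in \Alg(\vdash^{l})$, invoke Lemma \ref{lemma: p.f. della regolarizzazione} and Theorem \ref{th: plonka sum of matrices} to write $\langle \A, F\rangle = \PL(X)$, use Lemma \ref{lm: regolarizzazione con p.f.} (this is indeed the only place finitarity enters, exactly as you say) to see that the fibers are models of $\vdash$, and then prove that the fibers are Leibniz reduced. Where you genuinely diverge is in this last step. The paper argues globally: it shows that $\bigcup_{i\in I}\Leibniz^{\A_{i}}F_{i}$ is a congruence of the whole P\l onka sum $\A$ --- using the replacement property of the congruence formulas, $\bigcup_{j\leq n}\Delta(x_{j},y_{j}) \vdash \Delta(\lambda(\vec{x}),\lambda(\vec{y}))$, to handle basic operations whose arguments come from different components --- observes that this congruence is compatible with $F$ because each $\Leibniz^{\A_{i}}F_{i}$ is compatible with $F_{i}$, and concludes by maximality of the Leibniz congruence that it is contained in $\Leibniz^{\A}F$, which is the identity. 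You argue pointwise instead: given $\langle a,b\rangle \in \Leibniz^{\A_{i}}F_{i}$, you transport $\Delta^{\A_{i}}(a,b)\subseteq F_{i}$ along the transition map $f_{ij}$ into whichever fiber a given unary polynomial of $\A$ lands in, recover Leibniz-relatedness there by equivalentiality applied to the fiber $\langle\A_{j},F_{j}\rangle$, and conclude $\langle a,b\rangle\in\Leibniz^{\A}F$ via the polynomial characterization of Lemma \ref{lem: polynomial charact of leib cong}. Both routes are sound and of comparable length; the paper's buys a reusable structural fact (the union of the fibers' Leibniz congruences is a congruence of $\PL(X)$ compatible with $F$), while yours stays closer to the definitions, needing only the semantic characterization of equivalentiality in each fiber together with the matrix-homomorphism property $f_{ij}[F_{i}]\subseteq F_{j}$ and the identity $F_{j}=F\cap A_{j}$, and avoids invoking the syntactic replacement rule for $\Delta$ altogether.
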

 \begin{proof}
 Recall from Lemma \ref{lemma: p.f. della regolarizzazione} that also $\vdash^{l}$ has a partition function. Then consider $\langle\A, F\rangle\in\Modstar(\vdash^{l})$ and let
 \[
 X=\langle \langle I,\leq\rangle , \{ \langle \A_{i},F_{i}\rangle\}_{i\in I}, \{ f_{ij} \! : \! i \leq j \}\rangle  
\]
 be the directed system of matrices given in Theorem \ref{th: plonka sum of matrices}. We know that $\PL(X)=\langle\A, F\rangle$. Moreover, by Lemma \ref{lm: regolarizzazione con p.f.}, we know that each fiber of $X$ is a model of $\vdash$. It only remains to prove that the fibers of $X$ are Leibniz reduced. 
 
We claim that $\bigcup_{i\in I} \Leibniz^{\A_i}F_i $ is a congruence of $\A$. To show this, let $\Delta(x,y)$ be a set of congruence formulas for $\vdash$. Then consider an $n$-ary basic operation $\lambda$ and elements $a_{1},\dots,a_{n},b_{1},\dots,b_{n}\in A$ such that $\langle a_j,b_j\rangle\in \bigcup_{i\in I} \Leibniz^{\A_i}F_i $, for all $1\leq j\leq n$. This implies that are indexes $m_1,\dots, m_n\in I$ such that $a_j,b_j\in A_{m_j}$, for all $j\leq n$, and moreover that $\langle a_j,b_j\rangle\in \Leibniz^{\A_{m_j}}F_{m_j} $. The fact that $\Delta$ is a set of congruence formulas for $\vdash$ implies that 
\[
\Delta^{\PL}(a_j , b_j)=\Delta^{\A_{m_j}}(a_j , b_j)\subseteq F_{m_j}.
\]
Set $k\coloneqq m_{1}\vee\dots\vee m_{n}$. We have that
\begin{equation}\label{eq:delta_equiv1}
\bigcup_{j \leq n}\Delta^{\A_k}(f_{m_{j}k}(a_j), f_{m_{j}k}(b_j))\subseteq F_k
\end{equation}
From the fact that $\Delta$ is a set of congruence formulas for $\vdash$ it follows that (recall that $\lambda$ is an $n$-ary arbitrary operation)
\begin{equation}\label{eq:delta_equiv2}
\bigcup_{j \leq n}\Delta(x_j, y_j) \vdash \Delta(\lambda(\vec{x}), \lambda(\vec{y})).
\end{equation}

Together with \eqref{eq:delta_equiv1} and \eqref{eq:delta_equiv2}, the fact that $\pair{\A_k, F_k}$ is a model of $\vdash$ implies that 
\begin{align*}
&\text{ }\Delta^{\A_k}(\lambda^{\PL}(a_1, \dots, a_n), \lambda^{\PL}(b_1, \dots, b_n)) \\
=&\text{ }\Delta^{\A_k}(\lambda(f_{m_{1}k}(a_1), \dots, f_{m_{n}k}(a_n)), \lambda(f_{m_{1}k}(b_1), \dots, f_{m_{n}k}(b_n)))\\
 \subseteq &\text{ } F_k.
\end{align*}
Together with the fact that $\Delta$ is a set of congruence formulas for $\vdash$, this implies that
\[
\langle \lambda^{\PL}(\vec{a}), \lambda^{\PL}(\vec{b})\rangle \in \Leibniz^{\A_k}F_k \subseteq \bigcup_{i\in I} \Leibniz^{\A_i}F_i.
\]
This establishes the claim.

Since each $\Leibniz^{\A_i}F_i$ is compatible with $F_{i}$, we know that the congruence $\bigcup_{i\in I} \Leibniz^{\A_i}F_i$ is compatible with $F$. In particular, this implies that $\bigcup_{i\in I} \Leibniz^{\A_i}F_i \subseteq \Leibniz^{\A}F$. Since $\Leibniz^{\A}F$ is the identity relation, we conclude that so is each $\Leibniz^{\A_i}F_i$. Hence we obtain that $\langle \A_{i}, F_{i}\rangle \in \Modstar(\vdash)$ for every $i \in I$ and, therefore, that
\[
\langle \A, F \rangle = \PL(X) \subseteq \PL(\Modstar(\vdash)).
\]
We conclude that $\Modstar(\vdash^{l}) \subseteq \PL(\Modstar(\vdash))$, as desired.
 \end{proof}

\begin{corollary}\label{Cor:equivalential-partial-result}
If $\vdash$ is an equivalential finitary logic with a partition function, then
\[
\ModS(\vdash^{l}) \subseteq \PL(\Modstar(\vdash)) = \PL(\ModS(\vdash)).
\]
\end{corollary}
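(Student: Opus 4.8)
The plan is to split the statement into the displayed equality and the displayed inclusion, treating them separately. For the equality $\PL(\Modstar(\vdash)) = \PL(\ModS(\vdash))$, I would recall that every equivalential logic is protoalgebraic, and that a logic $\vdash$ is protoalgebraic precisely when $\Modstar(\vdash) = \ModS(\vdash)$. Since the two classes coincide, applying the operator $\PL(\cdot)$ to both sides yields the equality immediately, with no further work.

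For the inclusion $\ModS(\vdash^{l}) \subseteq \PL(\Modstar(\vdash))$, the starting point is the general identity $\ModS(\vdash^{l}) = \PSD \Modstar(\vdash^{l})$ recorded in the preliminaries, combined with Lemma \ref{lm: ridotti di regolare sono plonka di ridotti non regolari}, which gives $\Modstar(\vdash^{l}) \subseteq \PL(\Modstar(\vdash))$. Putting these together, I obtain $\ModS(\vdash^{l}) \subseteq \PSD \PL(\Modstar(\vdash))$, so that it only remains to verify $\PSD \PL(\Modstar(\vdash)) \subseteq \PL(\Modstar(\vdash))$.

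To establish this last containment, I would first use that every subdirect product is in particular a substructure of a direct product, so that $\PSD \PL(\Modstar(\vdash)) \subseteq \SSS \PPP \PL(\Modstar(\vdash))$. Then I push the P\l onka operator outward by means of Lemma \ref{Lem:plonka-commutes-with-S-and-P}: its second inclusion gives $\PPP \PL(\Modstar(\vdash)) \subseteq \PL(\PPP \Modstar(\vdash))$, and its first inclusion, applied to the class $\PPP \Modstar(\vdash)$, gives $\SSS \PL(\PPP \Modstar(\vdash)) \subseteq \PL(\SSS \PPP \Modstar(\vdash))$. Finally, since $\vdash$ is equivalential, $\Modstar(\vdash)$ is closed under $\SSS$ and $\PPP$, whence $\SSS \PPP \Modstar(\vdash) \subseteq \Modstar(\vdash)$ and therefore $\PL(\SSS \PPP \Modstar(\vdash)) \subseteq \PL(\Modstar(\vdash))$. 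Chaining the inclusions $\ModS(\vdash^{l}) \subseteq \PSD \PL(\Modstar(\vdash)) \subseteq \SSS \PPP \PL(\Modstar(\vdash)) \subseteq \PL(\SSS \PPP \Modstar(\vdash)) \subseteq \PL(\Modstar(\vdash))$ closes the argument.

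I do not expect any genuine obstacle here; the result is essentially a formal consequence of the earlier lemmas. The only point requiring care is the bookkeeping of the class operators and the order in which Lemma \ref{Lem:plonka-commutes-with-S-and-P} is invoked, namely pushing $\PPP$ through the P\l onka operator \emph{before} $\SSS$. It is also worth noting that equivalentiality (rather than mere protoalgebraicity) is used in exactly one place, namely to guarantee the closure of $\Modstar(\vdash)$ under $\SSS$ and $\PPP$, while finitarity enters only indirectly, through the hypotheses of Lemma \ref{lm: ridotti di regolare sono plonka di ridotti non regolari}.
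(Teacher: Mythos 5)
Your proof is correct and follows essentially the same route as the paper's: the identity $\ModS(\vdash^{l}) = \PSD\Modstar(\vdash^{l})$, Lemma \ref{lm: ridotti di regolare sono plonka di ridotti non regolari}, Lemma \ref{Lem:plonka-commutes-with-S-and-P}, and closure of $\Modstar(\vdash)$ under $\SSS$ and $\PPP$, chained in the same way (your reordering of when the lemma is applied versus when $\PSD$ is relaxed to $\SSS\PPP$ is immaterial). The only quibble is with your closing remark: equivalentiality is in fact used in \emph{two} places, since it is also a hypothesis of Lemma \ref{lm: ridotti di regolare sono plonka di ridotti non regolari} itself, not just in the closure of $\Modstar(\vdash)$ under $\SSS$ and $\PPP$.
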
 

\begin{proof}
First recall that $\ModS(\vdash) = \Modstar(\vdash)$, since $\vdash$ is equivalential. Thus it will be enough to prove that $\ModS(\vdash^{l}) \subseteq \PL(\Modstar(\vdash))$. We have that
\begin{align}
\ModS(\vdash^{l})& = \PSD\Modstar(\vdash^{l})\label{Eq:SP2}\\
& \subseteq \SSS\PPP\Modstar(\vdash^{l}) \label{Eq:SP3}\\
&\subseteq \SSS\PPP\PL(\Modstar(\vdash))\label{Eq:SP4}\\
& \subseteq \PL(\SSS\PPP\Modstar(\vdash))\label{Eq:SP5}\\
& = \PL(\Modstar(\vdash))\label{Eq:SP6}.
\end{align}
The non-trivial inclusions above are justified as follows: (\ref{Eq:SP4}) is a consequence of Lemma \ref{lm: ridotti di regolare sono plonka di ridotti non regolari}, (\ref{Eq:SP5}) follows from Lemma \ref{Lem:plonka-commutes-with-S-and-P}, and (\ref{Eq:SP6}) from the fact that $\Modstar(\vdash)$ is closed under $\SSS$ and $\PPP$, since $\vdash$ is equivalential. Hence we conclude that $\ModS(\vdash^{l}) \subseteq \PL(\Modstar)$. 
\end{proof}
 
We are now ready to provide a full characterization of the Suszko reduced models of the variable inclusion companion of a finitary equivalential logic (with partition function.

  \begin{theorem}\label{Thm: caratterizzazione suszko regolare}
Let $\vdash$ be an equivalential and finitary logic with a partition function, and $\langle\A, F \rangle$ be a matrix. The following conditions are equivalent:
\benroman
\item $\langle \A, F \rangle \in \ModS(\vdash^{l})$.
\item There exists a directed system of matrices $X\subseteq\Modstar(\vdash)$ indexed by a semilattice $I$ such that $\langle \A, F \rangle = \PL(X)$ and for every $n, i \in I$ such that $\langle \A_{n}, F_{n}\rangle$ is trivial and $n<i$, there exists $j \in I$ s.t. $n\leq j, i\nleq j$ and $\A_{j}$ is non-trivial.
\eroman
 \end{theorem}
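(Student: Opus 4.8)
The plan is to derive this characterization by gluing together two facts already available in this setting: Corollary \ref{Cor:equivalential-partial-result}, which places every Suszko reduced model of $\vdash^{l}$ inside the class of P\l onka sums of Leibniz reduced models of $\vdash$, and Theorem \ref{th: caratterizzazione Plonka suszko ridotta}, which decides precisely which such P\l onka sums are themselves Suszko reduced models of $\vdash^{l}$. The hinge that lets these two results interlock is the observation that, since $\vdash$ is equivalential, $\Modstar(\vdash) = \ModS(\vdash)$; this is exactly what makes the input of Theorem \ref{th: caratterizzazione Plonka suszko ridotta} (directed systems in $\ModS(\vdash)$) coincide with the output of Corollary \ref{Cor:equivalential-partial-result} (P\l onka sums of matrices in $\Modstar(\vdash)$).

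For the direction (ii)$\Rightarrow$(i), I would start from a directed system $X \subseteq \Modstar(\vdash) = \ModS(\vdash)$ satisfying the stated trivial-component condition, with $\langle \A, F \rangle = \PL(X)$. Since $\vdash$ has a partition function and $X$ is a directed system of matrices in $\ModS(\vdash)$, Theorem \ref{th: caratterizzazione Plonka suszko ridotta} applies verbatim, and its implication (ii)$\Rightarrow$(i) yields $\PL(X) = \langle \A, F \rangle \in \ModS(\vdash^{l})$. Nothing beyond a direct invocation is needed here.

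For the converse (i)$\Rightarrow$(ii), I would assume $\langle \A, F \rangle \in \ModS(\vdash^{l})$ and first produce a P\l onka decomposition of it. Because $\vdash$ is equivalential, finitary, and carries a partition function, Corollary \ref{Cor:equivalential-partial-result} gives $\ModS(\vdash^{l}) \subseteq \PL(\Modstar(\vdash))$, so I may fix a directed system $X \subseteq \Modstar(\vdash)$, indexed by a semilattice $I$, with $\PL(X) = \langle \A, F \rangle$. Invoking $\Modstar(\vdash) = \ModS(\vdash)$ once more, $X$ is a directed system of matrices in $\ModS(\vdash)$, so Theorem \ref{th: caratterizzazione Plonka suszko ridotta} applies to it as well; since $\PL(X) \in \ModS(\vdash^{l})$ by hypothesis, its implication (i)$\Rightarrow$(ii) forces the trivial-component condition on $X$. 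Thus this $X$ witnesses condition (ii).

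The real work, as the structure above makes clear, does not sit in this theorem but in the results it quotes. Both directions reduce to Theorem \ref{th: caratterizzazione Plonka suszko ridotta}, whose only nontrivial hypothesis is that the directed system live in $\ModS(\vdash)$; so the single genuinely delicate point is the forward step of obtaining a P\l onka decomposition of $\langle \A, F \rangle$ whose fibers really are Leibniz (equivalently, Suszko) reduced models of $\vdash$, and this is exactly what Corollary \ref{Cor:equivalential-partial-result} supplies, by way of Lemma \ref{lm: ridotti di regolare sono plonka di ridotti non regolari} together with the closure of $\Modstar(\vdash)$ under $\SSS$ and $\PPP$. It is precisely there that the equivalential and finitary assumptions are used. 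I therefore expect no separate obstacle in the present proof: once the fibers are known to belong to $\ModS(\vdash)$, the equivalence of (i) and (ii) is nothing more than Theorem \ref{th: caratterizzazione Plonka suszko ridotta}.
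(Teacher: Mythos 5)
Your proposal is correct and takes essentially the same route as the paper: the paper's own proof is a one-line citation of Theorem \ref{th: caratterizzazione Plonka suszko ridotta} together with Corollary \ref{Cor:equivalential-partial-result}, and your argument simply spells out how these two results combine, including the key identification $\Modstar(\vdash)=\ModS(\vdash)$ for equivalential logics that makes them compose.
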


 \begin{proof}
 This is a consequence of Theorem \ref{th: caratterizzazione Plonka suszko ridotta} and Corollary \ref{Cor:equivalential-partial-result}.
 \end{proof}

\begin{example}\label{Exa:equivalential-logics-with-partition}
Observe that all substructural logics \cite{GaJiKoOn07,Pao02} are finitary, equivalential, and have a partition function. The same holds for all local and global consequences of normal modal logics \cite{MK07c}. As a consequence, the above result provides a description of the Suszko reduced models of the left variable inclusion companions of all substructural and modal logics (when the latter are understood as local and global consequences of normal modal logics \cite{BlRiVe01,ChZa97,Kr99}).
\qed
\end{example}

\subsection{Inconsistency terms}\label{sec: inconsistency}

The following definition originates in \cite{ThomasLavtesi}, but see also \cite{CampercholiRaftery,JGR13}:
\begin{definition}\label{def: iconsistency terms}
A logic $\vdash$ has a set of \emph{inconsistency terms} if there is a set of formulas $\Sigma$ such that $\sigma[\Sigma] \vdash \varphi$ for every substitution $\sigma$ and formula $\varphi$.
\end{definition}

\begin{example}\label{Exa:inconsistency-terms}
For any formula $\varphi$, the set $\{ \lnot(\varphi \to \varphi) \}$ is a set of inconsistency terms for all superintuitionistic logics, all axiomatic extensions of MTL-logic \cite{CiHaNo11ab,EsGo01} including \L ukasiewicz logic \cite{CiMuOt99}, and all local and global consequences of normal modal logics.
\qed
\end{example}

\begin{remark}
Observe that if $\vdash$ has a set of inconsistency terms, then $\vdash$ has a set of inconsistency terms only in variable $x$. If, moreover, $\vdash$ is finitary, then it has a \emph{finite} set of inconsistency terms only in variable $x$. 
\qed
\end{remark}

The goal of this section is to show that if $\vdash$ is a logic with a set of inconsistency terms, then the description of the Suszko reduced models of its variable inclusion companion can be substantially improved (see Theorems \ref{th: caratterizzazione Plonka suszko ridotta} and \ref{Thm: caratterizzazione suszko regolare}), as we show in this section.

The next result discloses the semantic meaning of inconsistency terms. It should be observed that algebraic versions of it first appeared in \cite{Kollar} and \cite{CampercholiVaggione} in the setting of varieties and quasi-varieties of algebras respectively.

\begin{lemma}\label{lemma: caratterizzazione equivalente degli incons. terms}
Let $\vdash$ be a logic. The following are equivalent:
\benroman
\item $\vdash$ has a set of inconsistency terms $\Sigma$.
\item If $\langle\A,F\rangle\in\Mod(\vdash)$ is non-trivial, then it has no trivial submatrix.
\eroman
\end{lemma}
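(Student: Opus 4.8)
The plan is to prove both implications by contraposition, and the engine for both is a normal form for inconsistency terms: \emph{$\vdash$ has a set of inconsistency terms if and only if the set $Fm_{x} = \{ \varphi \in Fm : \Var(\varphi) \subseteq \{ x \}\}$ of formulas in the single variable $x$ is itself such a set, equivalently $\mathrm{Cn}_{\vdash}(Fm_{x}) = Fm$.} One direction is immediate, since $Fm_{x}$ being a set of inconsistency terms is a special case of condition (i). For the converse I would first invoke the Remark to replace any given $\Sigma$ by a set of inconsistency terms living in the variable $x$ alone, and then use monotonicity of $\mathrm{Cn}_{\vdash}$ together with substitution invariance to push the inconsistency up from that set to all of $Fm_{x}$. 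The one delicate point is the passage from ``$\sigma[Fm_{x}] \vdash \varphi$ for the formulas $\varphi$ in the image of a fixed $\sigma$'' to ``for \emph{every} formula $\varphi$''; this is handled by applying the defining property of inconsistency terms to a variable $y \neq x$ not occurring in the relevant formulas and then substituting freely for $y$.

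For (i) $\Rightarrow$ (ii) I would argue contrapositively. Assume $\langle \A, F \rangle \in \Mod(\vdash)$ possesses a trivial submatrix, that is, a subalgebra $\B \leq \A$ with $B \subseteq F$, and show that $\langle \A, F \rangle$ is trivial. Fixing some $b \in B$ and an arbitrary target $a \in A$, I would build a homomorphism $v \colon \Fm \to \A$ sending every variable occurring in $\Sigma$ to $b$ and sending a fresh variable $y$ to $a$. Because $\B$ is closed under the basic operations, $v[\Sigma] \subseteq B \subseteq F$; and since $\Sigma \vdash y$ (a consequence of $\Sigma$ being a set of inconsistency terms) and $\langle \A, F \rangle$ is a model of $\vdash$, we conclude $v(y) = a \in F$. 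As $a$ was arbitrary, $F = A$, so $\langle \A, F \rangle$ is trivial.

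For (ii) $\Rightarrow$ (i), again contrapositively: assuming $\vdash$ has no set of inconsistency terms, the normal form above gives $T \coloneqq \mathrm{Cn}_{\vdash}(Fm_{x}) \neq Fm$. The Lindenbaum-type matrix $\langle \Fm, T \rangle$ is a model of $\vdash$, since every $\vdash$-theory is a $\vdash$-filter on $\Fm$, and it is non-trivial because $T \neq Fm$. Now the subalgebra of $\Fm$ generated by $x$ has universe exactly $Fm_{x}$ (the language has no constants, so this is non-empty), and since $Fm_{x} \subseteq \mathrm{Cn}_{\vdash}(Fm_{x}) = T$ the corresponding submatrix is $\langle \Fm_{x}, Fm_{x} \rangle$, which is trivial. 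Thus $\langle \Fm, T \rangle$ is a non-trivial model carrying a trivial submatrix, contradicting (ii).

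The main obstacle is really just the normal-form lemma, and inside it the ``spare variable'' manoeuvre that upgrades substitution instances to arbitrary conclusions; the remainder is a routine use of the model relation and of the free generation of $\Fm$. A minor convention worth flagging explicitly is that subalgebras, hence submatrices, are taken to be non-empty, so that the empty matrix does not vacuously refute condition (ii).
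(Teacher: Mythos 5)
Your proof is correct and follows essentially the same route as the paper's: the same homomorphism sending $x$ into the trivial submatrix and a spare variable to an arbitrary element for (i)$\Rightarrow$(ii), and the same Lindenbaum-style matrix $\langle \Fm, \mathrm{Cn}_{\vdash}(\mathrm{Sg}^{\Fm}(x))\rangle$, whose submatrix on the subalgebra generated by $x$ is trivial, for (ii)$\Rightarrow$(i). The only organizational difference is where the quantification over arbitrary substitutions is discharged: the paper applies hypothesis (ii) separately to $\mathrm{Sg}^{\Fm}(\sigma(x))$ for each substitution $\sigma$, whereas you apply it once to $\mathrm{Sg}^{\Fm}(x)$ and recover the general case syntactically by structurality (your spare-variable manoeuvre); both are sound.
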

\begin{proof}
(i)$\Rightarrow$(ii): Suppose that $\vdash$ has a set of inconsistency terms $\Sigma$. We can assume w.l.o.g.\ that $\Sigma$ is in variable $x$ only. Suppose, in view of a contradiction, that there is a non-trivial matrix $\pair{\A, F}\in\Mod(\vdash)$ with a trivial submatrix $\pair{\B , B}$. Since $\langle\A,F\rangle$ is non trivial, there exists an element $a\in A\smallsetminus F$. Consider any homomorphism $v\colon\Fm\to\A$ such that $v(x)= b$ and $v(y)=a$, where $b$ is any element of $B$. Since $\Sigma=\Sigma(x)$ and $\pair{\B, B}$ is a submatrix of $\pair{\A, F}$, we have that $v[\Sigma] \subseteq B \subseteq F$. Together with the fact that $\Sigma\vdash y$, this implies that $a=v(y)\in F$, which is a contradiction.

(ii)$\Rightarrow$(i): Let $Fm(x)$ be the set of formulas in variable $x$ only. We show that $Fm(x)$ is a set of inconsistency terms for $\vdash$. To this end, consider a substitution $\sigma$ and a formula $\psi$. It is enough to show that $\sigma[Fm(x)]\vdash \psi$. Let $\varphi\coloneqq\sigma (x)$. Observe that $\sigma[Fm(x)]$ coincides with the universe of the subalgebra $\textup{Sg}^{\Fm}(\varphi)$ of $\Fm$ generated by $\varphi$. Consider the matrices 
\begin{align*}
\mathsf{M}_1\coloneqq & \pair{\Fm, \mathrm{Cn}_{\vdash}(\textup{Sg}^{\Fm}(\varphi))} \\
\mathsf{M}_2\coloneqq & \pair{\textup{Sg}^{\Fm}(\varphi), \textup{Sg}^{\Fm}(\varphi)}. 
\end{align*}
Clearly, $\mathsf{M}_1$ is a model of $\vdash$ and $\mathsf{M}_2$ a trivial submatrix of $\mathsf{M}_1$. By the assumption, we get that $\mathsf{M}_1$ is a trivial matrix, i.e. $ Fm= \mathrm{Cn}_{\vdash}(\textup{Sg}^{\Fm}(\varphi)) $. Hence we conclude that 
$$ \psi\in Fm= \mathrm{Cn}(\textup{Sg}^{\Fm}(\varphi))= \mathrm{Cn}_{\vdash}(\sigma[Fm(x)]).$$
Clearly this implies that $\sigma[Fm(x)] \vdash \psi$, as desired.
\end{proof}

Remarkably, Theorem \ref{th: caratterizzazione Plonka suszko ridotta} can be substantially improved for logics possessing a set of inconsistency terms (whose presence is essential, as shown in Example \ref{Exa:counterexample-inconsistency}): 
 
\begin{theorem}\label{th: Plonka suszo ridotta nel caso con inconsistency term}
Let $\vdash$ be a logic with a partition function and a set of inconsistency terms. For every directed system $X $ of matrices in $\ModS(\vdash)$, the following conditions are equivalent: 
\benroman
\item $\PL(X)\in\ModS(\vdash^{l})$.
\item $X$ contains at most one trivial component. 
\eroman
\end{theorem}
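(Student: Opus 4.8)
The plan is to reduce the statement directly to Theorem~\ref{th: caratterizzazione Plonka suszko ridotta}, which already characterizes when $\PL(X) \in \ModS(\vdash^{l})$ by a purely combinatorial condition on the index semilattice $I$ --- call it $(\ast)$: for every $n, i \in I$ with $\langle \A_{n}, F_{n}\rangle$ trivial and $n < i$, there is $j \in I$ with $n \leq j$, $i \nleq j$ and $\A_{j}$ non-trivial. Since $X$ is a directed system of matrices in $\ModS(\vdash)$, a component $\langle \A_{i}, F_{i}\rangle$ is trivial exactly when it equals $\langle \boldsymbol{1}, \{1\}\rangle$. Thus it suffices to show that, in the presence of a set of inconsistency terms, condition $(\ast)$ is equivalent to requiring that $X$ have at most one trivial component.

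The crucial step is a lemma isolating the order-theoretic content of inconsistency terms: if $\langle \A_{n}, F_{n}\rangle$ is trivial and $n \leq m$, then $\langle \A_{m}, F_{m}\rangle$ is trivial as well; that is, the set of indices carrying a trivial component is an upset of $\langle I, \leq\rangle$. I would prove this by contradiction, assuming $\langle \A_{m}, F_{m}\rangle$ non-trivial. The transition map $f_{nm} \colon \A_{n} \to \A_{m}$ satisfies $f_{nm}[F_{n}] \subseteq F_{m}$, and since $\A_{n} = \boldsymbol{1}$ its image is a one-element subalgebra $\{f_{nm}(1)\}$ with $f_{nm}(1) \in F_{m}$. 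Hence $\langle \{f_{nm}(1)\}, \{f_{nm}(1)\}\rangle$ is a trivial submatrix of the non-trivial model $\langle \A_{m}, F_{m}\rangle$, contradicting Lemma~\ref{lemma: caratterizzazione equivalente degli incons. terms}.

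Granting this upset lemma, the equivalence follows by elementary order theory. For the implication from (ii) to (i), suppose $X$ has at most one trivial component. If it has none, then $(\ast)$ holds vacuously. If it has exactly one, at some index $n$, the upset lemma forces every $m \geq n$ to carry a trivial component; as $n$ is the unique trivial index, there is no $m > n$, so $n$ is maximal and the hypothesis of $(\ast)$ is never met. For the converse I would argue contrapositively: if $X$ has two distinct trivial components $n_{1} \neq n_{2}$, their join $m = n_{1} \lor n_{2}$ is trivial by the upset lemma, and since not both $n_{1}, n_{2}$ can equal $m$, at least one of them, say $n$, satisfies $n < m$ and carries a trivial component. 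Taking $i := m$, the upset lemma makes every $j \geq n$ trivial, so no $j$ witnessing $(\ast)$ for this pair $n < i$ can exist, and $(\ast)$ fails. Combining these two directions with Theorem~\ref{th: caratterizzazione Plonka suszko ridotta} yields the claimed equivalence.

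The only point at which the hypothesis on inconsistency terms is used --- and the main obstacle --- is the upset lemma; once it is in place, everything reduces to manipulating the semilattice order. The one subtlety worth double-checking is the boundary case in the converse where $n_{1}$ or $n_{2}$ already equals the join $m$, which is precisely why one selects the index $n$ that lies \emph{strictly} below $m$.
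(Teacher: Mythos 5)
Your proof is correct, and your key lemma --- that the presence of inconsistency terms makes the set of indices with a trivial component an upset of $\langle I, \leq\rangle$ --- is exactly the claim the paper establishes first, via the same appeal to Lemma \ref{lemma: caratterizzazione equivalente degli incons. terms} applied to the trivial submatrix $\pair{f_{nm}[A_n], f_{nm}[F_n]}$. Your treatment of (ii)$\Rightarrow$(i) also matches the paper: with at most one trivial component the upset property forces that component to sit at the top of $I$, so the hypothesis of condition $(\ast)$ is vacuous and Theorem \ref{th: caratterizzazione Plonka suszko ridotta} applies. Where you genuinely diverge is in (i)$\Rightarrow$(ii). The paper does \emph{not} route this direction through Theorem \ref{th: caratterizzazione Plonka suszko ridotta}; instead it argues directly: given two distinct trivial components with elements $1_n$ and $1_k$, the upset property implies that every polynomial image of $1_n$ or $1_k$ lands in a trivial component and hence in the filter $F$, so by the polynomial characterization of the Suszko congruence (Lemma \ref{lem:polynomial charact. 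Suszko cong.}) the pair $\langle 1_n, 1_k\rangle$ lies in $\Tarski^{\A}_{\vdash}F$, contradicting Suszko reducedness. Your version instead invokes the forward direction of Theorem \ref{th: caratterizzazione Plonka suszko ridotta} contrapositively and checks, by pure order theory, that two trivial indices destroy condition $(\ast)$: the join $m = n_1 \lor n_2$ is trivial, some $n \in \{n_1, n_2\}$ is strictly below it, and the upset property leaves no non-trivial $j \geq n$ to serve as a witness. Both arguments are sound; yours is more modular, since it reuses the characterization theorem rather than re-running the Suszko-congruence computation, while the paper's direct argument is self-contained for that direction and makes visible exactly where reducedness fails (the two trivial tops get identified). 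Your attention to the boundary case where one of $n_1, n_2$ equals the join is the right point to be careful about, and you resolve it correctly.
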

\begin{proof}
For the sake of simplicity, throughout the proof we set
 \[
 X=\langle \langle I,\leq\rangle , \{ \langle \A_{i},F_{i}\rangle\}_{i\in I}, \{ f_{ij} \! : \! i \leq j \}\rangle.
\]
First we claim that if a component $\pair{\A_n , F_n}$ of $X$ is trivial, then so is $\pair{\A_k, F_k}$, for every $k\geq n$. To prove this, consider a trivial component $\pair{\A_{n} , F_{n}}$ of $X$ and $k \geq n$. Observe that
\[
f_{nk}[A_n]= f_{nk}[F_n] \subseteq F_k.
\]
 Then $\pair{f_{nk}[A_n],f_{nk}[F_n]}$ is a trivial submatrix of $\pair{\A_k , F_k}$. Since $\vdash$ has a set of inconsistency terms, we can apply Lemma \ref{lemma: caratterizzazione equivalente degli incons. terms} obtaining that $\pair{\A_k , F_k}$ is trivial. This establishes the claim.

(i)$\Rightarrow$(ii): Suppose, in view of a contradiction,  that $\PL(X)\in\ModS(\vdash^{l})$ and that $X$ contains two distinct trivial components $\pair{\mathbf{1}_n , \{ 1_{n}\}}$ and $\pair{\mathbf{1}_k, \{1_{k}\}}$ (their algebraic reducts are trivial, as the components of $X$ belong to $\ModS(\vdash)$). Set $\pair{\A, F}\coloneqq\PL(X)$. Observe that, for every formula $\varphi(x,\vec{z}\?\?)$ in which $x$ really occurs, and every tuple $\vec{c}\in A$, we have that
\[
\varphi^{\A}(1_{n}, \vec{c}\?\?), \varphi^{\A}(1_{k}, \vec{c}\?\?)\in F.
\]
To prove this, observe that the element $\varphi^{\A}(1_{n}, \vec{c}\?\?)$ belongs to a component $\pair{\A_l, F_l}$ of $X$ with $n\leq l$. By the previous claim, we know that $\pair{\A_l, F_l}$ is trivial and, therefore, that $\varphi^{\A}(1_{n}, \vec{c}\?\?) \in F_l \subseteq F$, as desired. A similar argument shows that $\varphi^{\A}(1_{k}, \vec{c}\?\?) \in F$ as well. Hence for every unary polynomial function $p$ of $\A$ we have that
\[
\textup{Fg}_{\vdash^{l}}^{\A}(F \cup \{ p(1_{n})\}) = \textup{Fg}_{\vdash^{l}}^{\A}(F \cup \{ p(1_{k})\}).
\]
By Lemma \ref{lem:polynomial charact. Suszko cong.} this implies that $\langle 1_{n}, 1_{k}\rangle \in \Tarski^{\A}_{\vdash}F$. Since $\pair{\A, F} \in \ModS(\vdash^{l})$, this implies that $1_n = 1_k$, which is a contradiction.

(ii)$\Rightarrow$(i): Suppose that $X$ contains at most one trivial matrix. If $X$ contains no trivial component, then, by Theorem \ref{th: caratterizzazione Plonka suszko ridotta}, we obtain that $\PL(X)\in\ModS(\vdash^{l})$. Then consider the case where $X$ contains exactly one trivial component. By the claim we obtain that this component is the maximum of $\langle I, \leq \rangle$. Again, with an application of Theorem \ref{th: caratterizzazione Plonka suszko ridotta}, we conclude that $\PL(X)\in\ModS(\vdash^{l})$.
\end{proof}
The assumption on the existence of a set of inconsistency terms for the logic $\vdash$ in the above theorem is essential, as shown in Example \ref{Exa:counterexample-inconsistency}. 

Drawing consequences from Theorem \ref{th: Plonka suszo ridotta nel caso con inconsistency term}, we obtain a very transparent description of the Suszko reduced models of the variable inclusion companion of a finitary equivalential logic with a partition function and inconsistency terms:

\begin{theorem}\label{Thm: caratterizzazione suszko regolare2}
Let $\vdash$ be an equivalential and finitary logic with a partition function and inconsistency terms, and $\langle\A, F \rangle$ be a matrix. The following conditions are equivalent:
\benroman
\item $\langle \A, F \rangle \in \ModS(\vdash^{l})$.
\item There exists a directed system of matrices $X\subseteq\Modstar(\vdash)$ with at most one trivial component such that $\langle \A, F \rangle = \PL(X)$.
\eroman
 \end{theorem}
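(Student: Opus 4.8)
The plan is to derive the statement as a direct combination of Theorem \ref{Thm: caratterizzazione suszko regolare} and Theorem \ref{th: Plonka suszo ridotta nel caso con inconsistency term}. The key structural remark that makes the two results line up is that, since $\vdash$ is equivalential, we have $\ModS(\vdash) = \Modstar(\vdash)$; consequently a directed system whose components lie in $\Modstar(\vdash)$ is the same thing as a directed system whose components lie in $\ModS(\vdash)$, and the latter is precisely the hypothesis under which Theorem \ref{th: Plonka suszo ridotta nel caso con inconsistency term} applies. So the whole argument amounts to translating the rather technical comparability condition appearing in Theorem \ref{Thm: caratterizzazione suszko regolare} into the clean condition \textquotedblleft at most one trivial component\textquotedblright\ via the inconsistency-terms result.

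For the implication (i)$\Rightarrow$(ii), I would start from $\langle \A, F\rangle \in \ModS(\vdash^{l})$ and apply Theorem \ref{Thm: caratterizzazione suszko regolare}. This yields a directed system $X \subseteq \Modstar(\vdash)$ with $\langle \A, F\rangle = \PL(X)$ and satisfying the comparability condition on trivial components. Since $\vdash$ is equivalential, $\Modstar(\vdash) = \ModS(\vdash)$, so $X$ is a directed system of matrices in $\ModS(\vdash)$, and moreover $\PL(X) = \langle \A, F\rangle \in \ModS(\vdash^{l})$. Because $\vdash$ additionally has a partition function and a set of inconsistency terms, the hypotheses of Theorem \ref{th: Plonka suszo ridotta nel caso con inconsistency term} are met, and its direction (i)$\Rightarrow$(ii) forces $X$ to contain at most one trivial component. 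This is exactly condition (ii).

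For the converse (ii)$\Rightarrow$(i), I would take the given directed system $X \subseteq \Modstar(\vdash) = \ModS(\vdash)$ with at most one trivial component and $\langle \A, F\rangle = \PL(X)$. The direction (ii)$\Rightarrow$(i) of Theorem \ref{th: Plonka suszo ridotta nel caso con inconsistency term} then gives directly that $\PL(X) = \langle \A, F\rangle \in \ModS(\vdash^{l})$, which is condition (i).

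Since the theorem is essentially a synthesis of two earlier results, there is no substantial combinatorial obstacle here; the only point requiring care is the bookkeeping of hypotheses. Specifically, one must check that the directed system produced by Theorem \ref{Thm: caratterizzazione suszko regolare} genuinely satisfies the membership requirement $X \subseteq \ModS(\vdash)$ needed to invoke Theorem \ref{th: Plonka suszo ridotta nel caso con inconsistency term}; this is precisely where equivalentiality (through the collapse $\ModS(\vdash) = \Modstar(\vdash)$) is used, and it is the hinge on which the whole argument turns.
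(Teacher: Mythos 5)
Your proposal is correct and follows exactly the paper's own proof, which derives the theorem as a combination of Theorem \ref{Thm: caratterizzazione suszko regolare} and Theorem \ref{th: Plonka suszo ridotta nel caso con inconsistency term}; your explicit use of the collapse $\Modstar(\vdash) = \ModS(\vdash)$ for equivalential logics is precisely the bookkeeping the paper leaves implicit. Nothing further is needed.
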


 \begin{proof}
 This is a combination of Theorems \ref{th: Plonka suszo ridotta nel caso con inconsistency term} and \ref{Thm: caratterizzazione suszko regolare}.
 \end{proof}

\begin{example}
It is worth to observe that the above result provides a full description of the Suszko reduced models of the left variable inclusion companions of most well-known logics, including all logics mentioned in Example \ref{Exa:inconsistency-terms}.
\qed
\end{example}



\section{Classification in the Leibniz hierarchy}\label{sec: Leibniz hierarchy}


We conclude this work by investigating the location of logics of variable inclusion in the Leibniz hierarchy. To this end, recall that a logic $\vdash$ is inconsistent if $\Gamma \vdash \varphi$ for every $\Gamma \cup \{ \varphi \} \subseteq Fm$. Equivalently, $\vdash$ is inconsistent if $\emptyset \vdash x$ for some variable $x$. A logic is \textit{consistent} when it is not inconsistent.

\begin{theorem}\label{the regularisation is not protoalg}
Let $\vdash$ be a logic.
\benroman
\item If $\vdash$ is consistent, then $\vdash^{l}$ is not protoalgebraic.
\item If $\vdash$ is finitary, algebraizable and has a partition function, then $\vdash^{l}$ is truth-equational.
\eroman
\end{theorem}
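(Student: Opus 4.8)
The plan is to treat the two parts separately, relying for (i) only on the definition of $\vdash^{l}$ and for (ii) on the decomposition of reduced models already established in Lemma~\ref{lm: ridotti di regolare sono plonka di ridotti non regolari}.

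For part (i) I would argue by contradiction. Suppose $\vdash^{l}$ is protoalgebraic, witnessed by a set of formulas $\Delta(x,y)$ in the variables $x,y$, so that $\emptyset \vdash^{l} \Delta(x,x)$ and $x, \Delta(x,y) \vdash^{l} y$. Unravelling the second condition through Definition~\ref{def: regolarizzazione}, there is a subset $\Gamma' \subseteq \{x\} \cup \Delta(x,y)$ with $\Var(\Gamma') \subseteq \Var(y) = \{y\}$ and $\Gamma' \vdash y$. The variable-inclusion constraint is the crux: since $\Var(x) = \{x\} \nsubseteq \{y\}$, and since any $\delta \in \Delta(x,y)$ lying in $\Gamma'$ must satisfy $\Var(\delta) \subseteq \{y\}$, the set $\Gamma'$ consists solely of formulas $\delta$ in which $x$ does not really occur; I write each such $\delta$ as $\delta(y)$.

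Next I would feed in the reflexivity condition. Since for empty premise sets $\vdash^{l}$ and $\vdash$ coincide, $\emptyset \vdash^{l} \Delta(x,x)$ yields $\emptyset \vdash \delta(x,x)$ for each $\delta \in \Delta$. For the members $\delta = \delta(y)$ of $\Gamma'$ this reads $\emptyset \vdash \delta(x)$, whence $\emptyset \vdash \delta(y)$ by substitution invariance. Thus $\emptyset \vdash \gamma$ for every $\gamma \in \Gamma'$, and combining this with $\Gamma' \vdash y$ via cut yields $\emptyset \vdash y$ (the case $\Gamma' = \emptyset$ gives $\emptyset \vdash y$ directly). This makes $\vdash$ inconsistent, contradicting the hypothesis, so no such $\Delta(x,y)$ exists and $\vdash^{l}$ is not protoalgebraic. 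This part is self-contained; the only real content is the observation that the left variable-inclusion restriction annihilates $x$ and every $\delta$ containing $x$.

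For part (ii) the idea is that the defining equations of $\vdash$ already serve as defining equations for $\vdash^{l}$. Since $\vdash$ is algebraizable it is truth-equational; fix a set $\boldsymbol{\tau}(x)$ of defining equations for $\vdash$, each of the form $\delta(x) \approx \epsilon(x)$ in the single variable $x$. Because $\vdash$ is equivalential (being algebraizable), finitary, and has a partition function, Lemma~\ref{lm: ridotti di regolare sono plonka di ridotti non regolari} gives $\Modstar(\vdash^{l}) \subseteq \PL(\Modstar(\vdash))$, so an arbitrary $\langle \A, F\rangle \in \Modstar(\vdash^{l})$ can be written as $\PL(X)$ with $\A = \PL(\A_i)_{i\in I}$ and $F = \bigcup_{i\in I}F_i$, each $\langle \A_i, F_i\rangle \in \Modstar(\vdash)$. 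The remaining point to check is that $\boldsymbol{\tau}$ defines truth on this matrix, and here I would use that a \emph{single-variable} formula $\varphi(x)$ evaluated at $a \in A_i$ satisfies $\varphi^{\PL}(a) = \varphi^{\A_i}(a)$, since the relevant join index is $i$ itself and $f_{ii}$ is the identity. Consequently, for $a \in A_i$ and each $\delta \approx \epsilon$ in $\boldsymbol{\tau}$ we have $\delta^{\A}(a) = \delta^{\A_i}(a)$ and $\epsilon^{\A}(a) = \epsilon^{\A_i}(a)$, so $\A \vDash \boldsymbol{\tau}(a)$ iff $\A_i \vDash \boldsymbol{\tau}(a)$; as $\langle \A_i, F_i\rangle \in \Modstar(\vdash)$ and $\boldsymbol{\tau}$ defines truth for $\vdash$, this is equivalent to $a \in F_i$, i.e.\ to $a \in F$. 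Hence $a \in F \Longleftrightarrow \A \vDash \boldsymbol{\tau}(a)$ for all $a \in A$, showing $\boldsymbol{\tau}(x)$ is a set of defining equations for $\vdash^{l}$ and that $\vdash^{l}$ is truth-equational. The heavy lifting here is imported from Lemma~\ref{lm: ridotti di regolare sono plonka di ridotti non regolari}; once the Płonka decomposition of Leibniz reduced models is in hand, the transfer of defining equations is the easy single-variable computation above, and that decomposition is the only place where finitarity, equivalentiality, and the partition function are genuinely needed.
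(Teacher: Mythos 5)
Your proof is correct and takes essentially the same route as the paper's: part (i) is the same contraposition argument, using the left variable-inclusion constraint to extract from $x,\Delta(x,y)\vdash^{l} y$ a subset $\Gamma'\subseteq\Delta(x,y)$ of formulas in $y$ alone with $\Gamma'\vdash y$, then deriving $\emptyset\vdash y$ from reflexivity plus substitution invariance and cut; part (ii) transfers the defining equations $\boldsymbol{\tau}(x)$ of $\vdash$ to $\vdash^{l}$ through the P\l onka decomposition of Leibniz reduced models given by Lemma \ref{lm: ridotti di regolare sono plonka di ridotti non regolari}, exactly as in the paper. The only cosmetic differences are that you pass from $\vdash^{l}$ down to $\vdash$ slightly earlier in (i), and in (ii) you make explicit the single-variable evaluation $\varphi^{\PL}(a)=\varphi^{\A_{i}}(a)$ for $a\in A_{i}$, which the paper leaves implicit.
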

\begin{proof}
(i): We reason by contraposition. Suppose that $\vdash^{l}$ is protoalgebraic. Then there is a set of formulas $\Delta(x, y)$ such that $\emptyset \vdash^{l}\Delta(x, x)$ and $x, \Delta(x, y) \vdash^{l} y$. Thus, the definition of $\vdash^{l}$ implies that there is a subset $\Sigma(y) \subseteq \Delta(x, y)$ such that $\Sigma(y) \vdash y$. Since $\emptyset \vdash^{l} \Delta(x, x)$, we have that $\emptyset \vdash^{l} \Sigma(y)$. From $\Sigma(y) \vdash y$ and $\emptyset \vdash^{l} \Sigma(y)$ it follows that $\emptyset \vdash^{l} y$. By the definition of $\vdash^{l}$ we conclude that $\emptyset \vdash y$ and, therefore, that $\vdash$ is inconsistent.

(ii): Suppose that $\vdash$ is finitary, algebraizable and has a partition function. In particular, $\vdash$ is truth-equational with set of defining equations $\boldsymbol{\tau}(x)$. We will show that $\boldsymbol{\tau}(x)$ is a set of defining equations for $\vdash^{l}$ as well. To this end, consider $\langle \A, F \rangle \in \Modstar(\vdash^{l})$. Since $\vdash$ is finitary, equivalential and with a partition function, we can apply Lemma \ref{lm: ridotti di regolare sono plonka di ridotti non regolari} obtaining that there exists a directed system of matrices $X \subseteq \Modstar(\vdash)$ such that $\langle \A, F \rangle = \PL(X)$. For the sake of simplicity, we set
 \[
 X=\langle \langle I,\leq\rangle , \{ \langle \A_{i},F_{i}\rangle\}_{i\in I}, \{ f_{ij} \! : \! i \leq j \}\rangle
\]
and assume w.l.o.g.\ that $\langle \A, F \rangle = \PL(X)$. Consider an element $a \in A$. There is $i \in I$ such that $a \in A_{i}$. We have that
\begin{equation}\label{Eq:defining-equations-in-plonka}
\A \vDash \boldsymbol{\tau}(a) \Longleftrightarrow \A_{i} \vDash \boldsymbol{\tau}(a) \Longleftrightarrow a \in F_{i} \Longleftrightarrow a \in F.
\end{equation}
The above equivalences are justified as follows. The first one follows from the fact that $\A = \PL(\A_{i})_{i \in I}$. The second one follows from the fact that $\langle \A_{i}, F_{i}\rangle \in \Modstar(\vdash)$ and that $\boldsymbol{\tau}(x)$ is a set of defining equations for $\vdash$. The last one follows from the observation that $\langle \A, F \rangle = \PL(X)$.

By (\ref{Eq:defining-equations-in-plonka}) we obtain that for every $a \in A$,
\[
\A \vDash \boldsymbol{\tau}(a) \Longleftrightarrow a \in F.
\]
Hence we conclude that $\boldsymbol{\tau}(x)$ is a set of defining equations for $\vdash^{l}$ and, therefore, $\vdash^{l}$ is truth-equational.
\end{proof}

In \cite[Theorem 48]{Bonzio16} it is proved that the variety of involutive bisemilattices, i.e.\ the closure under Plonka sums of the variety of Boolean algebras \cite{tesiLuisa}, is not the equivalent algebraic semantics of \emph{any} algebraizable logic. This result can be strengthened as follows:

\begin{theorem}
Let $\mathsf{K}$ be a class of algebras containing two trivial algebras and closed under P\l onka sums. There is no protoalgebraic logic $\vdash$ such that $\Alg(\vdash) = \mathsf{K}$.
\end{theorem}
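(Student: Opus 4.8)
The plan is to use the hypotheses to manufacture a concrete two-element algebra in $\Alg(\vdash)$ whose reduced filters are incompatible with protoalgebraicity. First I would observe that, since $\mathsf{K}$ contains two trivial algebras with disjoint universes and is closed under P\l onka sums, the P\l onka sum of these two trivial algebras over the two-element chain belongs to $\mathsf{K}$. This is exactly the algebra $\A \coloneqq \boldsymbol{1} \oplus \boldsymbol{1}$ of the Lakser construction recalled before Theorem \ref{thm:completeness}: it has a two-element universe, say $\{0,1\}$, every basic operation returns $0$ on the all-$0$ argument and returns $1$ as soon as one argument equals $1$. Consequently both $\{0\}$ and $\{1\}$ are subuniverses of $\A$, so the two constant maps $h_{0},h_{1}\colon \A\to\A$ are endomorphisms of $\A$. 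Now assume, towards a contradiction, that there is a protoalgebraic logic $\vdash$ with $\Alg(\vdash)=\mathsf{K}$.

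Since $\A\in\mathsf{K}=\Alg(\vdash)$, by definition there is $F\subseteq\{0,1\}$ with $\langle\A,F\rangle\in\ModS(\vdash)$. As $\vdash$ is protoalgebraic, $\ModS(\vdash)=\Modstar(\vdash)$, so $\Leibniz^{\A}F$ is the identity relation. Both $\Leibniz^{\A}\emptyset$ and $\Leibniz^{\A}A$ equal the full relation $A\times A$ (every congruence is vacuously compatible with $\emptyset$ and with $A$), and $A\times A$ is not the identity on the two-element set $A$; this rules out $F=\emptyset$ and $F=A$, forcing $F$ to be a singleton. I then fix some $c\in A\smallsetminus F$.

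The crux is to show that $\emptyset$ is itself a $\vdash$-filter on $\A$. For this I use the constant endomorphism $h_{c}$ with value $c$: preimages of $\vdash$-filters under homomorphisms are again $\vdash$-filters, so $h_{c}^{-1}(F)$ is a filter, and $h_{c}^{-1}(F)=\emptyset$ precisely because $c\notin F$. This is exactly where the ``two trivial algebras plus P\l onka sums'' hypothesis is genuinely used: it is what produces a two-element algebra carrying a constant endomorphism whose value avoids $F$. I expect this step, together with the bookkeeping that $F$ must be a proper non-empty singleton, to be the main obstacle, since everything else is formal.

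From here there are two ways to finish. The slick route invokes the standard characterization (Blok--Pigozzi; see \cite{Font16}) that a logic is protoalgebraic iff its Leibniz operator is monotone on the filter lattice of every algebra: then $\emptyset\subseteq F$ yields $A\times A=\Leibniz^{\A}\emptyset\subseteq\Leibniz^{\A}F=\text{identity}$, which is absurd as $|A|=2$. Staying instead within the notions recalled in the excerpt, since $\langle\A,\emptyset\rangle$ is now a model of $\vdash$ and protoalgebraicity supplies a set $\Delta(x,y)$ with $\emptyset\vdash\Delta(x,x)$, any $\delta\in\Delta$ would force $h(\delta(x,x))\in\emptyset$ for every homomorphism $h$, an impossibility; hence $\Delta=\emptyset$, and the second protoalgebraicity condition $x,\Delta(x,y)\vdash y$ collapses to $x\vdash y$. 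But $\langle\A,F\rangle$ is a model with $F$ a proper non-empty subset, so the assignment sending $x$ to an element of $F$ and $y$ to $c\notin F$ witnesses the failure of $x\vdash y$, giving the desired contradiction. Either way, no protoalgebraic $\vdash$ can satisfy $\Alg(\vdash)=\mathsf{K}$.
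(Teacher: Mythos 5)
Your proof is correct --- both of your closing routes go through --- but it is genuinely different from the paper's argument, even though both start from the same algebra $\A=\boldsymbol{1}_{a}\oplus\boldsymbol{1}_{b}$ and the same matrix $\langle\A,F\rangle\in\ModS(\vdash)$. The paper derives the contradiction from the \emph{top} of the filter lattice: since $\Alg(\vdash)$ contains the non-trivial algebra $\A$, one has $x\nvdash y$, so the protoalgebraicity set $\Delta(x,y)$ is non-empty; picking $\varphi\in\Delta$, the theorem $\varphi(x,x)$ has $x$ really occurring (no constant symbols), so on $\A$ it takes both values $1_{a}$ and $1_{b}$, whence $A$ itself is the smallest --- hence unique --- $\vdash$-filter on $\A$; then $F=A$ makes $\langle\A,A\rangle$ a trivial Suszko-reduced model, forcing $\A$ to be the one-element algebra, a contradiction. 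You instead work at the \emph{bottom}: reducedness (through $\Modstar(\vdash)=\ModS(\vdash)$) rules out $F\in\{\emptyset,A\}$, and the constant endomorphism onto the singleton subuniverse avoiding $F$ pulls $F$ back to $\emptyset$, so $\emptyset$ is a $\vdash$-filter; the contradiction then follows either from monotonicity of $\Leibniz$ on filters (your first route) or from the fact that a model with empty filter over a non-empty algebra admits no theorems, so $\Delta=\emptyset$ and protoalgebraicity collapses to $x\vdash y$, which $\langle\A,F\rangle$ refutes (your second route). Note the curious duality: the paper's intermediate conclusion is that $\emptyset$ is \emph{not} a filter on $\A$, yours is that it \emph{is}; both are legitimate inside the respective reductios. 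As for what each approach buys: your second route is exactly as elementary as the paper's proof and stays within the notions the paper states, while your first route imports the Blok--Pigozzi characterization of protoalgebraicity as monotonicity of the Leibniz operator, which is standard but never stated in the paper. Your argument also localizes the hypothesis ``two trivial algebras plus closure under P\l onka sums'' differently: for you it produces singleton subuniverses, hence constant endomorphisms, hence the empty filter; for the paper it produces a two-element algebra on which any theorem in one variable must take both values. The only steps you should justify explicitly in a final write-up are the two routine facts you use in passing: that preimages of $\vdash$-filters under homomorphisms are $\vdash$-filters, and that $h_{c}$ is an endomorphism because $\{c\}$ is a subuniverse (which itself rests on the absence of constant symbols).
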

\begin{proof}
Suppose, in view of a contradiction, that there are a class of algebras $\mathsf{K}$ containing two trivial algebras and closed under P\l onka sums, and a protoalgebraic logic $\vdash$ such that $\Alg(\vdash) = \mathsf{K}$. 
Let $\mathbf{1}_a, \mathbf{1}_b \in \mathsf{K} $ be distinct trivial algebras and consider the directed system obtained by the homomorphism $f_{a b} \colon \mathbf{1}_{a} \to \mathbf{1}_{b}$ ($a\leq b$ in the semilattice order of the indexes). Let $\A=\mathbf{1}_{a} \oplus \mathbf{1}_{b}$ be the P\l onka sum of this directed system.
Clearly $\A \in \mathsf{K}$. Therefore there is $F \subseteq A$ such that $\langle \A, F \rangle \in \ModS(\vdash)$. 
As $\Alg(\vdash)$ contains a non-trivial algebra, it is not difficult to see that $x \nvdash y$. Since $\vdash$ is protoalgebraic, there is a set of formulas $\Delta(x, y)$ such that $\emptyset \vdash \Delta(x, x)$ and $x, \Delta(x, y) \vdash y$. Since $x \nvdash y$ and $x, \Delta(x, y) \vdash y$, we conclude that $\Delta(x, y) \ne \emptyset$. Then consider $\varphi(x, y) \in \Delta(x, y)$. Since $\emptyset \vdash \Delta(x, x)$, we conclude that $\emptyset \vdash \varphi(x, x)$. 

Now, observe that the variable $x$ really occurs in $\varphi(x, x)$, since we do not allow the presence of constant symbols in this paper. Hence we obtain that
\[
\varphi^{\A}(1_{a}, 1_{a}) = 1_{a} \text{ and }\varphi^{\A}(1_{b}, 1_{b}) = 1_{b}.
\]
Together with the fact that $\emptyset \vdash \varphi(x, x)$, this implies that $A=\{1_{a},1_{b}\}$ is the smallest $\vdash$-filter on $\A$. In particular, this implies that  $A$ is the \textit{unique} $\vdash$-filter on $\A$. Since $F$ is a $\vdash$-filter on $\A$, we conclude that $A = F$. Hence $\langle \A, A\rangle$ is a Suszko reduced model of $\vdash$. This implies that $\A$ is trivial, which is false.
\end{proof}

\section*{Appendix}

Aim of this section is showing that some of the assumptions are indeed essential in order to prove certain results. In particular, the following example shows that, in general, there can be Suszko reduced models of the logic $\vdash^{l}$ that are not P\l onka sums of Suszko reduced models of $\vdash$.

\begin{example}\label{counterexample suszko red.}
Consider the logic $\vdash$  determined by the following class of matrices:
\[
\mathsf{M} \coloneqq \{ \langle \A, F \rangle : \A \text{ is a distributive lattice and }F \text{ is an upset}\}.
\]
Let $\A_{1}$ be the three element lattice $a < b < c$ and let $F_{1} = \{ b, c \}$. Moreover, let $\A_{2}$ be the four-element Boolean lattice (with universe $\{ 0, d, e, 1\}$ with $0$ as bottom element), and let $F_{2} = A_{2} \smallsetminus \{ 0 \}$. Clearly both $\langle \A_{1}, F_{1}\rangle$ and $\langle \A_{2}, F_{2}\rangle$ are models of $\vdash$ (as they belong to $\mathsf{M}$). However, it is easy to see that $\langle \A_{1}, F_{1}\rangle \notin \ModS(\vdash)$. Now, let $f \colon \A_{1} \to \A_{2}$ be any of the two embeddings of $\A_{1}$ into $\A_{2}$. Clearly these two matrices plus $f$ give rise to a directed system $X$ of matrices (of course one should pedantically add the identity endomorphisms) depicted in the following figure. We denote by $\langle \B, G\rangle$ the P\l onka sum $\PL(X)$.

\begin{center}
\[\begin{tikzcd}[row sep = tiny]
& & & 1 &\\
& & & &\\
c\arrow[uurrr] & & & & \\
& & & & \\
b\arrow[uu, dash]\arrow[rr] & & d\arrow[uuuur, dash] & & e\arrow[uuuul, dash] \\
& & & & \\
a\arrow[uu, dash]\arrow[drrr] & & & & \\
& &  & 0\arrow[uuul, dash]\arrow[uuur, dash] & \\
& & & & 
\end{tikzcd}\]
\end{center}

Since $\langle \A_{1}, F_{1}\rangle$ and $\langle \A_{2}, F_{2}\rangle$ are models of $\vdash$, by Lemma \ref{lem:soundness} $\langle \B, G\rangle$ is a model of $\vdash^{l}$. We now show that it is indeed Suszko reduced. Elements belonging to the algebra $\A_1$, as for example $b$ and $c$ (any other pair of elements in $\A_1$ is distinguished by the identity function), can be distinguished by means of the function $\wedge^{\B}$, the filter G and the element $e$, as follows: 
\[ 
b\wedge^{\B} e = d\wedge^{\A_{2}} e = 0\not\in G 
\]  
\[ 
c\wedge^{\B} e = 1\wedge^{\A_{2}} e = e\in G. 
\]  
One can reason similarly (using $G$ as filter) for pairs of elements belonging to $\A_2$ (we illustrate the only interesting case):
\[ 
d\wedge^{\B} b = d\wedge^{\A_{2}} d = d\in G 
\]  
\[ 
e\wedge^{\B} b = e\wedge^{\A_{2}} d = 0\not\in G. 
\]  

On the other hand, pairs of elements belonging to different algebras are distinguished by considering the filter $H\coloneqq F_1\cup A_2$ on $\B$ (the fact that it is a filter is guaranteed by Lemma \ref{lem:basic-facts}) , the function $\wedge^{\B}$ and the element $a$. Consider, for instance, the elements $b$ and $d$:
\[
b\wedge^{\B} a = a\not\in H; 
\] 
\[
d\wedge^{\B} a = d\wedge^{\A_2} 0 = 0\in H. 
\] 
\noindent
This is enough to show that $\pair{\B , G}$ is Suszko reduced.

To conclude the example we need to disprove that $\langle \B, G \rangle$ is a P\l onka sum of any Suszko reduced models of $\vdash$. Suppose that $\langle \B, G\rangle$ is the P\l onka sum of a directed system $Y$ of Suszko reduced models $\langle \B_{1}, G_{1}\rangle, \dots, \langle \B_{n}, G_{n}\rangle$ of $\vdash$. First observe that $n \leq 2$. Suppose the contrary towards a contradiction. Then $n \geq 3$. We choose three elements $b_{1} \in B_{1}, b_{2} \in B_{2}$ and $b_{3} \in B_{3}$. Clearly $b_{1}, b_{2}$ and $b_{3}$ are different. Moreover, for every $1 \leq i  < j \leq 3$ we have that either $b_{i}\cdot^{\B} b_{j} \ne b_{i}$ or $b_{j}\cdot^{\B} b_{i} \ne b_{j}$, where $\cdot$ indicates the partition function, i.e. $x\cdot y\coloneqq x\wedge(x\lor y)$. It is easy to see that no such three elements exist in $\B$, which is a contradiction. Hence $n \leq 2$. We have cases. If $n =1$, then $\langle \B_{1}, G_{1}\rangle = \langle \B, G\rangle$. In particular, this implies that $\langle \B, G\rangle \in \ModS(\vdash)$ and, therefore, $\B \in \Alg(\vdash)$. By Lemma \ref{lem:algebraicreducts} this implies that $\B$ is a lattice, which is false. Thus, the only possible case is that $n = 2$. Now, by Lemma \ref{lem:algebraicreducts} we know that $\B_{1}$ and $\B_{2}$ are distributive lattices. Since the only way of partitioning $\B$ into two subalgebras that are distributive lattices is $\{ \A_{1}, \A_{2}\}$, we conclude that w.l.o.g.\ $\B_{1} = \A_{1}$ and $\B_{2} = \A_{2}$, i.e. $\langle \B, G \rangle$ can not be the P\l onka sum of any Suszko reduced models of $\vdash$.
\qed
\end{example}


\begin{example}\label{Exa:counterexample-inconsistency}
The statement of Theorem \ref{th: Plonka suszo ridotta nel caso con inconsistency term} is in general false for logics without a set of inconsistency terms, as witnessed by the following example based on $\CL^{\land\lor}$, the conjunction and disjunction fragment of classical propositional logic (which does not possess a set of inconsistency terms). In particular, it happens to have a Suszko reduced model of $\vdash^{l}$, which is the P\l onka sum of a directed system of Suszko reduced models of $\vdash$ containing two trivial matrices.

Let $\vdash$ be the $\land,\lor$-fragment of classical propositional logic. Moreover, let $\mathbf{1}$ be the trivial lattice and $\mathbf{L}_{2}=\pair{\{ \perp,\top\}, \land, \lor}$ the 2-element distributive lattice (with $\perp < \top$). Consider the directed system $X$ of matrices formed by 6 copies of the matrix $\langle \mathbf{L}_{2}, \{\top\}\rangle$ and two trivial matrices $\langle\mathbf{1},\{1\}\rangle$ sketched in the following figure 
(lines represent lattice order in the P\l onka fibers, arrows, the homomorphisms, and circles, filters in any fiber). 

\[\begin{tikzcd}[row sep = tiny, arrows = {dash}]
& & & \circled{$\bullet$} &  & &  \\
& & & &  & & \\
& & & &  & & \\
& &  \circled{$\bullet$}\ar[uuur, ->] & \bullet\ar[uuu] & \circled{$\bullet$}\ar[uuul,->]  &  & \\
& & & &  & &  \\
& & \bullet\ar[uu]\ar[uur,->] &  & \bullet\ar[uu]\ar[uul,->] &  & \\
 & \circled{$\bullet$}\ar[uuur,->] & & \circled{$\bullet$}\ar[uuul,->]\ar[uuur,->] & & \circled{$\bullet$}\ar[uuul,->] & \\
 & & & &  & &  \\
 & \bullet\ar[uu]\ar[uuur,->] & & \bullet\ar[uu]\ar[uuur,->]\ar[uuul,->] & & \bullet\ar[uu]\ar[uuul,->] & \\
 & & & &  & &  \\
 & & & &  & &  \\
 &  & \circled{$1$}\ar[uuuuur,->]\ar[uuuuul,->] &  & \circled{$1$}\ar[uuuuur,->]\ar[uuuuul,->] & & \\
\end{tikzcd}\]

Clearly each matrix in $X$, which contains two trivial matrices, is a Suszko reduced model of $\vdash$. Moreover, by applying Theorem \ref{th: caratterizzazione Plonka suszko ridotta}, one immediately checks that $\PL(X)\in\ModS(\vdash^{l})$.
%
\qed
\end{example}

\section*{Acknowledgments}

The first and the second author were both supported by the grant GBP202/12/G061 of the Czech Science Foundation. The first author acknowledges also the ERC grant: ``Philosophy of Pharmacology: Safety, Statistical Standards, and Evidence Amalgamation'', GA:639276. The second author was supported also by a Beatriz Galindo fellowship of the Ministry of Education and Vocational Training of the Government of Spain. We are grateful to an anonymous referee for his/her valuables comments and suggestions.
 
 

\end{document}